\documentclass{amsart}
\usepackage[margin=1.2in]{geometry}
\usepackage{amssymb}
\usepackage{amscd}
\usepackage[all]{xy}
\usepackage{bbm}
\usepackage{mathrsfs}
\usepackage{enumerate}
\usepackage{tikz}
\usepackage{stmaryrd}
\usepackage{etoolbox}
\usepackage{array}
\usepackage{verbatim}
\usepackage{hyperref}

\newcommand{\R}{\mathbb{R}}

\newcommand{\eps}{\varepsilon}

\newcommand{\del}{\nabla}

\newcommand{\lap}{\Delta}

\newcommand{\bd}{\partial}

\newcommand{\la}{\langle}
\newcommand{\ra}{\rangle}

\renewcommand{\div}{\operatorname{div}}

\newcommand{\grad}{\del}
\newcommand{\vol}{\operatorname{Vol}}

\newcommand{\area}{\operatorname{Area}}

\newcommand{\fillellipse}{\raisebox{0.2ex}{\scalebox{2.2}[1.4]{\textcolor{blue}{$\bullet$}}}}

\theoremstyle{plain}
\newtheorem{theorem}{Theorem}
 
\newtheorem{corollary}[theorem]{Corollary}
\newtheorem{prop}[theorem]{Proposition}
\newtheorem{lemma}[theorem]{Lemma}
\newtheorem{conj}[theorem]{Conjecture}
\newtheorem{claim}[theorem]{Claim}
\newtheorem{question}[theorem]{Question}

\theoremstyle{definition}
\newtheorem{defn}[theorem]{Definition}
\newtheorem{rem}[theorem]{Remark}

\title[Quantification of $C^0$ Convergence]{Quantification of $C^0$ Convergence in Dimension Three}

\author{Liam Mazurowski}
\address{Department of Mathematics, Lehigh University, Bethlehem, Pennsylvania, 18015, United States}
\email{lim624@lehigh.edu}

\author{Xuan Yao}
\address{Department of Mathematics, Princeton University, Princeton, NJ 08540}
\email{xy1216@princeton.edu}

\begin{document}

\begin{abstract}
    We address Gromov's Quantification of $C^0$ Convergence Conjecture in dimension three.  Let $B$ be the unit ball in $\R^3$.  Let $g$ and $g_0$ be smooth metrics on $B$. We prove there are constants $C$ and $\eps_0$ depending only on $g_0$ so that 
    \[
    \inf_{x\in B} R_g(x) \le R_{g_0}(0) + C \|g-g_0\|_{C^0}^{1/2}
    \]
    provided $\|g-g_0\|_{C^0}\le \eps_0$.  We also construct examples to show that the exponent $1/2$ is sharp.  This explicitly quantifies the fact that scalar curvature lower bounds are preserved under $C^0$ convergence of metrics. 
    When $g_0$ is merely $C^2$ we prove a related estimate with a slightly weaker rate, and when $g_0$ has rotational symmetry we prove a related estimate with a stronger linear rate. To prove these results, we use harmonic functions to define a local quantity that detects the scalar curvature.  Then we use classical elliptic PDE estimates to show that this quantity is stable under $C^0$ perturbations of the metric.  As a further application of this method, we give a partial answer to a question of Gromov on the preservation of scalar curvature lower bounds for metrics that are converging in measure.  
\end{abstract}

\maketitle

\section{Introduction}

A well-known result of Gromov states that scalar curvature lower bounds are preserved under the $C^0$ convergence of metrics \cite{gromov2014dirac,gromov2019four}. Gromov's proof exploits a connection between scalar curvature and the existence of mean-convex cubes with acute dihedral angles.   Later, Bamler \cite{bamler2016ricci} gave an alternative proof based on the Ricci flow, which was further extended by Burkhardt-Guim \cite{burkhardt2019pointwise}.  In this paper, we show how methods from elliptic PDE can be used to understand the preservation of scalar curvature lower bounds in dimension three.  Our main goal is to address the following conjecture of Gromov \cite[Section 3.1.3]{gromov2019four} which explicitly quantifies the preservation of scalar curvature lower bounds under $C^0$ convergence of metrics: 

\begin{conj}[Quantification of $C^0$ Convergence] Assume that $g$ and $g_0$ are smooth metrics on the unit ball $B$ in $\R^n$.  Then there are constants $C$ and $\eps_0$ depending only on $g_0$ so that 
\begin{equation}
\label{Equation:OriginalConjecture} 
\inf_{x\in B} R_g(x) \le R_{g_0}(0) + C \|g-g_0\|_{C^0(B)}
\end{equation} 
provided $\|g-g_0\|_{C^0(B)} \le \eps_0$. 
\end{conj}

In Section 2, we construct examples which show that estimate \eqref{Equation:OriginalConjecture} cannot hold for general metrics $g_0$.  In fact, there are even rotationally symmetric metrics $g_0$  which are conformal to the Euclidean metric for which estimate \eqref{Equation:OriginalConjecture} does not hold.  These examples imply that the best possible rate in an estimate of this form is 
\begin{equation}
\label{Equation:NewConjecture} 
\inf_{x\in B} R_g(x) \le R_{g_0}(0) + C \|g-g_0\|_{C^0(B)}^{1/2}. 
\end{equation} 
In this paper, we prove that the estimate \eqref{Equation:NewConjecture} is true in dimension three. 

\begin{theorem}
\label{Theorem:IntroSmooth}
Assume that $g$ and $g_0$ are smooth metrics on the unit ball $B$ in $\R^3$. Then there are constants $C$ and $\eps_0$ depending only on $g_0$ so that 
\[
\inf_{x\in B} R_g(x) \le R_{g_0}(0) + C \|g-g_0\|_{C^0(B)}^{1/2}
\]
provided $\|g-g_0\|_{C^0(B)}\le \eps_0$. 
\end{theorem}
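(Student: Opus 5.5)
The strategy is to build, from harmonic functions on small balls, a local quantity that recovers scalar curvature at the center as the radius shrinks and that moves only a little when the metric moves in $C^0$. Then I choose the radius to balance two errors. Concretely, fix a small radius $r$ and work in $g_0$-normal coordinates at $0$. Let $u_g$ be the $g$-harmonic function on the coordinate ball $B_r$ with boundary data $x_1$ (or a suitable linear combination of coordinates). Let $u_{g_0}$ be the analogous $g_0$-harmonic function.

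The first step is a quantity sensitive to $R_g$. For a harmonic $u$ in dimension three, Stern's Bochner-type identity (or, more directly, the Bochner formula integrated against a suitable weight) gives a relation of the form
\[
\int_{B_r} \phi\,\Big(\frac{|\nabla^2 u|^2}{|\nabla u|} + R_g |\nabla u|\Big) \le \text{boundary terms involving } u, \partial_\nu u,
\]
or, more robustly, an exact integral identity of the type
\[
\int_{B_r}\big(|\nabla^2 u|^2 + \mathrm{Ric}(\nabla u,\nabla u)\big)\,\phi=\text{boundary/weight terms}.
\]
Averaging over the choice of coordinate direction turns $\mathrm{Ric}$ into $\tfrac13 R$. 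This gives a quantity $Q_g(r)$, built from boundary and energy integrals of the $u_g^{(i)}$ with the Hessian term kept with its favorable sign. I aim to show two things. First, $Q_g(r)\ge c\,r^5 \inf_{B_r} R_g$ plus a Euclidean normalizing term. Second, for the smooth metric $g_0$, $Q_{g_0}(r)= c\,r^5 R_{g_0}(0)+O(r^6)$, by Taylor expanding $g_0$ in normal coordinates. That $Q_{g_0}$ has this exact expansion is the design constraint on $Q$. I would pick $Q$ to involve only quantities like Dirichlet energies $\int |\nabla u|^2\,dV$ and boundary integrals of $u$, which are variationally characterized.

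The second step is $C^0$ stability. Set $\eps=\|g-g_0\|_{C^0}$. Dirichlet energies are minima of $\int g^{ij}\partial_iu\partial_ju\sqrt{g}$, so they change by a relative factor $O(\eps)$, i.e. by $O(\eps r)$ in absolute terms on $B_r$. Any term in $Q$ that needs derivatives of $u_g$ beyond energy would be controlled by De Giorgi--Nash--Moser, Meyers $W^{1,p}$ or Cordes-type estimates, which use only $C^0$ control of the coefficients. This yields $|Q_g(r)-Q_{g_0}(r)|\le C\eps r^3$ after the correct normalization: the energy is of order $r^3$, and the relative error is $\eps$.

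The third step combines the two. We get
\[
c r^5\inf R_g \le Q_g \le Q_{g_0}+C\eps r^3 = c r^5 R_{g_0}(0) + O(r^6)+C\eps r^3,
\]
so $\inf R_g\le R_{g_0}(0)+C(r+\eps r^{-2})$. This balance gives only $\eps^{1/3}$, so it must be sharpened. To reach $\eps^{1/2}$, I would arrange $Q$ so that the $g_0$ expansion has error $O(r^7)$. The $r^6$ term comes from odd-order Taylor terms, which vanish after symmetrizing over $x\mapsto -x$ and averaging directions. Then $r^2+\eps r^{-2}$ is optimized at $r=\eps^{1/4}$, giving $\eps^{1/2}$, which matches the sharp examples of Section 2.

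The main obstacle is step one with the lower bound. The inequality $Q_g\ge c r^5\inf R_g$ must hold for an arbitrary smooth $g$ with no derivative control. So it has to come purely from a monotone, signed identity, Bochner with the Hessian term discarded, whose remaining terms are exactly the $C^0$-stable energy and boundary quantities. The boundary terms involving $\partial_\nu u_g$ are not $C^0$-stable. I would try to remove them by integrating against radial cutoff weights $\phi(|x|)$ and using the coarea and integration-by-parts structure, as in Stern's level set method. That way only bulk integrals of $|\nabla u|^2$ weighted by functions of $u$ and $|x|$ remain.
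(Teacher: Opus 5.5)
Your first step has a genuine gap, and it sits exactly where the argument needs its strength. Averaging Bochner over three harmonic functions gives $\sum_i \mathrm{Ric}_g(\nabla u^{(i)}_g,\nabla u^{(i)}_g)=\operatorname{tr}(\mathrm{Ric}_g\,G)$, where $G=\sum_i\nabla u^{(i)}_g\otimes\nabla u^{(i)}_g$. This equals $R_g$ only if $G=g^{-1}$. For an arbitrary $g$ that is merely $C^0$-close to $g_0$, Meyers only tells you that $G-g^{-1}$ is small in $L^p$, not pointwise. Nothing in the hypotheses controls the traceless part of $\mathrm{Ric}_g$. So $\operatorname{tr}\big(\mathrm{Ric}_g(G-g^{-1})\big)$ has neither a sign nor a bound, and no inequality $Q_g\ge c\,r^5\inf R_g$ can come out of this construction. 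Discarding $|\nabla^2u|^2$ does not help, because the obstruction is in the curvature term itself.

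Two further pieces also fail as stated:
\begin{itemize}
\item Weights $\phi(|x|)$ are not $C^0$-stable, since $\Delta_g\phi(|x|)$ contains the Christoffel symbols of $g$. Only weights that are functions of $u$ itself are safe, because $\Delta_g\phi(u)=\phi''(u)|\nabla u|^2$.
\item With boundary data $x_1$ on $B_r$, the level sets are discs meeting $\partial B_r$. Any Gauss--Bonnet argument then produces geodesic-curvature and $\partial_\nu u$ terms on $\partial B_r$, which are not $C^0$-stable.
\end{itemize}

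The missing idea is the one you mention and then abandon: Stern's rearrangement for a \emph{single} harmonic function. It trades $\mathrm{Ric}(\nu,\nu)$ for $\tfrac12(R-R_\Sigma)$ plus squares. Combined with Gauss--Bonnet on closed connected level sets, where $\int_\Sigma R_\Sigma\le 8\pi$, this gives a signed scalar-curvature term.

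The paper sets this up as follows:
\begin{itemize}
\item It takes $u_0=\Gamma-e(0)$ from the $g_0$-Green's function and works on the annulus $\Omega_a$, whose level sets are spheres. It lets $u$ be the $g$-harmonic function with $u=u_0$ on $\partial\Omega_a$.
\item It uses the Agostiniani--Mazzieri--Oronzio functional $\widetilde F$, augmented by $-\tfrac12\int R_{g_0}|\nabla u|/u^2$. Its derivative is $\tfrac12\int_{\Sigma_t}(R_g-R_{g_0})\,da$ plus nonnegative squares, and these squares are $O(t^4)$ when $g=g_0$.
\item Integrating $\widetilde F(t)/t^3$ over $t\in[a+s,2a+2s]$ and then over $s\in[0,a]$ turns all level-set terms into bulk integrals such as $\int|\nabla u|^3/u^3$ and $\int\psi(u)R_{g_0}|\nabla u|/u^2$. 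These are $C^0$-stable by Meyers with $p\ge 3$, after rescaling to the nearly Euclidean unit scale.
\item Because $\widetilde F(t)$ at a fixed $t$ is not stable, the argument uses two scales. First, $D\ge -C\eps$ yields some $t_1\in[a,4a]$ with $\widetilde F(t_1)\ge -C\eps a$. Monotonicity from $t_1$ to $8a$ then gains $c\,Pa^3$. Comparing with $D^1_0=O(a^4)$ at scale $8a$ gives $Pa^2\lesssim \eps+a^4$.
\end{itemize}

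Your symmetrization of the odd Taylor term and your choice $r=\eps^{1/4}$ do match the paper. There, the linear part of $R_{g_0}(x)-R_{g_0}(0)$ integrates to $O(a^5)$ over the nearly round annulus. But these steps only become usable once the monotone, $C^0$-stable quantity is built in this way.
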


It is also interesting to consider the case where $g_0$ is only $C^2$, since the statement of Gromov's $C^0$- convergence theorem \cite{gromov2014dirac,gromov2019four} only assumes $C^2$ regularity. In this case, we prove a related estimate with a slightly weaker
rate. Since the scalar curvature depends on the second derivatives of the metric, there is no hope to quantitatively control $R_{g_0}(x)$ by $R_{g_0}(0)$ assuming only $C^2$ regularity of the metric.
Thus we also need to replace  $R_{g_0}(0)$ by a global term in this estimate. 
\begin{theorem}
\label{Theorem:MainC2}
Assume that $g$ and $g_0$ are $C^2$ metrics on the unit ball $B$ in $\R^3$. Fix any positive number $q < \frac 1 2$.  Then there are constants $C$ and $\eps_0$ depending only on $g_0$ and $q$ so that 
\[
\inf_{x\in B} \big(R_g(x) - R_{g_0}(x)\big) \le C\|g-g_0\|_{C^0(B)}^q
\]
provided $\|g-g_0\|_{C^0(B)} \le \eps_0$. 
\end{theorem}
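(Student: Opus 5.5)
The plan is to build, from harmonic functions, a local quantity that detects scalar curvature in an integrated form, and then to show this quantity is stable under $C^0$ perturbations. The identity we will use is Stern's level-set formula for a $g$-harmonic function $u$ on a domain $\Omega\subset B$:
\[
\int_\Omega \Big(\frac{|\nabla^2 u|^2}{|\nabla u|}+R_g|\nabla u|\Big)\,dV_g \;=\; \int_{\bd\Omega}\big(2\,\bd_\nu|\nabla u| - \ldots\big)\,dA_g + 4\pi\int \chi(\Sigma_t)\,dt ,
\]
or, more precisely, a localized variant of it. Here $\Sigma_t$ denotes the level sets of $u$.

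Fix a point $x_0$ and a small scale $r$, and let $B_r=B_r(x_0)$. We compare $g$-harmonic and $g_0$-harmonic functions $u,u_0$ on $B_r$ having the same boundary data, chosen close to a coordinate function $x^1$. Stern's identity is applied on $B_r$ to both. Since level sets of functions close to $x^1$ on a small ball are discs, the Euler characteristic term is a constant, the same for $g$ and $g_0$. Write $\delta=\|g-g_0\|_{C^0}$ and suppose $R_g\ge R_{g_0}+\Lambda$ on $B$. With $|\nabla u|\approx 1$, the identity for $g$ minus the identity for $g_0$ then gives
\[
\Lambda\, \vol(B_r)\lesssim \Big(\text{boundary terms for }g\Big)-\Big(\text{boundary terms for }g_0\Big)+\Big(\text{Hessian terms}\Big)+\sup_{B_r}|R_{g_0}-R_{g_0}(x)|\cdot r^3 .
\]
The last error is controlled by a modulus of continuity of $R_{g_0}$. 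This is why the statement has $R_{g_0}(x)$ inside the infimum rather than $R_{g_0}(0)$: for a $C^2$ metric we only know $R_{g_0}$ is uniformly continuous, with no quantitative modulus. To avoid needing one, we test against a pointwise difference rather than the value at the center.

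The key stability step is to use elliptic estimates to control $u-u_0$. For a merely measurable (or $C^0$-close) coefficient perturbation, $W^{1,p}$ estimates of Meyers/Calderón–Zygmund type give $\|\nabla u-\nabla u_0\|_{L^p(B_r)}\lesssim \delta\, r^{3/p}$ for large $p$, with constants depending on $p$. Since $g_0\in C^2$ only, $u_0$ is $C^{2,\alpha}$ for every $\alpha<1$ but not better. The boundary terms, which involve $\bd_\nu|\nabla u|$ and mean curvature, must be handled by averaging the identity over radii $r\in[r_0,2r_0]$. This converts boundary integrals into bulk integrals of first derivatives of $u$ against cutoff derivatives, which costs a factor $r_0^{-1}$. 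The Hessian term $|\nabla^2u|^2/|\nabla u|$ is nonnegative on the left, so it only helps for $g$; for $g_0$ it is bounded by the $C^2$ control of $u_0$.

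Collecting terms, after dividing by $r_0^3$ we expect an estimate of the form $\Lambda\lesssim \delta r_0^{-2}+ r_0^{\,\beta}$, with $\beta<1$ arising from the $C^{1,\alpha}$/$W^{2,p}$ loss. Optimizing $r_0=\delta^{1/(2+\beta)}$ gives a rate close to $1/2$ in the smooth case. Here the loss from $\alpha,p<\infty$ produces the exponent $q<1/2$, with constants depending on $q$ through $p$ and $\alpha$.

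The main obstacle is the integration by parts that moves every term into a form where only $\nabla u-\nabla u_0$ (and not second derivatives of $u$) is compared. The second derivatives of $u$ are not controlled by $\delta$, since $g$ is only $C^0$-close to $g_0$. I would first try to rewrite $\int R|\nabla u|\phi$ using the Bochner formula in divergence form, $\div(\nabla|\nabla u|)$ and similar expressions. The Hessian of $u$ would then appear only with a favorable sign or inside divergences tested against a smooth cutoff $\phi$ whose derivatives cost powers of $r_0^{-1}$. Singularities where $\nabla u=0$ are handled by using $\sqrt{|\nabla u|^2+\epsilon}$ and letting $\epsilon\to 0$; for $r_0$ small, closeness to $x^1$ makes $|\nabla u|$ bounded below anyway.
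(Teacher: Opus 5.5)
There is a genuine gap, and it sits exactly at the step you call the main obstacle. Since $g-g_0$ is small only in $C^0$, its first derivatives can be arbitrarily large. Meyers' estimate controls only $\nabla u-\nabla u_0$ in $L^p$. So every quantity you compare must be a bulk integral of $u$, $\nabla u$ and $g$, with \emph{no} derivatives of $g$ and no $\nabla^2 u$.

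Localizing Stern's identity to a Euclidean ball with $u\approx x^1$ cannot achieve this. The boundary term $\int_{\partial B_r}\partial_\nu|\nabla u|$ contains $\nabla^2u$. Averaging over radii turns it into $\int\langle\nabla|\nabla u|,\nabla\phi\rangle\,dv_g$, and a further integration by parts moves the derivative onto $\Delta_g\phi=g^{ij}\partial_{ij}\phi-g^{ij}\Gamma^k_{ij}\partial_k\phi$, or equivalently onto $\partial_j(\sqrt{|g|}g^{ij})$. This involves the Christoffel symbols of $g$, so derivatives of a Euclidean cutoff do not merely cost powers of $r_0^{-1}$.

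Likewise, the topological term is not a constant common to $g$ and $g_0$. For disc level sets, Gauss--Bonnet gives $\int_{\Sigma_t}K\,da=2\pi-\int_{\partial\Sigma_t}\kappa$, and the geodesic curvature of $\Sigma_t\cap\partial B_r$ involves $\nabla^2u$ and $\partial g$. For any weight $\phi$ that is not constant on level sets, $\int_{\Sigma_t}\phi K\,da$ is not topological at all.

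Your bookkeeping is also inconsistent. $\Lambda\lesssim\delta r_0^{-2}+r_0^{\beta}$ with $\beta<1$ optimizes to $\delta^{\beta/(2+\beta)}$, which is worse than $\delta^{1/3}$. A rate near $1/2$ requires the reference-metric error to be $O(r_0^{2-})$.

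The missing idea is to make every weight a function of $u$ itself, on a region foliated by \emph{closed} level sets of $u$.
The paper works in coordinates with $(g_0)_{ij}(0)=\delta_{ij}$ and $\partial_k(g_0)_{ij}(0)=0$.
\begin{itemize}
\item It takes $u_0=\Gamma-e(0)$, where $\Gamma$ is the $g_0$-Green's function.
\item It takes $u$ to be the $g$-harmonic function on the annulus $\{\frac{1}{75a}\le u_0\le\frac{75}{a}\}$ with $u=u_0$ on the boundary, so the level sets are spheres.
\end{itemize}
Three things then hold.
\begin{itemize}
\item Gauss--Bonnet is exact.
\item $\Delta_g\Psi(u)=\Psi''(u)|\nabla u|^2$ contains no derivatives of $g$.
\item In the Agostiniani--Mazzieri--Oronzio functional $\widetilde F(t)$, corrected by the bulk term $-\frac12\int R_{g_0}|\nabla u|/u^2$, the only second-order term is $\int_{\{u=1/t\}}H|\nabla u|\,da$. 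This is an exact $t$-derivative of $\int_{\{u=1/t\}}|\nabla u|^2\,da$.
\end{itemize}
Averaging $\widetilde F(t)/t^3$ over $t\in[a+s,2a+2s]$ and then over $s\in[0,a]$ produces a quantity $D$. It consists only of bulk integrals of $|\nabla u|^3/u^3$ and $\psi(u,a)R_{g_0}|\nabla u|/u^2$ over regions $\{c\le u\le d\}$. After rescaling, Meyers with $p\ge3$ and the coarea formula give $|D-D_0|\le C\eps$ uniformly in $a$.

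Write $P=\inf_B(R_g-R_{g_0})>0$. The paper combines $D_0\ge0$, the monotonicity $\widetilde F'\ge\frac P2|\Sigma_t|$, and a repetition at scale $8a$ to get $Pa^2\lesssim\eps+a^{4-2\tau}$. The last term comes from $\widetilde F_0(t)=O(t^{5-2\tau})$, via the $C^2$ Green's function expansion. Choosing $a=\eps^{1/4}$ gives the theorem.
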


Finally, when $g_0$ has rotational symmetry, we prove an estimate with a linear rate as in the original conjecture \eqref{Equation:OriginalConjecture}.  Again, by the example in Section \ref{Section:Example}, one must use a global term rather than $R_{g_0}(0)$ in any estimate of this form. 

\begin{theorem}
\label{Theorem:MainSymmetry}
Assume that $g_0$ is a smooth metric on the unit ball $B$ in $\R^3$ with rotational symmetry.  Let $g$ be another smooth metric on $B$. Then there are constants $C$ and $\eps_0$ depending only on $g_0$ so that 
\[
\inf_{x\in B} \big(R_g(x) - R_{g_0}(x)\big) \le C \|g-g_0\|_{C^0(B)}
\]
provided $\|g-g_0\|_{C^0(B)}\le \eps_0$. 
\end{theorem}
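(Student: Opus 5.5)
We argue by harmonic functions, as the abstract indicates. The plan is to build a local quantity $Q_g(x,r)$ from $g$-harmonic functions on small balls around $x$ that detects $R_g$. We then show $Q$ is Lipschitz in $g$ with respect to $C^0$ perturbations, and use rotational symmetry of $g_0$ to make the comparison at the level of $g_0$ exact with no $r$-dependent error.

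\textbf{Step 1: the detecting quantity.} In dimension three, a harmonic function $u$ with $|\nabla u|\neq 0$ satisfies Stern's identity along level sets $\Sigma_t$:
\[
\Delta |\nabla u| = \frac{1}{2|\nabla u|}\Big(|\nabla^2 u|^2 + \mathrm{Ric}(\nabla u,\nabla u)\cdot 2\Big)-\dots,
\]
which integrates to a monotone or integral formula. Roughly,
\[
\int_{\Omega} \frac{1}{2}\Big(\frac{|\nabla^2u|^2}{|\nabla u|}+R_g|\nabla u|\Big) \le 2\pi\!\int \chi(\Sigma_t)\,dt + \text{boundary terms}.
\]
On a coordinate ball $B_r(x)$, take $u$ to be the $g$-harmonic function with boundary values of a linear function $x_1$ (or of the $g_0$-radial analogue). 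The integral identity then expresses $\int R_g|\nabla u|$ as boundary flux terms minus a nonnegative Hessian term. It follows that $\inf_{B_r} R_g \cdot \int|\nabla u| \le \text{(boundary terms)}(g)$.

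\textbf{Step 2: stability.} The boundary terms involve only $u$ and $\nabla u$ on $\partial B_r$ and an Euler characteristic term. We replace them by their averages over a family of balls $r\in[r_1,r_2]$ (coarea in $r$), turning them into bulk integrals of $|\nabla u|$ and $u$. These depend on $g$ only through $u$ in $W^{1,p}$. Standard $L^p$ and Meyers-type estimates for divergence-form equations with coefficients $g^{ij}\sqrt{g}$ give $\|u_g-u_{g_0}\|_{W^{1,2}}\le C\|g-g_0\|_{C^0}$. This Lipschitz dependence is the source of the linear rate.

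\textbf{Step 3: exact evaluation for $g_0$ via symmetry.} This is the step where symmetry matters and where the general case loses the square root. For general $g_0$, the Hessian term for $u_{g_0}$ is only $O(r^2)$ small, forcing a choice $r\sim\eps^{1/2}$. With rotational symmetry we avoid this by taking $u$ to be a \emph{radial} $g_0$-harmonic function on an annulus $A=\{r_1<|x|<r_2\}$, i.e.\ a function of the distance only. Its level sets are the symmetric spheres, the Stern inequality for $g_0$ is an equality up to an explicitly computable umbilic term, and $\int_A R_{g_0}|\nabla u|$ is captured exactly. We then run Step 1 for $g$ with boundary data matching $u_{g_0}$ on $\partial A$. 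Subtracting the two identities gives
\[
\int_A (R_g - R_{g_0})|\nabla u_g| \le C\|g-g_0\|_{C^0},
\]
after we check that the difference of the Hessian/traceless-second-fundamental-form terms has the right sign or is controlled linearly. On spheres, the $g_0$ term is the minimal one given the flux, by a Cauchy--Schwarz argument. Since $\int_A|\nabla u_g|$ is bounded below, we conclude $\inf_A(R_g-R_{g_0})\le C\|g-g_0\|_{C^0}$.

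\textbf{Main obstacle.} The delicate points are twofold. First, we must ensure the Euler characteristic term for the level sets of $u_g$ does not exceed that for $u_{g_0}$; since $\chi(\Sigma)\le 2$ and the symmetric spheres have $\chi=2$, this works in our favor. Second, we must verify that the comparison of $|\nabla^2 u|^2/|\nabla u|$ and mean-curvature boundary terms requires only $W^{1,2}$-closeness rather than $C^1$-closeness. For the second point, we would first try to write all $g$-dependent boundary terms in divergence form against test functions depending only on $g_0$.
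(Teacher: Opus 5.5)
The gap is the step you defer in Step 3 (``after we check that the difference of the Hessian/traceless-second-fundamental-form terms has the right sign or is controlled linearly''). That step is the whole difficulty, and nothing in the proposal resolves it.

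In Stern's identity, both the bulk term $\int |\nabla^2 u|^2/|\nabla u|$ and the boundary term $\int \partial_\nu|\nabla u|$ (which on a level set is $\pm\int_\Sigma H|\nabla u|$) are second order in $u_g$. But $C^0$-closeness of the metrics only gives $W^{1,p}$-closeness of $u_g$ to $u_0$. So the claim in Step 2 that the boundary terms involve only $u$ and $\nabla u$ is false, and the Hessian terms of $u_g$ and $u_0$ cannot be compared directly. Nor is the Hessian term small for the model: for $u=|x|^{-1}$ in $\R^3$ it equals $6|x|^{-4}$.

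Averaging over coordinate spheres $\partial B_r$ does not help. These spheres are not level sets of $u_g$, so Gauss--Bonnet on the cut level sets acquires geodesic-curvature terms, and integrating $\partial_\nu|\nabla u|$ by parts in $r$ produces $\Delta_g r$. Both depend on $\partial g$. The Cauchy--Schwarz bound $\int_\Sigma H^2|\nabla u| \ge (\int_\Sigma H|\nabla u|)^2/\int_\Sigma |\nabla u|$ also falls short: it leaves an expression quadratic in the second-order quantity $\int_\Sigma H|\nabla u|$, which is no more stable and cannot be averaged away.

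The missing idea is to linearize around the model, so that every second-order quantity appears only as a $t$-derivative of a first-order one. On level sets, $|\nabla^2u|^2/|\nabla u|^2 = |\mathring A|^2 + \tfrac32 H^2 + 2|\nabla^\Sigma|\nabla u||^2/|\nabla u|^2$. Drop the first and last terms, which vanish for the radial model. Then complete the square around the model profile $H = f(u_0)|\nabla u_0|$: $\tfrac34 H^2 = \tfrac34(H-f(u)|\nabla u|)^2 + \tfrac32 f(u)H|\nabla u| - \tfrac34 f(u)^2|\nabla u|^2$. The leftover terms are linear in $\int_\Sigma H|\nabla u|$ and $\int_\Sigma|\nabla u|^2$, and $\frac{d}{dt}\int_{\{u=b(t)\}}|\nabla u|^2\,da = b'(t)\int_{\{u=b(t)\}}H|\nabla u|\,da$.

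This is the paper's modified Agostiniani--Mazzieri--Oronzio functional $\overline F$, with $c_1=-1/b'$, $c_2=(c_1'+\tfrac32 f(b))/b'$ and $c_3=c_2'-\tfrac34 f(b)^2$. It satisfies $\overline F'\ge\tfrac12\int R_g + c_3\int|\nabla u|^2$, with equality for $(g_0,u_0)$; your observation $\chi(\Sigma)\le 2$ enters here. Subtracting the corresponding bulk integrals gives $\widetilde F_0\equiv 0$ and $\widetilde F'\ge \tfrac12\int_{\Sigma}(R_g-R_{g_0})$.

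Averaging $\widetilde F$ twice in $t$ over level sets of $u$ then turns it into bulk integrals $\int \Psi(u)|\nabla u|^3$ and $\int\Psi(u) R_{g_0}|\nabla u|$. These are $O(\eps)$-stable by Proposition~\ref{Proposition:IntegralBound}. That step needs $W^{1,p}$-closeness with $p>3$ (to get $\|u-u_0\|_{L^\infty}\le C\eps$ and $L^4$ gradient bounds), not only the $W^{1,2}$ estimate you state.

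Finally, only averages of $\widetilde F$ are controlled, so one subtraction on a single annulus cannot conclude. You need two disjoint windows of levels:
\begin{itemize}
\item the average at scale $a$ gives a point $t_1$ with $\widetilde F(t_1)\ge -C\eps$;
\item monotonicity then gives $\widetilde F\ge cP - C\eps$ on $[8a,16a]$;
\item the average at scale $8a$ is at most $C\eps$, which forces $P\le C\eps$.
\end{itemize}
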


To prove these results, we develop a method for using elliptic PDE to study the preservation of scalar curvature lower bounds in dimension three.   
As a further application of this technique, we can give a partial answer to a question of Gromov \cite[Section 3.1.3]{gromov2019four} about the preservation of scalar curvature lower bounds for metrics that are converging in measure. 

\begin{question}
Assume that smooth Riemannian metrics $g_k$ converge in measure to a smooth Riemannian metric $g_0$.  If all the metrics $g_k$ have non-negative scalar curvature, does this imply that $g_0$ has non-negative scalar curvature? 
\end{question}

Gromov suggests that this is most likely true if one assumes in addition that the Lipschitz distance between $g_k$ and $g_0$ remains bounded by a constant $K$ independent of $k$.  
In dimension three, we show that the answer is indeed yes  if the Lipschitz distance between $g_k$ and $g_0$ is less than a small universal constant. 

\begin{theorem}
\label{Theorem:IntroMeasure}
    There is a universal constant $\eps_0 > 0$ with the following property. Let $M^3$ be a smooth, closed three manifold. Assume that $g_k$ and $g_0$ are smooth Riemannian metrics on $M$ and that $g_k$ converges to $g_0$ in measure. Further assume that the identity map $(M,g_k)\to (M,g_0)$ is $(1+\eps_0)$-bi-Lipschitz for every $k$. If all the metrics $g_k$ have non-negative scalar curvature, then $g_0$ also has non-negative scalar curvature. 
\end{theorem}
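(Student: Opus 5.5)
The approach is to find a local, integral quantity built from harmonic functions that satisfies a definite sign inequality whenever $R_g\ge 0$, and that is continuous under measure convergence once the metrics are uniformly $(1+\eps_0)$-bi-Lipschitz. Then one checks that this inequality fails at small scales around any point where $R_{g_0}<0$. We argue by contradiction: suppose $R_{g_0}(p)<0$ for some $p\in M$. Choose $g_0$-normal coordinates on a small ball $U\ni p$. By the bi-Lipschitz hypothesis, the coefficient matrices $\sqrt{\det g_k}\,g_k^{ij}$ are within $C\eps_0$ of the identity on $U$, uniformly in $k$. In addition, $g_k\to g_0$ in measure with uniformly bounded entries, so $g_k\to g_0$ in every $L^p(U)$, $p<\infty$.

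\emph{Step 1 (uniform estimates and convergence of harmonic functions).} On a small coordinate ball $B_r\subset U$, let $u_k$ be $g_k$-harmonic and $u_0$ be $g_0$-harmonic, all with boundary values $x_1$. The Laplace--Beltrami equation is a divergence-form equation with measurable coefficients that are $C\eps_0$-close to $\delta^{ij}$. By Meyers' estimate in its perturbative form, for $\eps_0$ small enough (universally), $\nabla u_k$ is bounded in $L^{p}$ for a large fixed $p$ (any prescribed $p$ can be reached by shrinking $\eps_0$). Combined with $L^p$ convergence of the coefficients, the standard argument (subtract the equations and test with $u_k-u_0$) gives $\nabla u_k\to\nabla u_0$ strongly in $L^{p'}$ for every $p'<p$. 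This step is where the universal smallness of $\eps_0$ enters, and it needs no regularity of $g_k$ beyond the bi-Lipschitz bound.

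\emph{Step 2 (a weak first-order consequence of $R\ge0$).} For smooth $g$ with $R_g\ge 0$ and $g$-harmonic $u$, I would use Stern's level-set identity. This identity comes from Bochner's formula together with the Gauss equation on the regular level sets $\Sigma_t=\{u=t\}$. It is
\[
\Delta_g|\nabla u|=\frac{1}{2|\nabla u|}\Big(|\nabla^2u|^2-|\nabla|\nabla u||^2+R_g|\nabla u|^2-R_{\Sigma_t}|\nabla u|^2\Big)
\]
away from critical points. Multiply by a test function $\varphi\ge0$ supported in $B_r$, drop the nonnegative Hessian term, and handle $R_{\Sigma_t}=2K_{\Sigma_t}$ by the coarea formula and Gauss--Bonnet on each level set. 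One should obtain an inequality of the schematic form
\[
\int_{B_r}|\nabla u|\,\Delta_g\varphi\,dV_g\;\ge\;-\,C\!\int\!\Big(\int_{\Sigma_t}\varphi K_{\Sigma_t}\Big)dt + \frac12\int_{B_r}\varphi R_g|\nabla u|\,dV_g .
\]
The left-hand side involves only $|\nabla u|$ and the metric, with no second derivatives. To keep the right-hand side first-order, I would choose $\varphi$ to be a function of $u$ and of a distance-like coordinate, so that the Gauss--Bonnet boundary terms are also expressible through $\nabla u$. Alternatively, I would restrict to the regime where, by Step 1 and smallness, the level sets are disks or have controlled topology, so the Euler characteristic term has a sign.

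\emph{Step 3 (passing to the limit and contradiction).} Using Step 1, the first-order inequality for $(g_k,u_k)$ with $R_{g_k}\ge0$ passes to the limit and gives the same inequality (with zero curvature term) for $(g_0,u_0)$. On the other hand, $g_0$ is smooth, so $u_0$ is a smooth perturbation of $x_1$. Expanding the quantity in $r$, it equals its Euclidean value plus $c\,R_{g_0}(p)\,r^{\alpha}$ plus higher-order terms, with $c>0$ explicit. Negativity of $R_{g_0}(p)$ then violates the limiting inequality for small $r$.

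I expect the main obstacle to be Step 2. One must arrange the test function and the treatment of the level-set Gauss curvature so that, first, the resulting inequality is genuinely first order and hence stable under the merely $L^{p'}$ convergence of $\nabla u_k$. Second, one must control the topology of the level sets of the non-smooth-limit harmonic functions $u_k$ uniformly in $k$. The Hessian term is discarded, and Steps 1 and 3 are comparatively routine.
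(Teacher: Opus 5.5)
Your Step 1 matches the paper's Meyers argument, and the comparison logic of Step 3 would be fine. However, there is a genuine gap at Step 2. You flag it without resolving it, and with the setup you chose it cannot be completed as sketched.

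\textbf{Why Step 2 fails in your setup.} To pass to the limit, the quantity must be first order in $u_k$ \emph{and} contain no derivatives of $g_k$. Those derivatives are completely uncontrolled under convergence in measure with only a bi-Lipschitz bound.
\begin{itemize}
\item Your left-hand side is $\int|\nabla u|\,\Delta_g\varphi\,dV_g=\int|\nabla u|_g\,\partial_j(\sqrt{|g|}\,g^{ij}\partial_i\varphi)\,dx$. For a general $\varphi$, including one built from a distance-like coordinate, this contains $\partial g_k$. It is admissible only for $\varphi=\varphi(u)$, where harmonicity gives $\Delta_g\varphi(u)=\varphi''(u)|\nabla u|^2$.
\item Likewise, Gauss--Bonnet controls $\int_{\Sigma_t}\varphi K_{\Sigma_t}\,da$ only when $\varphi$ is constant on $\Sigma_t$.
\item With boundary data $x_1$ on a ball, the level sets are disks meeting $\partial B_r$, so $\varphi(u)$ is not compactly supported. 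Integrating by parts produces $\int_{\partial B_r}\varphi\,\partial_\nu|\nabla u|$, and Gauss--Bonnet produces $\int_{\partial\Sigma_t}k_g$. Both are second order in $u_k$ at $\partial B_r$ and are not stable under $W^{1,p}$ convergence. Averaging over the radius does not obviously help, since integrating by parts again brings in $\Delta_{g_k}$ of the radial coordinate.
\item These boundary terms also cannot be crudely bounded. Since $\int_{\Sigma_t}K\,da=O(r^2)$, we have $\int_{\partial\Sigma_t}k_g=2\pi-O(r^2)$. That error is exactly the size of the signal $\frac12\int_{\Sigma_t}R_{g_0}\,da\sim R_{g_0}(p)\,r^2$.
\item Topology is not the obstruction: by the maximum principle each regular level set is connected with one boundary circle.
\end{itemize}

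\textbf{Closed level sets alone do not fix it.} If you switch to harmonic functions with closed level sets so that $\varphi(u)$ is admissible, then $u$ is modeled on $|x|^{-1}$ rather than $x_1$. The Hessian term you discard, $|\nabla^2u|^2/(2|\nabla u|)\approx 3|x|^{-4}$, then swamps the curvature term $\frac12R|\nabla u|\approx\frac12R|x|^{-2}$. The resulting inequality has far too much slack to detect the sign of $R_{g_0}(p)$. Separately, Stern's identity has $|\nabla^2u|^2$ alone in the bracket. Your version with $-|\nabla|\nabla u||^2$ already fails for $u=|x|^{-1}$, although the inequality after dropping is the same.

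\textbf{The missing idea is the paper's construction, which resolves exactly this tension.}
\begin{itemize}
\item Take $u_0=\Gamma-e(0)$ from the Green's function of $g_0$ in normal coordinates. Let $u_k$ be $g_k$-harmonic on the annulus $\Omega_a=\{\frac{1}{75a}\le u_0\le\frac{75}{a}\}$ with $u_k=u_0$ on $\partial\Omega_a$. Its level sets are closed and connected, so Gauss--Bonnet gives $\int_\Sigma R_\Sigma\le 8\pi$ with no boundary term.
\item In place of Stern's inequality, use the sharp Agostiniani--Mazzieri--Oronzio functional, corrected by the bulk term $-\frac12\int R_{g_0}|\nabla u|/u^2$. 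Its derivative keeps the Hessian information in $|\mathring A|^2$ and $(2|\nabla u|/u-H)^2$. Hence the Euclidean Green's function is the equality case and $\widetilde F_0'=O(t^4)$, while $\widetilde F_k'\ge\frac P2|\Sigma_t|$ when $R_{g_0}\le -P<0$.
\item The functional's only second-order term, $\int_{\Sigma_t}H|\nabla u|$, is by harmonicity a $t$-derivative of $\int_{\Sigma_t}|\nabla u|^2$. Integrating $\widetilde F(t)/t^3$ twice in $t$ therefore turns everything into bulk integrals of $|\nabla u|^3/u^3$ and $|\nabla u|/u^2$. These do converge under the $W^{1,4}$ convergence supplied by Meyers.
\item Only these averages converge, not $\widetilde F_k(t)$ itself, so the contradiction comes from a two-scale argument. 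At scale $a$, $D_k\to D_0=O(a^4)$ gives $\widetilde F_k(4a)\ge -Ca^5$. Monotonicity then yields $D^1_k\ge Ca^2$ at scale $8a$, contradicting $D^1_k\to D^1_0=O(a^4)$ for small $a$.
\end{itemize}
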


\begin{rem}
    For metrics $g_k$ and $g_0$ that are uniformly bi-Lipschitz, the assumption that $g_k\to g_0$ in measure is equivalent to $g_k\to g$ in $L^p$ for $p < \infty$. 
\end{rem}

\subsection{Discussion} We now give some further discussion of these results, the techniques used to prove them, and  how they are situated within the broader context of geometric rigidity and stability phenomenon. 

\subsubsection{Rigidity and Stability} There are a wide range of rigidity results in the study of scalar curvature, including the Geroch conjecture \cite{gromov1980spin,schoen1979existence}, the positive mass theorem \cite{schoen1979proof}, the Penrose inequality \cite{bray2001proof, huisken2001inverse}, the mass-capacity inequalities \cite{bray2001proof,mazurowski2025monotone,xia2024new,xiao2016p}, Llarull's theorem \cite{llarull1998sharp}, and the characterization of round spheres  by their width \cite{marques2012rigidity}.   Gromov's result on the preservation of scalar curvature lower bounds is also closely connected with a certain rigidity theorem: a cube with non-negative scalar curvature, mean convex faces, and acute dihedral angles must be flat.  See \cite{brendle2024scalar,brendle2025gromov,gromov2014dirac, li2020polyhedron,li2024dihedral,wang2021gromov}  for proofs as well as generalizations to other polytopes. In \cite{gromov2019four}, Gromov provided an alternative proof using the $\fillellipse$-inequality, which is a comparison-rigidity result of scalar curvature for convex domains in $\mathbb R^n$. See \cite{bar2025rigidity,ko2026capillary} for proofs.

There are two well-established methods for proving rigidity results like these.  The first uses minimal surfaces and the stability inequality, and the second uses spin geometry and the Weitzenb\"ock formula for the Dirac operator.   More recently, Stern \cite{stern2022scalar} used the Schoen-Yau rearrangement trick in combination with the Bochner formula to show that there is also a close connection between scalar curvature and harmonic functions in dimension three.  See \cite{agostiniani2024green, bray2022harmonic,mazurowski2023yamabe, miao2025mass,munteanu2021comparison} for related monotonicity formulas. 

It is very natural to study the stability of these geometric rigidity results. Given the closeness of certain geometric quantities, one expects to obtain certain closeness of the metrics.  Because the level sets of a harmonic function spread throughout an entire manifold, harmonic functions seem particularly well-suited to proving geometric stability results of this form.  For example, harmonic functions played a key role in Dong and Song's \cite{dong2025stability} proof that an asymptotically flat manifold with non-negative scalar curvature and small mass must be Gromov-Hausdorff close to Euclidean space outside a set of small volume. See \cite{allen2025stability,Dong_2025, mazurowski2024stability}  for some further stability results  proven using harmonic functions.   

Gromov's Quantification of $C^0$ Convergence conjecture can also be viewed as a kind of stability result but in the opposite direction of those above: one assumes a certain closeness of the metrics and expects to obtain the closeness of their geometric quantities. Given this perspective, it is therefore natural to consider a harmonic function approach to this conjecture in dimension three.

\subsubsection{Stability of Harmonic Functions} 

For this approach to succeed, it is important to understand how harmonic functions change under $C^0$ perturbations of the metric.   Given a domain $\Omega$ in $\R^3$ and a fixed function $f$ on $\bd \Omega$, consider the map $g\mapsto u_g$ where $u_g$ is the solution to 
\[
\begin{cases}
\lap_g u_g = 0, &\text{in } \Omega\\
u_g = f, &\text{on } \bd \Omega. 
\end{cases}
\]
This differential equation can be rewritten as a uniformly elliptic PDE with bounded, measurable coefficients on the  Euclidean domain $\Omega$.  A classical theorem of Meyers \cite{meyers1963p} implies that uniformly elliptic equations of the form 
\[
L w := \div(A(x) \grad w) = \div(X) 
\]
posses an interior $L^p$ gradient estimate for some $p > 2$. Moreover, the constant in this estimate depends only on the ellipticity constants of $A$. This is sufficient to show that $g\mapsto u_g$ is continuous from $C^0$ into $W^{1,p}_{\text{loc}}$ for some $p > 2$. Moreover, it gives an explicit linear modulus of continuity. 

For our argument, it will be essential to have $p > 3$.  Fortunately, such an improvement is possible.  Indeed, given a metric $g$ on the unit ball $B$, one can always compose with a diffeomorphism so that $g$ is in geodesic normal coordinates at the origin.  Shrinking to a smaller ball, $g$ will then be close to the Euclidean metric in $C^0$.  Moreover, any $C^0$ perturbation of $g$ will also be close to the Euclidean metric in this ball.  The Laplace-Beltrami operators  will thus be small perturbations of the Euclidean Laplacian, and this allows for improved estimates.  Indeed, one can then obtain an interior $L^p$ gradient estimate with $p$ as large as one likes, and this implies the required continuity. 

Similar points occur in Gromov's original proof of the $C^0$ convergence conjecture and in Bamler's Ricci flow proof.  Gromov's argument \cite{gromov2019four} to construct cubes detecting the scalar curvature depends on zooming in to a very small scale where the metric is nearly Euclidean.   In Bamler's argument \cite{bamler2016ricci}, the first step is to construct a manifold which is a small, compactly supported perturbation of Euclidean space. The Ricci flow equation for this manifold can then be viewed as a perturbation of the Ricci flow equation in Euclidean space, and this allows one to employ the estimates of Koch and Lamm \cite{koch2012geometric}.

\subsubsection{Stable Quantities from Harmonic Functions} Given the stability of the harmonic functions themselves, one must then find a stable relationship between these harmonic functions and the scalar curvature. We now sketch the construction. The precise technical details are slightly different for each of the main theorems. 

Fix a reference metric $g_0$ on $B$.  Let $u_0$ be a Green's function for $\lap_{g_0}$ with a pole at the origin. 
We fix a small scale $a > 0$ and consider the quantity 
\begin{align*}
D_0 &= C(a,u_0,g_0) + \frac 1 2 \int_{\{\frac{1}{4a} \le u_0\le \frac 1{2a}\}} \frac{\vert \grad u_0\vert^3}{u_0^3}\,dv_0 - \int_{\{\frac{1}{2a} \le u_0\le \frac 1{a}\}} \frac{\vert \grad u_0\vert^3}{u_0^3}\,dv_0\\
&\qquad - \int_{\{\frac{1}{4a} \le u_0\le \frac 1 a\}} \psi(u_0,a) f(x) \frac{\vert \grad u_0\vert}{u_0^2}\, dv_0.
\end{align*}
Here $\psi$ is a specific continuous function (see Section \ref{Section:C2} for the exact formula), and $f(x) = R_{g_0}(x)$ is a fixed continuous function, and $C(a,u_0,g_0)$ is a constant whose precise value is unimportant. 
The three bulk integral terms depend continuously on the harmonic function in $W^{1,p}$ (for $p > 3$) and the metric in $C^0$. 

Then given a metric $g$ which is $C^0$ close to $g_0$, we solve a boundary value problem at scale $a$ to define a $g$-harmonic function $u$. We define a related quantity
\begin{align*}
D &= C(a,u_0,g_0) + \frac 1 2 \int_{\{\frac{1}{4a} \le u\le \frac 1{2a}\}} \frac{\vert \grad u\vert^3}{u^3}\,dv - \int_{\{\frac{1}{2a} \le u\le \frac 1{a}\}} \frac{\vert \grad u\vert^3}{u^3}\,dv\\
&\qquad - \int_{\{\frac{1}{4a} \le u\le \frac 1 a\}} \psi(u,a) f(x) \frac{\vert \grad u\vert}{u^2}\, dv
\end{align*}
and show that 
\begin{equation} 
\label{eq3}
\vert D - D_0\vert \le C \|g-g_0\|_{C^0}
\end{equation} where $C > 0$ depends only on $g_0$.  

We then pass to a larger scale $8a$ and define similar quantities $D^1_0$ and $D^1$ at scale $8a$.  We again have 
\begin{equation}
\label{eq4}
\vert D^1 - D^1_0\vert \le C\|g-g_0\|_{C^0}. 
\end{equation}
Using a monotonicity formula from \cite{agostiniani2024green}, we can show that 
\begin{equation}
    \label{Equation:Ineq}
D^1_0 \ge 0
\end{equation} 
and that 
\begin{equation}
\label{eq2}
D^1 \ge c a^2 \inf_B (R_g - R_{g_0}) - C \vert D-D_0\vert 
\end{equation}
for a universal constant $c > 0$ and a constant $C > 0$ that depends only on $g_0$.

Finally, we aim to combine \eqref{eq3},\eqref{eq4},\eqref{Equation:Ineq}, and \eqref{eq2} to obtain an estimate on $\inf (R_g-R_{g_0})$. 
Here one encounters a difficulty. The known inequalities relating harmonic functions with scalar curvature all have Euclidean space or Schwarzschild as their equality case.  Since we are trying to understand a general metric $g_0$, the formula \eqref{Equation:Ineq} will be a genuine inequality even for the reference metric. Thus we must understand the extent to which \eqref{Equation:Ineq} fails to be an equality, and show that this error can be controlled by choosing the scale $a$ appropriately.  This need to choose a small scale $a$ depending on $\|g-g_0\|_{C^0}$ explains why we obtain a sublinear rate for general metrics $g_0$.  When $g_0$ has rotational symmetry, we can modify these quantities so that $\eqref{Equation:Ineq}$ is indeed an equality, and this facilitates the improvement to a linear rate.

\subsection{Organization}
The remainder of the paper is organized as follows. In Section \ref{Section:Example} we give an example showing that \eqref{Equation:NewConjecture} is sharp.  In Section \ref{Section:PDE-Estimate}, we study how harmonic functions change under $C^0$ perturbations of the metric. Then in Sections \ref{Section:C2}, \ref{Section:Smooth}, and \ref{Section:Symmetry} we prove the main results Theorem \ref{Theorem:MainC2}, Theorem \ref{Theorem:IntroSmooth}, and Theorem \ref{Theorem:MainSymmetry} respectively.  In Section \ref{Section:Measure}, we prove Theorem \ref{Theorem:IntroMeasure} about metrics converging in measure. In Appendix \ref{Appendix:F}, we recall and extend some monotonicity formulas for harmonic functions. Finally, in Appendix \ref{Appendix:Green} we review some well-known asymptotics for a Green's function near the pole, and in Appendix \ref{Appendix:ChangeCoordinates} we discuss the change of coordinates for $C^2$ metrics. 

\subsection{Acknowledgements}
We would thank Xin Zhou for his encouragement on this project.
\section{An Example}
\label{Section:Example}

In this section, we give an example to show that the power $\frac 1 2$ in Theorem \ref{Theorem:MainSmooth} is sharp. Let $B$ be the unit ball centered at the origin in $\R^n$.   On $B$, consider the metric 
\[
g_0 = \phi_0^{\frac{4}{n-2}} g_{\text{euc}}, \quad \text{where } 
\phi_0(x) = 1-\frac{\vert x\vert^4}{4(2+n)}. 
\]
Write $\lap$ and $\grad$ for the Euclidean Laplacian and gradient.  Then $\lap \phi_0 = -  \vert x\vert^2$. It follows that the scalar curvature of $g_0$ is given by 
\[
R_{g_0} = -\frac{4(n-1)}{n-2}\phi_0^{-\frac{n+2}{n-2}} \lap \phi_0 = \frac{4(n-1)}{n-2} \phi_0^{-\frac{n+2}{n-2}} \vert x\vert^2. 
\]
In particular, we have 
\[
R_{g_0}(0) = 0 \quad \text{and}\quad R_{g_0}(x) \ge  \vert x\vert^2
\]
for all $x\in B$. 

Now fix some $\eps > 0$.  Define the scale $r = \eps^{1/4}$.  Let $v$ be the Euclidean torsion function on the ball $B_r$. In other words, $v$ is the solution to 
\[
\begin{cases}
\lap v = -1, &\text{in } B_r\\
v = 0, &\text{on } \bd B_r. 
\end{cases}
\]
Explicitly we have 
\[
v(x) = \frac{r^2 - \vert x\vert^2}{2n}. 
\]
Let $\eta$ be a smooth cut-off function with $\eta\equiv 1$ on $B_{r/2}$ and $\eta\equiv 0$ outside $B_{r/2}$. We can arrange that $\vert \grad \eta\vert \le C/r$ and $\vert \del^2 \eta\vert \le C/r^2$ where $C$ is a constant that (crucially) does not depend on $r$ and therefore does not depend on $\eps$. 

Define another metric $g$ on $B$ by $g = \phi^{\frac{4}{n-2}} g_{euc}$ where 
$
\phi = \phi_0 + a \eps^{1/2} \eta v
$
and $a$ is a constant (that doesn't depend on $\eps$) to be chosen later. 
Then we have 
\[
\|\phi - \phi_0\|_{L^\infty} \le a \eps^{1/2} \|v\|_{L^\infty} \le a \eps^{1/2} r^2 = O(\eps). 
\]
It follows that $\| g - g_0\|_{C^0(B)} = O(\eps)$.  On the other hand, we claim that $\inf_B R_g \ge C \eps^{1/2}$. There are three different regimes to check.  First,  outside $B_r$ we have $\phi = \phi_0$ and it follows that 
\[
R_g = R_{g_0} \ge \vert x\vert^2 \ge r^2 = \eps^{1/2}. 
\]
Second, inside $B_{r/2}$ we have 
\begin{align*}
R_{g}(x) = -\frac{4(n-1)}{n-2} \phi^{-\frac{n+2}{n-2}} \lap \phi &= -\frac{4(n-1)}{n-2} \phi^{-\frac{n+2}{n-2}} (\lap \phi_0 + a \eps^{1/2} \lap v) \\
&= \frac{4(n-1)}{n-2} \phi^{-\frac{n+2}{n-2}} (\vert x\vert^2 + a \eps^{1/2}) \ge a \eps^{1/2}. 
\end{align*} 
Finally, consider the transition region $B_r - B_{r/2}$. Recall that $\vert \grad \eta\vert \le \frac C r$ and $\vert \grad^2\eta \vert \le \frac C {r^2}$. Also observe that $\vert \grad v\vert \le  C r$ and $\vert v\vert \le C r^2$ where again $C$ does not depend on $r$.  Then we have 
\begin{align*}
R_g &= -\frac{4(n-1)}{n-2} \phi^{-\frac{n+2}{n-2}} (\lap \phi_0 + a \eps^{1/2} \eta \lap v + 2 a \eps^{1/2} \grad \eta \cdot \grad v + a\eps^{1/2} v \lap \eta) \\
&= \frac{4(n-1)}{n-2} \phi^{-\frac{n+2}{n-2}} (\vert x\vert^2 + a \eps^{1/2}\eta - 2 a\eps^{1/2} \grad \eta \cdot \grad v - a\eps^{1/2} v \lap \eta) \phantom{\bigg)} \\
&\ge \frac{4(n-1)}{n-2}\phi^{-\frac{n+2}{n-2}} \left(\frac 1 4 \eps^{1/2} + a \eps^{1/2} \eta - 2 a \eps^{1/2} \vert \grad^\delta \eta\vert \vert \grad^\delta v\vert - a\eps^{1/2} \vert v\vert  \vert \lap \eta\vert) \right)\\
&\ge \frac{4(n-1)}{n-2}\phi^{-\frac{n+2}{n-2}} \left(\frac 1 4 \eps^{1/2} -  a C \eps^{1/2} \right) \\
&\ge \frac 1 8 \eps^{1/2}
\end{align*}
provided we select $a$ small enough depending on $C$ (which does not depend on $\eps$).

\section{Estimates For Harmonic Functions} 
\label{Section:PDE-Estimate}

Let $B_r$ denote the Euclidean ball of radius $r$ centered at the origin in $\R^3$.  Let $g_{\text{euc}}$ be the Euclidean metric.  Let $g_0$ be a $C^2$ reference metric on $B_{100}$ for which $\|g_0 - g_{\text{euc}}\|_{C^0} \le \eps_0$.  Also let $g$ be another $C^2$ metric on $B_{100}$ with $\|g- g_0\|_{C^0} \le \eps$ for some $\eps \le \eps_0$.  Let $\Omega$ be a $C^2$ domain in $\R^3$ diffeomorphic to an annulus which satisfies $B_{50} - B_{1/50} \subset \Omega \subset B_{100} - B_{1/{100}}$. Let $u_0$ be a $g_0$-harmonic function on $\Omega$ such that  the two boundary components of $\Omega$ are each level sets of $u_0$.  Then let $u$ be the $g$-harmonic function on $\Omega$ that satisfies $u=u_0$ on $\bd \Omega$.   The goal of this section is to establish several estimates on the size of the difference between $u$ and $u_0$.  The key tool is a classical estimate of Meyers \cite{meyers1963p} which allows one to control the $L^p$ norm of the gradient of a solution to 
\[
Lf := -D_j(a^{ij}D_i f) = \div(X) 
\]
provided only that $X\in L^p$ and the coefficients $a^{ij}$ are close enough to $\delta^{ij}$ in $C^0$.

Throughout this section, we use $C$ to denote a constant which is allowed to depend on the function $u_0$ but not on $g$ or $u$. The value of $C$ may vary from line to line.  The Laplace operator with respect to the $g$-metric is 
\[
\lap_g f = \frac{1}{\sqrt {\vert g\vert}} D_j \left(\sqrt{\vert g\vert} g^{ij} D_i f\right). 
\]
Therefore, on $\Omega$, the function $u$ satisfies 
\[
Lu = -D_j(a^{ij} D_i u) = 0 
\]
where 
$
a^{ij}(x) = g^{ij}  \sqrt{\vert g\vert}
$
is a $C^2$ function of $x$.  
Define $A(x) = (a^{ij}(x))$ and note that $A$ satisfies uniform ellipticity bounds 
\[
\lambda \vert \xi\vert^2 \le a^{ij}\xi_i\xi_j \le \Lambda \vert \xi\vert^2
\]
with $\lambda \ge 1 - C\eps_0$ and $\Lambda \le 1+ C\eps_0$ since $\|g - g_{\text{euc}} \|_{C^0} \le 2\eps_0$.  

Likewise, the Laplace operator with respect to the $g_0$ metric is 
\[
\lap_{g_0} f = \frac{1}{\sqrt {\vert g_0\vert}} D_j \left(\sqrt{\vert g_0\vert} g_0^{ij} D_i f\right).
\]
Hence, on $\Omega$, the function $u_0$ is a solution to 
\[
L_0 u_0 = -D_j(a^{ij}_0 D_i u_0) = 0
\]
where 
$
a^{ij}_0(x) = g^{ij}_0 \sqrt{\vert g_0\vert}
$
is a $C^2$ function of $x$.  Define $A_0(x) = (a_0^{ij}(x))$, and note that we have the estimate $\|A - A_0\|_{L^\infty} \le C\eps$. 

We now prove several estimates on the difference $u - u_0$.  The first step is to estimate the norm $\|\grad u - \grad  u_0\|_{L^2(\Omega)}$.  All of our $L^p$ and Sobolev spaces are taken with respect to the Euclidean metric, and $\grad$ and $\div$ refer to the Euclidean gradient and divergence. 

\begin{prop} 
There is an estimate $\|\grad u - \grad  u_0\|_{L^2(\Omega)} \le C\eps$ provided $\eps_0$ is sufficiently small.  
\end{prop}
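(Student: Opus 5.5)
Set $w = u - u_0$. Since $u = u_0$ on $\bd\Omega$, we have $w \in W^{1,2}_0(\Omega)$. Subtracting the two equations $-D_j(a^{ij}D_i u)=0$ and $-D_j(a_0^{ij}D_i u_0)=0$ gives
\[
-D_j\big(a^{ij} D_i w\big) = D_j\big((a^{ij}-a_0^{ij}) D_i u_0\big) \quad \text{in } \Omega,
\]
that is, $Lw = \div(X)$ with $X^j = -(a^{ij}-a_0^{ij})D_i u_0$, up to sign conventions. The plan is the standard energy estimate for this Dirichlet problem.

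Test the weak form of the equation against $w$ itself, which is admissible because $w\in W^{1,2}_0(\Omega)$. This gives
\[
\int_\Omega a^{ij} D_i w\, D_j w \,dx = -\int_\Omega (a^{ij}-a_0^{ij}) D_i u_0\, D_j w\, dx .
\]
By uniform ellipticity with $\lambda \ge 1 - C\eps_0 \ge 1/2$ for $\eps_0$ small, the left side is at least $\tfrac12\|\grad w\|_{L^2(\Omega)}^2$. By Cauchy--Schwarz and $\|A-A_0\|_{L^\infty}\le C\eps$, the right side is at most $C\eps \|\grad u_0\|_{L^2(\Omega)}\|\grad w\|_{L^2(\Omega)}$. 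Dividing through yields $\|\grad w\|_{L^2(\Omega)} \le C\eps\|\grad u_0\|_{L^2(\Omega)}$.

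It remains to check that $\|\grad u_0\|_{L^2(\Omega)}$ is a finite constant depending only on $u_0$. The domain $\Omega$ is a bounded $C^2$ domain and $g_0$ is $C^2$. Moreover $u_0$ is constant on each boundary component, so boundary Schauder/$L^p$ theory for the divergence-form operator with $C^2$ coefficients gives $u_0 \in C^{1,\alpha}(\overline\Omega)$. In particular $\grad u_0 \in L^2(\Omega)$, and by the convention on $C$ this norm is absorbed into the constant.

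The only subtle points are the admissibility of $w$ as a test function and the existence and uniqueness of $u$. Both follow from Lax--Milgram, since $A$ is uniformly elliptic and bounded and the boundary data $u_0|_{\bd\Omega}$ extends to the $W^{1,2}$ function $u_0$ itself. I do not expect a genuine obstacle here; smallness of $\eps_0$ is used only to keep $\lambda$ bounded below.
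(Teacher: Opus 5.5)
Your proposal is correct and is essentially the paper's argument: test the equation for $w=u-u_0$, whose right-hand side is $\div\big((A-A_0)\grad u_0\big)$ up to sign, against $w$ itself, then use the ellipticity lower bound and Cauchy--Schwarz with $\|A-A_0\|_{L^\infty}\le C\eps$. Your additional remarks on well-posedness (Lax--Milgram) and on the finiteness of $\|\grad u_0\|_{L^2(\Omega)}$ are details the paper leaves implicit by letting $C$ depend on $\|\grad u_0\|_{L^2(\Omega)}$.
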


\begin{proof} 
Define $f = u -  u_0$.  Since $f$ vanishes on $\bd \Omega$, we have 
\[
\int_\Omega \la A(x) \grad f,\grad f\ra\, dx = -\int_\Omega f \div(A(x) \grad f)\, dx.
\] 
Now note that 
\begin{align*}
\div(A(x) \grad f) &= \div(A(x) \grad u) - \div(A(x) \grad  u_0) \\
&= -\div(A(x) \grad  u_0)\\
&=  \div((A_0(x)-A(x)) \grad  u_0).
\end{align*}
Thus we have 
\begin{align*}
\int_\Omega \la A(x) \grad f,\grad f\ra\, dx &= \int_\Omega f\div((A(x)-A_0(x)) \grad u_{0})\, dx \\
&=  \int_\Omega \la \grad f, (A_0(x)-A(x)) \grad u_{0}\ra \, dx. 
\end{align*} 
Therefore we can estimate 
\begin{align*}
\lambda \int_\Omega \vert \grad f\vert^2\, dx &\le \int_\Omega \la A(x)\grad f,\grad f\ra\, dx \\
&=  \int_\Omega \la \grad f, (A_0-A(x)) \grad u_{0}\ra \, dx\\
&\le \|A - A_0\|_{L^\infty} \|\grad f\|_{L^2} \|\grad u_{0}\|_{L^2}. \phantom{\int}
\end{align*} 
This implies that $\|\grad f\|_{L^2} \le C\eps$ where $C$ depends only on $\|\grad u_0\|_{L^2(\Omega)}$. 
\end{proof}

This implies an estimate on the difference in $L^2$. 

\begin{prop}
There is an estimate $\|u - u_0\|_{L^2(\Omega)} \le C\eps$.   
\end{prop}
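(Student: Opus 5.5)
The plan is to apply the Poincaré inequality to $f = u - u_0$ and then invoke the previous proposition. Since $u = u_0$ on $\bd\Omega$, the function $f$ vanishes on $\bd\Omega$. It is $C^2$ up to the boundary away from nothing problematic: both $u$ and $u_0$ solve divergence-form equations with $C^2$ coefficients on the $C^2$ domain $\Omega$. Hence $f \in W^{1,2}_0(\Omega)$.

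The Poincaré inequality for $W^{1,2}_0$ functions on a bounded domain gives
\[
\|f\|_{L^2(\Omega)} \le C_P \|\grad f\|_{L^2(\Omega)}.
\]
Here $C_P$ depends only on the diameter of $\Omega$, or alternatively on its Lebesgue measure via the Faber--Krahn bound. We need this constant to be uniform, and it is: $\Omega \subset B_{100}$, so we may take $C_P \le C\cdot 100$, a universal constant independent of $g$, $u$, and the precise shape of $\Omega$. Concretely, extend $f$ by zero to $B_{100}$ and apply the Poincaré inequality on $B_{100}$. Combining this with the estimate $\|\grad f\|_{L^2(\Omega)} \le C\eps$ from the previous proposition yields $\|u - u_0\|_{L^2(\Omega)} \le C\eps$.

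No step here is a real obstacle. The only point needing care is confirming that the zero boundary trace places $f$ in $W^{1,2}_0$, and that the Poincaré constant does not depend on $g$; both are immediate from the setup.
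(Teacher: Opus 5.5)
Your proof is correct and follows the paper's own argument: apply the Poincaré inequality to $u-u_0$, which vanishes on $\bd\Omega$, note that the constant is uniform because $\Omega\subset B_{100}$, and combine with the previous bound $\|\grad u-\grad u_0\|_{L^2(\Omega)}\le C\eps$. Your zero-extension to $B_{100}$ is a clean way to make that uniformity explicit.
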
 

\begin{proof}
Since $u - u_0$ vanishes on $\bd \Omega$, we can apply the Poincaré inequality to deduce that 
\[
\| u - u_0\|_{L^2(\Omega)} \le C \|\grad u - \grad u_0\|_{L^2(\Omega)}. 
\]
Since we have assumed that $\Omega \subset B_{100}$, the constant in the Poincaré inequality is uniform in $\Omega$.  Then the result follows from the previous estimate. 
\end{proof} 

Next, we obtain bounds on $\grad f$ in $L^p$ for $p > 2$.  
These bounds follow from a theorem of Meyers.  The following special case suffices for our purposes.

\begin{theorem}[{\cite{meyers1963p}} Theorem 2]
Assume that $\Omega$ is a   domain in $\R^3$.  Fix any number $p\ge 3$.  Assume that the function $f$ on $\Omega$ satisfies
\[
Lf := -D_j(a^{ij} D_if)= \div(X)
\]
for some vector field $X\in L^p(\Omega)$.  If the ellipticity ratio $\frac{\lambda}{\Lambda}$ for $L$ is close enough to one (depending on the choice of $p$), then for any point $x\in \Omega$ with $B_{2r}(x)\subset \Omega$ there is an estimate 
\[
\| \grad f\|_{L^p(B_r(x))} \le C\bigg[\|X\|_{L^p(B_{2r}(x))} + r^{3\left(\frac 1 p - \frac 1 2\right)-1} \|f\|_{L^2(B_{2r}(x))}\bigg]
\]
where $C$ depends only on $p$ and $\frac{\lambda}{\Lambda}$. 
\end{theorem}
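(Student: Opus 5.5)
The estimate is a perturbative $L^p$ bound, so I would prove it by treating $L$ as a small perturbation of the Euclidean Laplacian, using the Calderón–Zygmund estimate for $\lap$ and absorbing the error. Fix $p \ge 3$ and write
\[
-\lap f = \div\big(X + (A-\mathrm{Id})\grad f\big),
\]
where $A = (a^{ij})$. First I normalize. Since the ellipticity ratio is close to one, I rescale $A$ by a constant so that $\|A - \mathrm{Id}\|_{L^\infty} \le \delta$, with $\delta$ small depending on the ratio $\lambda/\Lambda$. This rescaling changes $X$ only by a harmless constant. By translation and the scaling $x \mapsto x_0 + r y$, I reduce to $x = 0$ and $r = 1$. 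The factor $r^{3(\frac1p-\frac12)-1}$ is exactly what this scaling produces, because $\|\grad f\|_{L^p(B_r)}$ scales like $r^{3/p - 1}$ times the unit-scale norm, while $\|f\|_{L^2(B_{2r})}$ scales like $r^{3/2}$.

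Next I localize. For $1 \le s < t \le 2$, choose a cutoff $\eta$ with $\eta \equiv 1$ on $B_s$, $\eta$ supported in $B_t$, and $|\grad \eta| \le C/(t-s)$. Setting $w = \eta f$, we get
\[
-\lap w = \div\big(\eta X + (A-\mathrm{Id})\grad w - (A-\mathrm{Id}) f\grad\eta - f\grad\eta\big) + \text{(lower order terms)}.
\]
The lower order terms are $-\grad\eta\cdot(A\grad f + X)$. I handle them by writing $\grad \eta \cdot F = \div(\cdot) + \dots$ through the Newtonian potential; alternatively I treat them as $W^{-1,p}$ data of the form $\div(\grad \Phi)$, where $\Phi$ solves $\lap \Phi = \grad \eta \cdot (A\grad f + X)$. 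Since $w$ has compact support, the global Calderón–Zygmund bound $\|\grad w\|_{L^p(\R^3)} \le C_p\|F\|_{L^p}$ for $-\lap w = \div F$ applies. Choosing $\delta$ so that $C_p\delta \le \frac12$ lets me absorb the term $(A-\mathrm{Id})\grad w$. The remaining error terms involve $f$ or $\grad f$ on $B_t$ but are one derivative lower. By Sobolev embedding they are bounded in $L^p$ by $\frac{C}{t-s}\|\grad f\|_{L^{q}(B_t)}$ with $q < p$ (here $\frac1q = \frac1p + \frac13$), plus $\|f\|_{L^{q}}$ terms.

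Finally I iterate. Starting from the energy (Caccioppoli) estimate $\|\grad f\|_{L^2(B_{t})} \le C\big[\|X\|_{L^2} + \|f\|_{L^2(B_2)}/(2-t)\big]$, I climb Sobolev exponents $2 \to 6 \to \cdots \ge p$, shrinking the balls finitely many times. At each stage I apply the previous step with the exponent given by the Sobolev gain. I use Hölder to bound $\|X\|_{L^{q}} \le C\|X\|_{L^p}$ on bounded balls. Because $p$ is fixed, only finitely many steps are needed, so the resulting constants depend only on $p$ and $\lambda/\Lambda$.

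I expect the main obstacle to be the bookkeeping in the localization step: the commutator term $\grad\eta\cdot A\grad f$ must be placed in divergence form at the correct integrability level. Near $p=\infty$ one must not pass through $p=\infty$, which is why the lower bound $p\ge 3$ is harmless while each finite $p$ requires $\delta=\delta(p)$. An alternative that avoids the commutator is the standard hole-filling and iteration lemma: it absorbs $\frac12\|\grad f\|_{L^p(B_t)}$ into the left-hand side across the family of radii $s<t$.
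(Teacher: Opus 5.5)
The paper gives no proof of this statement; it simply quotes Meyers. Your route is essentially Meyers' argument: treat $L$ as a perturbation of $-\Delta$, use the $L^s$ boundedness of $T=\nabla(-\Delta)^{-1}\div$ on $\R^3$, absorb $(A-\mathrm{Id})\nabla w$, and localize with a cutoff, putting the commutator $\nabla\eta\cdot(A\nabla f+X)$ into divergence form via the Newtonian potential. It is in fact simpler than his version. Because the ratio $\lambda/\Lambda$ may depend on $p$, you only need $\delta C_s<1$ for the finitely many exponents $s$ in your bootstrap (in dimension three, $2\to 6\to p$ suffices). Meyers instead needs Riesz--Thorin to show $C_s\to 1$ as $s\to 2$, in order to get some $p>2$ for arbitrary ellipticity. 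Your scaling computation, the localized equation, and the Hardy--Littlewood--Sobolev/Sobolev bookkeeping are correct.

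The step that does not work as written is the absorption. The inequality $\|\nabla w\|_{L^p}\le C_p\delta\|\nabla w\|_{L^p}+C_p\|R\|_{L^p}$ gives $\|\nabla w\|_{L^p}\le 2C_p\|R\|_{L^p}$ only if you already know $\|\nabla w\|_{L^p}<\infty$. That is precisely what is at stake for a $W^{1,2}$ weak solution with measurable coefficients, and the hole-filling alternative has the same defect. The standard repair (this is how Meyers organizes the argument) is a fixed-point argument:
\begin{itemize}
\item Let $R=\eta X-fA\nabla\eta-\nabla\Phi$, which lies in $L^2\cap L^p$ by the previous bootstrap stage.
\item The map $G\mapsto T((A-\mathrm{Id})G)+T(R)$ is a contraction on $L^2\cap L^p$ once $\delta\max(1,C_p)<1$, using $\|T\|_{L^2\to L^2}=1$.
\item Its fixed point lies in $L^p$, and it equals $\nabla w$. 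Indeed, the fixed point in $L^2$ is unique, and $\nabla w$ is one, since the compactly supported $w\in W^{1,2}$ is the unique $\dot H^1$ solution of its equation.
\end{itemize}
In the paper's applications the coefficients are $C^2$ and $f=u-u_0$ is locally $C^1$, so the a priori finiteness is automatic there.

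Finally, normalizing $A\mapsto A/c$ divides $X$ by $c\approx\Lambda$, so your constant depends on $\Lambda$ and not only on $\lambda/\Lambda$. This is unavoidable. Take $A=t\,\mathrm{Id}$ and $X=-t\nabla f$, and let $t\to 0$: the estimate would then bound $\|\nabla f\|_{L^p(B_r)}$ by $\|f\|_{L^2(B_{2r})}$ for arbitrary $f$. So this is a small imprecision in the statement itself, harmless here since $\lambda,\Lambda\to 1$.
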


Define $\Omega' = B_{40} - B_{1/2}$ and note that $\Omega' \subset \Omega$ with $d(\Omega', \bd \Omega) \ge \frac 1 4$. The previous theorem then gives the following interior estimate.  

\begin{prop}
Fix $p \ge 3$ and assume that the function $f$ on $\Omega$ satisfies 
\[
Lf := -D_j(a^{ij} D_if)= \div(X)
\]
for some vector field $X\in L^p(\Omega)$.  If the ellipticity ratio $\frac{\lambda}{\Lambda}$ for $L$ is close enough to one (depending on $p$), then there is an estimate 
\[
\|\grad f\|_{L^p(\Omega')} \le C\bigg[\|X\|_{L^p(\Omega)} + \|f\|_{L^2(\Omega)}\bigg]. 
\]
The constant $C$ is uniform in $\Omega$ since $d(\Omega',\bd \Omega) \ge \frac 1 4$. 
\end{prop}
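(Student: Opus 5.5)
The plan is a covering argument built on the local estimate in Meyers' theorem. I fix the radius $r = \frac 1 8$. For every point $x\in \Omega'$ the ball $B_{2r}(x) = B_{1/4}(x)$ is contained in $\Omega$, because $d(\Omega',\bd\Omega)\ge \frac 14$; strictly speaking one should take $r$ slightly below $\frac18$ or use the closed-ball version, which changes nothing. Meyers' theorem then applies on each such ball. Since $r$ is a fixed number, the factor $r^{3(\frac1p-\frac12)-1}$ is an absolute constant depending only on $p$. This gives
\[
\|\grad f\|_{L^p(B_r(x))} \le C\Big[\|X\|_{L^p(B_{2r}(x))} + \|f\|_{L^2(B_{2r}(x))}\Big],
\]
with $C$ depending only on $p$ and $\frac{\lambda}{\Lambda}$.

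Next I choose a finite cover of $\Omega'$. Because $\Omega'\subset B_{40}$ is bounded, I can pick points $x_1,\dots,x_N\in \Omega'$ with $\Omega'\subset \bigcup_k B_r(x_k)$, for example the centers of a maximal $r$-separated subset of $\overline{\Omega'}$. The number $N$ is bounded by a volume-comparison count depending only on the fixed sets $B_{40}$ and $B_{1/2}$ and on $r$. It therefore does not depend on $\Omega$, on $f$, or on $X$.

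Then I sum the local estimates. Raising to the $p$-th power,
\[
\|\grad f\|_{L^p(\Omega')}^p \le \sum_{k=1}^N \|\grad f\|^p_{L^p(B_r(x_k))} \le C^p \sum_{k=1}^N \Big[\|X\|_{L^p(\Omega)} + \|f\|_{L^2(\Omega)}\Big]^p .
\]
Here each local norm on $B_{2r}(x_k)$ is bounded crudely by the corresponding norm over all of $\Omega$. The right-hand side equals $N C^p[\cdots]^p$, and taking $p$-th roots yields the claim with constant $N^{1/p}C$. If one prefers a sharper bound that uses $\|X\|_{L^p}$ only locally, a bounded-overlap argument gives the same conclusion, but it is not needed here.

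There is no real obstacle. The only point requiring care is checking that the constant is uniform in $\Omega$. This holds because the radius $r$ and the count $N$ depend only on $\Omega'$ and on the lower bound $\frac14$ for $d(\Omega',\bd\Omega)$, and Meyers' constant depends only on $p$ and $\frac{\lambda}{\Lambda}$.
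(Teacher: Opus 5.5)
Your proposal is correct and is essentially the paper's argument: the paper gives no separate proof and simply says this interior estimate follows from Meyers' local estimate. Covering $\Omega'$ by finitely many balls $B_r(x_k)$ of a fixed radius $r=\frac18$, with $B_{2r}(x_k)\subset\Omega$, and summing the local bounds is exactly that implicit deduction, with the constant depending only on $p$, $\frac{\lambda}{\Lambda}$, and the fixed sets $\Omega'$ and $B_{50}-B_{1/50}$.
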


We record the following consequence. 

\begin{prop} Fix any $p \ge 3$.  Assuming $\eps_0$ is sufficiently small (depending on $p$), there is an estimate 
$
\|\grad u - \grad u_0\|_{L^p(\Omega')} \le C\eps.
$
\end{prop}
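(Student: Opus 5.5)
We apply the preceding interior Meyers-type estimate to the difference $f = u - u_0$. As in the proof of the first proposition of this section, since $\div(A\grad u) = 0$ and $\div(A_0\grad u_0) = 0$, we have
\[
Lf = -\div(A\grad f) = \div(A\grad u_0) = \div\big((A - A_0)\grad u_0\big).
\]
So $f$ solves the equation $Lf = \div(X)$ with $X = (A - A_0)\grad u_0$. The ellipticity constants of $L$ satisfy $\lambda \ge 1 - C\eps_0$ and $\Lambda \le 1 + C\eps_0$. Hence, once $\eps_0$ is chosen small enough depending on $p$, the ratio $\lambda/\Lambda$ is as close to one as the previous proposition requires. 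That proposition then gives
\[
\|\grad u - \grad u_0\|_{L^p(\Omega')} \le C\Big[\|(A - A_0)\grad u_0\|_{L^p(\Omega)} + \|u - u_0\|_{L^2(\Omega)}\Big].
\]

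The second term is at most $C\eps$ by the $L^2$ estimate on $u - u_0$ proved above. For the first term we use $\|A - A_0\|_{L^\infty} \le C\eps$, which gives
\[
\|(A - A_0)\grad u_0\|_{L^p(\Omega)} \le C\eps\,\|\grad u_0\|_{L^p(\Omega)}.
\]
It then remains to know that $\|\grad u_0\|_{L^p(\Omega)}$ is finite and depends only on $u_0$. This is the one point needing care. The issue is that $u_0$ is harmonic for the $C^2$ metric $g_0$ on a $C^2$ domain with locally constant boundary values, so we need it regular up to the boundary. This follows from global Schauder theory, which makes $u_0 \in C^{1,\alpha}(\overline\Omega)$, so $\grad u_0$ is bounded and $\|\grad u_0\|_{L^p(\Omega)} \le C$. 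Since $C$ is allowed to depend on $u_0$, this suffices.

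If one prefers to avoid boundary regularity, there is a cleaner alternative: apply the interior estimate on the slightly smaller annulus $\Omega'' = B_{45} - B_{1/3}$, which is compactly contained in $\Omega$. Interior regularity already gives $\grad u_0 \in L^\infty(\Omega'')$, and the previous proposition, with its uniform constant, applies to $\Omega'\subset \Omega''$ since $d(\Omega', \bd\Omega'')$ is bounded below by a universal constant. Either way we conclude
\[
\|\grad u - \grad u_0\|_{L^p(\Omega')} \le C\eps,
\]
with $C$ depending on $u_0$ and $p$.
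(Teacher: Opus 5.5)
Your proof is correct and follows the paper's own argument: apply the interior Meyers estimate to $f = u - u_0$ with $X = (A - A_0)\grad u_0$, then bound the first term by $\|A - A_0\|_{L^\infty}\|\grad u_0\|_{L^p(\Omega)} \le C\eps$ and the second by the earlier $L^2$ estimate. Your added justification that $\|\grad u_0\|_{L^p(\Omega)}$ is finite, either by boundary regularity or by shrinking to $\Omega''$, addresses a point the paper simply absorbs into the constant depending on $u_0$.
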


\begin{proof}  Again, let $f = u-u_0$ and recall  that 
\[
\div(A(x)\grad f) = \div((A_0(x)-A(x))\grad u_{0}).
\]
Also note that the ellipticity constants for $A$ converge to 1 as $\eps \to 0$.  Therefore it follows from the previous theorem that we have an estimate 
\begin{align*}
\| \grad f\|_{L^p(\Omega')} &\le C\left( \|(A_0(x)-A(x)) \grad u_{0}\|_{L^p(\Omega)} + \|f\|_{L^2(\Omega)}\right) \\
&\le C \|A-A_0\|_{L^\infty}  \|\grad u_{0}\|_{L^p(\Omega)} + C \|u - u_0\|_{L^2(\Omega)}\\
&\le C\eps,
\end{align*} 
provided $\eps$ is sufficiently small depending on $p$. Here $C$ depends only on $\|\grad u_0\|_{L^p(\Omega)}$ (which gives a uniform upper bound for $\|\grad u_0\|_{L^2(\Omega)}$ as well).
This proves the proposition. 
\end{proof} 

Next we control the $L^p$ difference between $u$ and $u_0$.  

\begin{prop}
Fix any $p \ge 3$. Assuming $\eps_0$ is sufficiently small (depending on $p$), there is an estimate $\|u - u_0\|_{L^p(\Omega')} \le C\eps$. 
\end{prop}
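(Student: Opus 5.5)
The plan is to combine the gradient estimate just proved with a Sobolev inequality, working with $f = u - u_0$ on $\Omega'$. The proof of the previous proposition actually bounds $\nabla f$ in $L^p$ on the region where Meyers' estimate applies. Together with the $L^2$ bound on $f$, this gives control of $f$ in $W^{1,p}(\Omega')$, and Sobolev embedding then controls $\|f\|_{L^p(\Omega')}$.

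First, take the previous proposition: $\|\nabla f\|_{L^p(\Omega')} \le C\eps$. Recall also $\|f\|_{L^2(\Omega)} \le C\eps$ from the $L^2$ proposition. Since $\Omega' = B_{40} - B_{1/2}$ is a fixed Lipschitz (indeed smooth) bounded domain, the Sobolev/Gagliardo--Nirenberg inequality on $\Omega'$ gives
\[
\|f\|_{L^p(\Omega')} \le C\big(\|\nabla f\|_{L^p(\Omega')} + \|f\|_{L^1(\Omega')}\big) \le C\big(\|\nabla f\|_{L^p(\Omega')} + \|f\|_{L^2(\Omega')}\big).
\]
The inequality in this form follows from Poincaré--Wirtinger on the connected domain $\Omega'$. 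Write $f = (f - \bar f) + \bar f$, where $\bar f$ is the mean of $f$ over $\Omega'$. Then
\[
\|f - \bar f\|_{L^p(\Omega')} \le C\|\nabla f\|_{L^p(\Omega')},
\]
and
\[
\|\bar f\|_{L^p(\Omega')} \le C|\bar f| \le C\|f\|_{L^2(\Omega')}.
\]
Combining these two bounds yields $\|f\|_{L^p(\Omega')} \le C\eps$. The constant depends only on $p$ and the fixed domain $\Omega'$, so it is uniform in $\Omega$, $g$ and $u$.

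The main point needing care is the requirement that $\Omega'$ be a fixed, connected domain. Only then are the Poincaré--Wirtinger constant and the $L^p$ embedding constant uniform. This holds because $\Omega'$ is the same annulus for every admissible $\Omega$. No boundary condition on $\partial\Omega'$ is needed, since we use the mean-value version of Poincaré rather than the zero-trace version. For $p \ge 3$ no embedding into $L^\infty$ is required, which avoids the borderline case $p = 3$. Alternatively, one could run Meyers' estimate at exponent $p' > 3$ and use Morrey's embedding to get even $L^\infty$ control, but this is not needed for the statement.
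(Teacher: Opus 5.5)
Your proposal is correct and is essentially the paper's own argument. The paper also splits $f = u - u_0$ into its mean $\overline f$ over $\Omega'$ and the remainder $f - \overline f$. It bounds $\|\overline f\|_{L^p(\Omega')}$ by H\"older in terms of $\|f\|_{L^2(\Omega)} \le C\eps$, and bounds $f - \overline f$ by the Poincar\'e--Wirtinger inequality together with the $L^p$ gradient estimate $\|\grad f\|_{L^p(\Omega')} \le C\eps$.
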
 

\begin{proof}
Again write $f = u - u_0$ and recall that we have estimates 
\[
\|\grad f\|_{L^p(\Omega')} \le C\eps \text{ and } \|f\|_{L^2(\Omega)} \le C\eps.
\]
Let $\overline f$ be the average value of $f$ on $\Omega'$.  Then we have 
\begin{align*}
\|\overline f\|_{L^p(\Omega')}^p & = \int_{\Omega'} \vert \overline f\vert^p = \vert \Omega'\vert \left\vert \frac 1 {\vert \Omega'\vert} \int_{\Omega'} f \right\vert^p \le \vert \Omega'\vert^{1-p} \left[\int_{\Omega'} \vert f\vert\right]^p \le \vert \Omega'\vert^{1- p/2} \|f\|_{L^2(\Omega)}^{p}. 
\end{align*}
Thus we obtain 
\[
\| \overline f\|_{L^p(\Omega')} \le C \|f\|_{L^2(\Omega)} 
\]
where $C$ depends only on the volume of $\Omega'$. 
The Poincaré inequality now implies that 
\[
\|f - \overline f\|_{L^p(\Omega')} \le C \|\grad f\|_{L^p(\Omega')}. 
\]
Thus we have 
\[
\|f\|_{L^p(\Omega')} \le C \|\grad f\|_{L^p(\Omega')} + C \|\overline f\|_{L^p(\Omega')} \le C\eps,
\]
as needed. 
\end{proof}

This implies the following H\"older bound. 

\begin{prop}
There is an $\eps_0 > 0$ so that $\|u-u_0\|_{C^{0,\alpha}(\Omega')} \le C\eps$ whenever $\eps \le \eps_0$. 
\end{prop}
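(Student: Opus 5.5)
The plan is to combine the two preceding estimates, $\|\grad u - \grad u_0\|_{L^p(\Omega')} \le C\eps$ and $\|u-u_0\|_{L^p(\Omega')}\le C\eps$, into a $W^{1,p}$ bound with $p>3$. Then Morrey's embedding converts this into a H\"older bound. Concretely, fix some $p>3$, for instance $p=6$, and take $\eps_0$ small enough (depending on this $p$) that both previous propositions apply. With $f = u-u_0$ this gives
\[
\|f\|_{W^{1,p}(\Omega')} \le \|f\|_{L^p(\Omega')} + \|\grad f\|_{L^p(\Omega')} \le C\eps .
\]

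Next, apply Morrey's inequality on $\Omega'$ with exponent $\alpha = 1 - \frac 3 p$, which equals $\frac12$ when $p=6$:
\[
\|f\|_{C^{0,\alpha}(\Omega')} \le C_{M}\|f\|_{W^{1,p}(\Omega')} \le C\eps .
\]
Since $\Omega' = B_{40}-B_{1/2}$ is a fixed Lipschitz (indeed smooth) domain, it is an extension domain, so the Morrey constant $C_M$ depends only on $p$ and $\Omega'$. In particular it is independent of $g$, of $u$, and of the particular choice of $\Omega$.

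The only point requiring care is that Morrey's inequality be valid up to the boundary of $\Omega'$. There are two ways to secure this. One is to use the extension operator for the fixed annulus $\Omega'$. The other is to observe that the $L^p$ estimates of the previous propositions hold equally on the slightly larger annulus $B_{45}-B_{1/3}$, which is still at positive distance from $\bd\Omega$; one then applies Morrey's inequality on balls of a fixed radius centered at points of $\Omega'$ and controls the H\"older quotient for far-apart points by twice the sup norm. Either way, the constants depend only on fixed geometry, and the stated estimate follows with $\alpha = 1-3/p$.
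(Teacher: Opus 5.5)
Your proposal is correct and follows the paper's proof, which simply combines the $W^{1,p}$ bounds on $u-u_0$ from the two preceding propositions (with some fixed $p>3$) with the Sobolev--Morrey embedding. Your added remarks are also sound: fixing $p$ fixes $\eps_0$, and the embedding constant on the fixed annulus $\Omega' = B_{40}-B_{1/2}$ is uniform.
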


\begin{proof}
Combine the $W^{1,p}$ bounds on $u-u_0$ with the Sobolev embedding theorem. 
\end{proof}

Finally, we record a general integral estimate which will be needed later. First we prove a lemma estimating the difference between $\grad^g u$ and $\grad^{g_0} u_0$. 

\begin{lemma}
Fix a number $1\le q \le 3$. Then we have 
\[
\big \vert \vert \grad^g u\vert_{g}^q - \vert \grad^{g_0} u_0\vert_{g_0}^q\big\vert \le C (\vert \grad u\vert + \vert \grad u_0\vert)^{q-1} \vert \grad u - \grad u_0\vert + C \|g-g_0\|_{C^0} \vert \grad u_0\vert^q,
\]
where everything on the right hand side is with respect to the Euclidean metric. 
\end{lemma}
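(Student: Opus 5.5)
Write $\vert \grad^g u\vert_g^2 = g^{ij} D_i u D_j u$ and $\vert \grad^{g_0} u_0\vert_{g_0}^2 = g_0^{ij} D_i u_0 D_j u_0$. Set $F_g(\xi) = (g^{ij}\xi_i\xi_j)^{q/2}$ for a covector $\xi$. The plan is to insert the intermediate quantity $F_g(\grad u_0) = \vert \grad^g u_0\vert_g^q$ and use the triangle inequality:
\[
\big\vert F_g(\grad u) - F_{g_0}(\grad u_0)\big\vert \le \big\vert F_g(\grad u) - F_g(\grad u_0)\big\vert + \big\vert F_g(\grad u_0) - F_{g_0}(\grad u_0)\big\vert .
\]
The first term changes the function with the metric fixed, and the second changes the metric with the function fixed. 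Throughout I use that $g$ and $g_0$ are both $2\eps_0$-close to Euclidean. So $g^{ij}$ is uniformly equivalent to $\delta^{ij}$, and $F_g(\xi)$ is comparable to $\vert \xi\vert^q$ with universal constants.

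\emph{First term.} The norm $\xi\mapsto \vert\xi\vert_g := (g^{ij}\xi_i\xi_j)^{1/2}$ is a norm equivalent to the Euclidean one. By the reverse triangle inequality,
\[
\big\vert\, \vert \xi\vert_g - \vert \eta\vert_g \big\vert \le \vert \xi-\eta\vert_g \le C\vert \xi - \eta\vert .
\]
For $q\ge 1$ the mean value theorem applied to $t\mapsto t^q$ gives $\vert s^q - t^q\vert \le q\max(s,t)^{q-1}\vert s-t\vert$. Taking $s=\vert\grad u\vert_g$ and $t=\vert\grad u_0\vert_g$, we have $\max(s,t)\le C(\vert\grad u\vert+\vert\grad u_0\vert)$. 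This yields the bound $C(\vert \grad u\vert + \vert \grad u_0\vert)^{q-1}\vert \grad u - \grad u_0\vert$, with $C$ depending only on $q\le 3$ and the ellipticity. The case where $\max(s,t)=0$ is trivial.

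\emph{Second term.} Fix $\xi=\grad u_0$ and compare $s=\vert\xi\vert_g$ with $t=\vert\xi\vert_{g_0}$. Since $g^{-1}-g_0^{-1} = g^{-1}(g_0-g)g_0^{-1}$, we get $\|g^{-1}-g_0^{-1}\|_{C^0}\le C\|g-g_0\|_{C^0}$. Hence $\vert s^2-t^2\vert \le C\|g-g_0\|_{C^0}\vert\xi\vert^2$. Because $s,t\ge c\vert\xi\vert$, this gives $\vert s-t\vert = \vert s^2-t^2\vert/(s+t)\le C\|g-g_0\|_{C^0}\vert \xi\vert$. Applying the mean value estimate for $t^q$ again, with $\max(s,t)\le C\vert\xi\vert$, gives $\vert s^q-t^q\vert\le C\|g-g_0\|_{C^0}\vert \grad u_0\vert^q$. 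An alternative is to write $s^q-t^q = (s^2)^{q/2}-(t^2)^{q/2}$ and use the mean value theorem in the variable $s^2$. That route requires care when $q/2<1$, which is why I go through $s-t$ instead.

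Adding the two bounds gives the lemma. There is no real obstacle here. The only points needing care are:
\begin{enumerate}[(i)]
\item using uniform equivalence of $g$, $g_0$ with the Euclidean metric, so that the constants do not depend on $g$;
\item handling the nonsmoothness of $t^q$ and $\vert\cdot\vert$ at $0$, which is avoided by working with the norms directly via the reverse triangle inequality rather than differentiating $F_g$.
\end{enumerate}
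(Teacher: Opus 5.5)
Your proof is correct and uses the same decomposition as the paper: you insert the intermediate quantity $\vert \grad u_0\vert_g^q$ and split into a change-of-gradient term and a change-of-metric term. The paper bounds each piece by integrating the derivative of $v\mapsto \la Gv,v\ra^{q/2}$ along the segment from $\grad u_0$ to $\grad u$, and of $H\mapsto \la Hv,v\ra^{q/2}$ along the segment from $G_0$ to $G$; you instead reduce both to the scalar mean value theorem for $t\mapsto t^q$, via the reverse triangle inequality and the identity $g^{-1}-g_0^{-1}=g^{-1}(g_0-g)g_0^{-1}$, which is slightly more elementary and sidesteps the harmless non-differentiability of $\la Gv,v\ra^{q/2}$ at $v=0$ when $q=1$.
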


\begin{proof}
Fix a point $x$ and let $\xi = \grad u(x)$ and $\eta = \grad u_0(x)$ be the gradients respect to the Euclidean metric.  Define the matrices $G = g^{ij}(x)$ and $G_0 = (g_0)^{ij}(x)$.  Recall we are assuming the bounds $\|g_0 - g_{\text{euc}}\|_{C^0} \le \eps_0$ and $\|g-g_0\|_{C^0} \le \eps$.  Thus we have $\|G_0 - I\| \le C\eps_0$ and $\|G-I\|\le C\eps_0$ and $\|G - G_0\| \le C\eps$.  It doesn't matter what exact norm we use here since all norms on a finite dimensional vector space are equivalent.  In particular, using the operator norm, we see that there are ellipticity bounds 
\begin{gather*}
\frac 1 2 \vert v\vert^2 \le \la Gv,v\ra \le 2 \vert v\vert^2 \quad \text{and}\quad \frac 1 2 \vert v\vert^2 \le \la G_0 v,v\ra \le 2\vert v\vert^2.
\end{gather*}
In fact, $\|[G_0 + t(G-G_0)] - I\|\le C\eps_0$ for all $t\in [0,1]$ and so 
\[
\frac 1 2 \vert v\vert^2 \le \la [G_0 + t(G-G_0)] v,v\ra \le 2\vert v\vert^2.
\]
We also note that 
\[
\vert \la G v_1,v_2\ra \vert \le \|G\| \vert v_1\vert \vert v_2\vert \le C \vert v_1\vert \vert v_2\vert
\]
and similarly for $G_0$. 

We want to estimate 
\[
\big \vert \la G \xi, \xi\ra ^{q/2} - \la G_0 \eta, \eta\ra^{q/2}\big\vert. 
\]
By the triangle inequality, it is enough to control 
\[
\big \vert \la G \xi,\xi\ra^{q/2} -\la G \eta, \eta\ra^{q/2}\big\vert + 
\big \vert \la G \eta, \eta\ra^{q/2} - \la G_0 \eta, \eta\ra^{q/2}\big\vert. 
\]
For the first term, consider the function $F\colon \R^3\to \R$ given by $F(v) = \la Gv,v\ra^{q/2}$.  Consider the path $t\in [0,1]\mapsto \eta + t(\xi - \eta)$. We have 
\begin{align*}
F(\xi) - F(\eta) &= \int_0^1 \frac{d}{dt} F(\eta + t(\xi-\eta))\, dt\\ 
&= \int_0^1 \la D F(\eta + t(\xi - \eta)), \xi - \eta\ra \, dt.
\end{align*}
Now observe that 
\[
\la D F(v_1),v_2 \ra=  q  \la \la Gv_1,v_1\ra ^{\frac q 2 - 1} Gv_1,v_2\ra. 
\]
Hence we have 
\[
\vert \la D F(v_1), v_2\ra \vert \le C \vert v_1\vert^{q-1} \vert v_2\vert 
\]
where we used $\la Gv,v\ra \le 2\vert v\vert^2$ and $\vert \la Gv_1,v_2\ra\vert \le C \vert v_1\vert \vert v_2\vert$. 
 It follows that 
\[
\vert F(\xi) - F(\eta)\vert \le C (\vert \xi\vert + \vert \eta\vert)^{q-1} \vert \xi - \eta\vert. 
\]
This handles the first term. 

For the second term we argue similarly.  Let $M$ be the space of symmetric $3\times 3$ matrices. Consider the function $E\colon M \to \R$ given by $E(H) = \la H v,v\ra^{q/2}$. Consider the path $t\in [0,1]\mapsto G_0 + t(G-G_0)$.  We have 
\begin{align*}
E(G) - E(G_0) &= \int_0^1 \frac{d}{dt} E(G_0 + t (G-G_0)) \, dt\\
&= \int_0^1 \la DE(G_0 + t (G-G_0)), G-G_0\ra \, dt. 
\end{align*}
We compute that 
\[
\la DE(H_1), H_2\ra = \frac q 2 \la H_1 v, v \ra^{\frac q 2 - 1} \la H_2 v,v\ra. 
\]
It follows that 
\[
\vert \la DE(G_0 + t(G-G_0)),G-G_0\ra \vert \le C \|G-G_0\| \vert v\vert^q
\]
and hence that 
\[
\vert E(G) - E(G_0)\vert \le C \|G-G_0\| \vert v\vert^q. 
\]
This handles the second term. 
\end{proof}

\begin{prop} \label{Proposition:IntegralBound} Assume that $u_0$ satisfies 
\[
0 < C_1 \le \vert \grad u_0(x)\vert \le C_2
\]
in $\Omega$ and that level sets of $u_0$ satisfy $\area_{g_0}(\{u_0 = t\}) \le C_3$. 
Fix numbers $c < d$.  Define 
\begin{gather*}
    U = \{c\le u \le d\},\\
    U_0 = \{c\le u_0\le d\},
\end{gather*}
and assume $U,U_0 \subset \Omega'$ with $d(U,\bd \Omega'), d(U_0,\bd \Omega')\ge \frac 1 {10}$. 
Let $\Psi(y)$ be an $L$-Lipschitz function of $y\in [c-1,d+1]$. Also let $f(x)$ be a bounded continuous function on $\Omega$.  Fix $1\le q\le 3$. Then there is an estimate 
\[
\left\vert \int_{U} \Psi(u) f(x)\vert \grad^g u\vert_g^q \, dv_g - \int_{U_0} \Psi(u_0)f(x) \vert \grad^{g_0} u_0\vert_{g_0}^q\, dv_{g_0} \right\vert \le C\eps 
\]
provided $\eps$ is sufficiently small. Here $C$ depends only on $C_1$, $C_2$, $C_3$, $\|f\|_{L^\infty}$, $L$, and $\|\Psi\|_{L^\infty}$. 
\end{prop}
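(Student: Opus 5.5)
We split the difference into two pieces: one from changing the integrand with the domain held fixed, and one from the fact that the domains $U$ and $U_0$ differ. Write
\[
I = \int_{U} \Psi(u) f \vert \grad^g u\vert_g^q \, dv_g, \qquad I_0 = \int_{U_0} \Psi(u_0) f \vert \grad^{g_0} u_0\vert_{g_0}^q\, dv_{g_0}.
\]
Let $\chi_{[c,d]}$ be the indicator of $[c,d]$, and write $dv_g = \sqrt{\vert g\vert}\,dx$. Then
\[
I - I_0 = \int_{\Omega'} \Big[\chi_{[c,d]}(u)\Psi(u)\vert \grad^g u\vert_g^q\sqrt{\vert g\vert} - \chi_{[c,d]}(u_0)\Psi(u_0)\vert\grad^{g_0}u_0\vert_{g_0}^q\sqrt{\vert g_0\vert}\Big] f\,dx.
\]
We add and subtract $\chi_{[c,d]}(u_0)\Psi(u)\vert\grad^g u\vert_g^q\sqrt{\vert g\vert}$. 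This gives a \emph{bulk term}
\[
\int_{U_0}\big[\Psi(u)\vert\grad^g u\vert_g^q\sqrt{\vert g\vert}-\Psi(u_0)\vert\grad^{g_0}u_0\vert_{g_0}^q\sqrt{\vert g_0\vert}\big] f\,dx
\]
and a \emph{boundary-layer term}
\[
\int_{\Omega'} \big(\chi_{[c,d]}(u)-\chi_{[c,d]}(u_0)\big)\Psi(u)\vert\grad^g u\vert_g^q \sqrt{\vert g\vert} f\,dx.
\]

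\textbf{Bulk term.} We decompose the integrand further.
\begin{enumerate}[(i)]
\item The factor $\sqrt{\vert g\vert}-\sqrt{\vert g_0\vert}$ is $O(\eps)$ pointwise.
\item We have $\vert\Psi(u)-\Psi(u_0)\vert\le L\vert u-u_0\vert\le CL\eps$. This uses the H\"older bound $\|u-u_0\|_{C^{0,\alpha}(\Omega')}\le C\eps$. Since $u$ takes values within $C\eps$ of $u_0$, $\Psi(u)$ is defined once $\eps<1$.
\item The difference $\vert\grad^g u\vert_g^q-\vert\grad^{g_0}u_0\vert_{g_0}^q$ is controlled by the preceding Lemma.
\end{enumerate}
For (iii), integrating the Lemma's bound and applying H\"older's inequality gives
\[
C\,\big\|\vert\grad u\vert+\vert\grad u_0\vert\big\|_{L^{3}(\Omega')}^{q-1}\,\|\grad u-\grad u_0\|_{L^{3}(\Omega')} + C\eps\|\grad u_0\|^q_{L^q}.
\]
This is at most $C\eps$, by the $W^{1,p}$ estimate with $p=3$ (or larger) and the bound $\vert\grad u_0\vert\le C_2$. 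The factors appearing in (i) and (ii) are multiplied by $\vert\grad u\vert^q$, which is bounded in $L^1(\Omega')$ because $\grad u$ is bounded in $L^3$ and $q\le 3$.

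\textbf{Boundary-layer term.} The integrand is supported where exactly one of $u,u_0$ lies in $[c,d]$. Since $\vert u-u_0\vert\le C\eps$ on $\Omega'$, this set is contained in
\[
S=\{c-C\eps\le u_0\le c+C\eps\}\cup\{d-C\eps\le u_0\le d+C\eps\}.
\]
We estimate $\vert S\vert$ by the coarea formula. Using $\vert\grad u_0\vert\ge C_1$ and the area bound $C_3$ on level sets (Euclidean and $g_0$ areas are comparable),
\[
\vert S\vert \le \int_{[c-C\eps,c+C\eps]\cup[d-C\eps,d+C\eps]} \int_{\{u_0=t\}}\frac{1}{\vert\grad u_0\vert}\,d\mathcal H^2\,dt \le 4C\eps\, C_3/C_1.
\]
The integrand is bounded by $C\vert\grad u\vert^q$. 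Here lies the main obstacle: $\grad u$ is only controlled in $L^p$, not pointwise. We write $\vert\grad u\vert^q\le C(\vert\grad u_0\vert^q+\vert\grad u-\grad u_0\vert^q)$. The $u_0$ part contributes at most $C_2^q\vert S\vert\le C\eps$. For the remaining part, H\"older's inequality with large $p$ gives
\[
\int_S\vert\grad(u-u_0)\vert^q \le \|\grad(u-u_0)\|_{L^p}^q\vert S\vert^{1-q/p}\le C\eps^{q}\eps^{1-q/p},
\]
and this is $\le C\eps$ since $\eps\le 1$. The hypothesis $d(U,\bd\Omega'),\,d(U_0,\bd\Omega')\ge\frac1{10}$ ensures that $S$ and all sets above lie inside $\Omega'$, where the estimates hold. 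Combining the bulk and boundary-layer bounds gives $\vert I-I_0\vert\le C\eps$.
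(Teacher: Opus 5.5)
Your proposal is correct and follows essentially the paper's argument. The $U$ versus $U_0$ mismatch is handled by a coarea estimate using $\vert \nabla u_0\vert \ge C_1$, the area bound $C_3$, and $\|u-u_0\|_{L^\infty(\Omega')}\le C\eps$. The integrands are then compared on a fixed domain via the preceding Lemma, the $W^{1,p}$ bounds, the Lipschitz bound on $\Psi$, and the pointwise bound on $\sqrt{\vert g\vert}-\sqrt{\vert g_0\vert}$.

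The only difference is bookkeeping. The paper swaps the domain on the $u_0$-integral, sandwiching $U_0''\subset U,U_0\subset U_0'$ with $U_0''=\{c+K\eps\le u_0\le d-K\eps\}$ and $U_0'=\{c-K\eps\le u_0\le d+K\eps\}$. There the integrand is pointwise bounded, and the paper then compares integrands on $U$, so it never needs to control $\vert\nabla u\vert^q$ on a thin set. Your extra step on $S$ is valid, but it is easier than you make it: $\int_{\Omega'}\vert\nabla(u-u_0)\vert^q\,dx\le C\|\nabla(u-u_0)\|_{L^3(\Omega')}^q\le C\eps^q\le C\eps$ already, without using the smallness of $\vert S\vert$.
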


\begin{proof}
We know that $\|u-u_0\|_{L^\infty} \le K\eps$ for some constant $K >0$.  Define the sets 
\begin{gather*}
U_0' = \{c-K\eps \le u_0 \le d+K\eps\},\\
U_0'' = \{c+K\eps \le u_0 \le d-K\eps\}
\end{gather*} 
and note that $U_0'' \subset U,U_0\subset U_0'$. Moreover, the co-area formula 
together with the lower bound on $\vert \grad^{g_0} u_0\vert_{g_0}$ and the upper bound on $\area_{g_0}(\{u=t\})$ imply that 
\[
\vol_g(U_0' - U_0'') \le C\eps. 
\]
It follows that 
\begin{gather*}
\left\vert \int_{U_0'} \Psi(u_0) f(x) \vert \grad^{g_0} u_0\vert_{g_0}^q \, dv_{g_0} - \int_{U_0} \Psi(u_0)f(x) \vert \grad^{g_0} u_0\vert_{g_0}^q \, dv_{g_0}\right \vert \le C\eps, \\
\left\vert \int_{U_0'} \Psi(u_0) f(x) \vert \grad^{g_0} u_0\vert_{g_0}^q \, dv_{g_0} - \int_{U} \Psi(u_0) f(x) \vert \grad^{g_0} u_0\vert_{g_0}^q \, dv_{g_0}\right \vert \le C\eps.
\end{gather*}
Hence it suffices to show that 
\[
\left\vert \int_{U} \Psi(u) f(x) \vert \grad^{g} u\vert_{g}^q \, dv_{g} - \int_{U} \Psi(u_0) f(x) \vert \grad^{g_0} u_0\vert_{g_0}^q \, dv_{g_0}\right \vert \le C\eps.
\]
Define 
\begin{align*}
I &= \left\vert \int_{U} \Psi(u) f(x) \vert \grad^{g} u\vert_{g}^q \, dv_{g} - \int_{U} \Psi(u_0) f(x) \vert \grad^{g_0} u_0\vert_{g_0}^q \, dv_{g_0}\right \vert\\
&= \left\vert \int_U \Psi(u) f(x) \vert \grad^g u\vert_g^q \sqrt{\vert g\vert} - \Psi(u_0) f(x)\vert \grad^{g_0}u_0\vert_{g_0}^q \sqrt{\vert g_0\vert}\, dx \right\vert.
\end{align*}
Then we have $I \le II + III$ where 
\begin{gather*}
    II = \left\vert \int_U \Psi(u) f(x) \sqrt{\vert g\vert} \vert \grad^g u\vert_g^q - \Psi(u) f(x)\sqrt{\vert g\vert}\vert \grad^{g_0}u_0\vert_{g_0}^q\, dx \right\vert,\\
    III= \left\vert \int_U \vert \grad^{g_0}u_0\vert_{g_0}^q \Psi(u)f(x) \sqrt{\vert g\vert} -\vert \grad^{g_0}u_0\vert^q_{g_0}\Psi(u_0) f(x)\sqrt{\vert g_0\vert}\, dx \right\vert.
\end{gather*}
For $III$, we can estimate 
\begin{align*}
    III &\le C  \int_U \left\vert \Psi(u) \sqrt{\vert g\vert} - \Psi(u_0) \sqrt{\vert g\vert}\right\vert  \, dx  + C  \int_U \left\vert\Psi(u_0) \sqrt{\vert g\vert} - \Psi(u_0) \sqrt{\vert g_0\vert}\right\vert \, dx\\
    &\le C  \int_U\left\vert \Psi(u) - \Psi(u_0)\right\vert\, dx + C  \int_U \left\vert\sqrt{\vert g\vert} - \sqrt{\vert g_0\vert}\right\vert\, dx\\
    &\le C \|u-u_0\|_{L^\infty} + C \|g-g_0\|_{C^0} \le C\eps. \phantom{\int}
\end{align*}
where we used an upper bound on $\vert \grad^{g_0} u_0\vert_{g_0}$, a bound on $\|f\|_{L^\infty}$, the Lipschitz bound on $\Psi$, a bound on $\|\Psi\|_{L^\infty}$, and the estimates $\|u-u_0\|_{L^\infty}\le C\eps$ and $\|g-g_0\|_{C^0} \le C\eps$.

Now consider $II$. Clearly, we have 
\[
II \le C  \int_U \left\vert\vert \grad^g u\vert_g^q - \vert \grad^{g_0}u_0\vert_{g_0}^q\right\vert\,dx. 
\]
Then, using the lemma we get 
\[
II \le C \int_U (\vert \grad u\vert + \vert \grad u_0\vert)^{q-1} \vert \grad u-\grad u_0\vert \, dx + C \|g-g_0\|_{C^0} \int_U \vert \grad u_0\vert^q\, dx. 
\]
Our assumptions imply that the second term is at most $C\eps$. By Cauchy-Schwarz, the first term is at most 
\[
C \|\grad u - \grad u_0\|_{L^2(\Omega)} \| (\vert \grad u\vert + \vert \grad u_0\vert)\|_{L^{2q-2}(U)}^{q-1} 
\]
and we have 
\begin{align*}
   \| (\vert \grad u\vert + \vert \grad u_0\vert)\|_{L^{2q-2}(U)}^{q-1} &\le \|(2\vert \grad u_0\vert + \vert \grad u - \grad u_0\vert)\|_{L^{2q-2}(U)}^{q-1} \\
   &\le \left( 2 \|\grad u_0\|_{L^{2q-2}(U)} + \|\grad u - \grad u_0\|_{L^{2q-2}(U)} \right)^{q-1} \le C 
\end{align*}
as well as $\|\grad u - \grad u_0\|_{L^2(U)}\le C\eps$. 
Therefore the first term in $II$ is also at most $C\eps$ and hence $II\le C\eps$. Finally, combining everything we get $I\le C\eps$, as needed. 
\end{proof}

\section{Quantification for \texorpdfstring{$C^2$}{C2} Metrics} 
\label{Section:C2}

The goal of this section is to prove the following theorem. 

\begin{theorem}
    Assume that $g$ and $g_0$ are $C^2$ metrics on the unit ball $B$ in $\R^3$.   Fix a constant $q < \frac 1 2$. Then there are constants $C$ and $\eps_0$ depending only on $g_0$ and $q$ so that 
    \[
    \inf_B (R_g - R_{g_0}) \le C\|g-g_0\|^q_{C^0}
    \]
    provided that $\|g-g_0\|_{C^0}\le \eps_0$. 
\end{theorem}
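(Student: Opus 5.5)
We argue by contradiction, following the scheme sketched in the introduction. Suppose $\delta := \inf_B(R_g - R_{g_0}) > 0$, and write $\eps = \|g-g_0\|_{C^0}$; we want $\delta \le C\eps^q$. Fix a point (say the origin) and change coordinates (Appendix \ref{Appendix:ChangeCoordinates}) so that $g_0$ is $C^1$-close to Euclidean, with $g_0(0) = g_{\text{euc}}$ and $|\partial g_0| = O(r)$ near $0$. Apply the same diffeomorphism to $g$. Let $u_0$ be a $g_0$-Green's function with pole at $0$, normalized so that $u_0 \sim 1/|x|$, and use Appendix \ref{Appendix:Green} for its asymptotics near the pole. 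Fix a small scale $a$, to be chosen as a power of $\eps$ at the end.

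Rescale $B_{Ca}$ to unit size. Then $g_0$ becomes $\eps_0$-close to Euclidean, the regions $\{1/(4a)\le u_0\le 1/a\}$ become roughly the annulus $\{1\le |x|\le 4\}$, and $\|g-g_0\|_{C^0}$ is still at most $\eps$, because $C^0$ norms are scale invariant. In these coordinates:
\begin{enumerate}[(i)]
\item we define $u$ as the $g$-harmonic function on an annular domain $\Omega$ bounded by two level sets of $u_0$, with $u = u_0$ on $\bd\Omega$;
\item we form the quantities $D_0$, $D$ (scale $a$) and $D^1_0$, $D^1$ (scale $8a$) from the introduction, with weight $f = R_{g_0}$;
\item Proposition \ref{Proposition:IntegralBound}, with $q\in\{1,3\}$ and $\Psi$ equal to $y^{-3}$ or $\psi(y,a)y^{-2}$, gives $|D-D_0|, |D^1-D^1_0| \le C\eps$ after undoing the scaling.
\end{enumerate}
The hypotheses of Proposition \ref{Proposition:IntegralBound} (gradient bounds, area bounds) hold uniformly in $a$ by the Green's function asymptotics. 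One has to track powers of $a$ carefully in the rescaling. I expect the natural normalization to give errors $C\eps$ in quantities whose curvature term is of size $a^2\delta$.

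The monotonicity step comes next. Using the Agostiniani--Mazzieri-type monotone quantity from Appendix \ref{Appendix:F}, for a $g$-harmonic function $u$ we have
\[
\frac{d}{dt}\int_{\{u=t\}} |\grad u|^2 \;\gtrsim\; \int_{\{u=t\}} R_g\,(\cdots).
\]
Subtracting the term weighted by $\psi\,R_{g_0}$ leaves $R_g - R_{g_0} \ge \delta$. Integrating in $t$ should yield \eqref{eq2}:
\[
D^1 \ge c a^2\delta - C|D-D_0|.
\]
The term $|D-D_0|$ appears because $u$ no longer has the exact Green's-function boundary data at the inner scale; this is exactly where we use that $D$ is defined with the same constant $C(a,u_0,g_0)$ as $D_0$.

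The main obstacle is the reference inequality, i.e.\ showing that $D^1_0$ is nearly zero rather than just nonnegative. For $g_0$ the monotone quantity is not constant, and its defect comes from the non-Euclidean part of $g_0$ at scale $a$. Since $g_0$ is only $C^2$, the $R_{g_0}$-weighted term cancels the curvature contribution only up to the modulus of continuity of $\partial^2 g_0$. The remaining terms, which involve traceless second fundamental form and $|\grad^2 u_0|$ type quantities, are of size $O(a^{2+\beta})$ for every $\beta<1$, or $o(a^2)$ with an explicit rate coming from the Green's function expansion. I would try to prove $D^1_0 \le C a^{2}\,\omega(a)$ with $\omega(a) = a^{s}$ for any $s<1$; the point is that the leading $a^2$ curvature term is matched by the $\psi R_{g_0}$ term.

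Combining the estimates gives
\[
c a^2\delta \le C\eps + C a^{2+s},
\qquad\text{i.e.}\qquad
\delta \le C\eps a^{-2} + C a^{s}.
\]
Optimizing with $a = \eps^{1/(2+s)}$ gives $\delta \le C\eps^{s/(2+s)}$. The exponent $s/(2+s)$ is below $1/2$ for $s<1$ but does not reach it; since the target rate is $\eps^q$ for every $q<1/2$, this suggests the scaling exponents should be $\delta \le C\eps a^{-1} + Ca$ instead. That would be the case if the $C\eps$ bounds improve to $C\eps a$ after rescaling, since the integrals scale like $a$ while the curvature term scales like $a^2$. The optimization is then $a=\eps^{1/2}$ up to the loss from $C^2$ regularity, giving $\delta\le C\eps^{q}$ for any $q<1/2$. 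Pinning down these exact scaling powers, and the $o(a^2)$ defect of the monotonicity formula for a merely $C^2$ metric, is the step I expect to be hardest.
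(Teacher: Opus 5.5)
Your overall architecture matches the paper's: compare $D$ with $D_0$ at scale $a$ via Proposition \ref{Proposition:IntegralBound}, use $D_0\ge 0$ and the monotonicity of $\widetilde F$ to carry a lower bound to scale $8a$, then compare $D^1$ with $D^1_0$. But the proposal breaks down at the step you flag as hardest, the size of $D^1_0$, and the repair you suggest is incorrect.

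\textbf{The size of the reference defect.} With the weight $f(x)=R_{g_0}(x)$ taken pointwise, the scalar curvature cancels \emph{exactly} in $\widetilde F_0'$; see \eqref{eq:2}. No modulus of continuity of $\partial^2 g_0$ enters, because the statement only involves $R_g-R_{g_0}$. What remains after Gauss--Bonnet is a sum of squares:
\[
\widetilde F_0'(t)=\int_{\{u_0=\frac1t\}}\Big[\frac{|\grad^{\Sigma_t}|\grad u_0||^2}{|\grad u_0|^2}+\frac{|\mathring A|^2}{2}+\frac34\Big(\frac{2|\grad u_0|}{u_0}-H\Big)^2\Big]\,da .
\]
Each quantity being squared is $O(t^{1-\tau})$ by the Green's function expansion. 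Here one uses $g_0=\delta+O(|x|^2)$ after the coordinate change and $|\grad^2 e|\le C|x|^{-1-\tau}$. So the integrand is $O(t^{2-2\tau})$, and integrating over a level set of area $\sim t^2$ gives $\widetilde F_0'(t)=O(t^{4-2\tau})$ (Proposition \ref{Proposition:DerivativeEstimate}). Since $\widetilde F_0(0^+)=0$, it follows that $\widetilde F_0(t)=O(t^{5-2\tau})$ and hence $D^1_0=O(a^{4-2\tau})$.

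You treated the defect as first order in the deviation from the Euclidean model, which is why you only get $D^1_0\lesssim a^{2+s}$ with $s<1$. It is in fact second order: two full powers of $a$ beyond the curvature scale $a^2$, not one. With your estimate the argument only yields $\delta\le C\eps^{s/(2+s)}$, which is worse than $\eps^{1/3}$ and does not prove the theorem.

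\textbf{The proposed fix.} You suggest that $|D-D_0|$ and $|D^1-D^1_0|$ improve to $C\eps a$; this is false. Under $w(x)=au(ax)$ the integrals $\int|\grad u|^3u^{-3}\,dv$ are scale invariant, as is the constant $2\pi\ln 2$. Only the $\psi f$ term picks up a factor $a^2$. So Proposition \ref{Proposition:IntegralBound} gives $|D-D_0|,\,|D^1-D^1_0|\le C\eps$ with $C$ independent of $a$, and there is no extra factor of $a$ to gain.

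The correct bookkeeping is
\[
c\,a^2\delta\le C\eps+Ca^{4-2\tau}.
\]
Choosing $a=\eps^{1/4}$ (not $a=\eps^{1/2}$) gives $\delta\le C\eps^{1/2}+C\eps^{(1-\tau)/2}$, which is $\le C\eps^{q}$ once $\tau$ is chosen small depending on $q$. Your heuristic final rate comes out right only because two scalings, the error term and the defect, are each off by a factor of $a$ in a way that cancels.
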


By a preliminary change of coordinates, we can always assume that $(g_0)_{ij}(0) = \delta_{ij}$ and that $\bd_k (g_0)_{ij}(0) = 0$; see Appendix \ref{Appendix:ChangeCoordinates}.
As a first step, let $\Gamma$ be the Green's function for $\lap_{g_0}$ on $B$ as in Theorem \ref{Theorem:Green}.  Write $\Gamma(x) = \frac{1}{\vert x\vert} + e(x)$ and then set $u_0(x) = \Gamma(x) - e(0)$.  Then $u_0$ is $g_0$-harmonic and has an expansion 
\[
u_0(x) = \frac{1}{\vert x\vert} + O_2(\vert x\vert^{1-\tau})
\]
for any $\tau > 0$.  We fix a small $\tau > 0$ depending on $q$.  Explicitly, select $\tau$ so that $q = \frac 1 2 - 2\tau$.

We will work with a modification of the $F$ functional introduced in \cite{agostiniani2024green}; see Appendix  \ref{Appendix:F} for background on the $F$ functional.   Define the continuous function $f(x) = R_{g_0}(x)$. Define the functional 
\[
\widetilde F_0(t) = 4\pi t - t^2 \int_{\{u_0=\frac 1 t\}} H \vert \grad u_0\vert \,da + t^3 \int_{\{u_0=\frac 1 t\}} \vert \grad u_0\vert^2\, da - \frac 1 2 \int_{\{\frac 1 t \le u_0 \}} f(x) \frac{\vert \grad u_0\vert}{u_0^2} \, dv
\]
for small positive $t$.  All the geometric quantities in this formula are computed with respect to the $g_0$ metric. Note that 
\[
\frac{d}{dt} \int_{\{u_0=\frac 1 t\}} \vert \grad u_0\vert^2\, da  = -t^{-2}\int_{\{u_0=\frac 1 t\}} H\vert \grad u_0\vert\, da. 
\]

\begin{rem}
    Note that the Green's function we are using has a sign and constant difference from \cite{agostiniani2024green}. As a result, we have a sign difference in the mean curvature of each regular level set:
    \begin{align*}
        H=-\div\left(\frac{\nabla u}{|\nabla u|}\right).
    \end{align*}
\end{rem}

In particular, it follows that 
\begin{align}\label{eq:W derivative identity}
-t^{-1}\int_{\{u_0=\frac 1 t\}} H\vert \grad u_0\vert\, da = \frac{d}{dt}\left(t \int_{\{u_0=\frac 1 t\}} \vert \grad u_0\vert^2\, da \right) - \int_{\{u_0=\frac 1 t\}} \vert \grad u_0\vert^2\, da. 
\end{align}
It follows immediately from Theorem \ref{Theorem:Derivative} in Appendix \ref{Appendix:F} and the definition of $f(x)$ that 
\begin{align}\label{eq: first derivative}
\widetilde F_0'(t) &= \int_{\{u_0=\frac 1  t\}} \left[\frac{R_{g_0}}{2} + \frac{\vert \grad^{\Sigma_t} \vert \grad u_0\vert \vert^2}{\vert \grad u_0\vert^2} + \frac{\vert \mathring A\vert^2}{2} + \frac 3 4 \left(\frac{2 \vert \grad u_0\vert}{u_0}-H\right)^2 - \frac{f(x)}{2} \right]\, da\\
&= \int_{\{u_0=\frac 1  t\}} \left[\frac{\vert \grad^{\Sigma_t} \vert \grad u_0\vert \vert^2}{\vert \grad u_0\vert^2} + \frac{\vert \mathring A\vert^2}{2} + \frac 3 4 \left(\frac{2 \vert \grad u_0\vert}{u_0}-H\right)^2\right]\, da.
\label{eq:2}
\end{align} 
One has the following result from \cite{agostiniani2024green}.

\begin{prop}
\label{Proposition:DerivativeEstimate}
There is an expansion 
\[
    \int_{\{u_0=\frac 1  t\}} \left[\frac{\vert \grad^{\Sigma_t} \vert \grad u_0\vert \vert^2}{\vert \grad u_0\vert^2} + \frac{\vert \mathring A\vert^2}{2} + \frac 3 4 \left(\frac{2 \vert \grad u_0\vert}{u_0}-H\right)^2\right]\, da = O(t^{4-2\tau})
\]
as $t\to 0^+$. 
\end{prop}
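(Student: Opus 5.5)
We will prove the expansion by direct asymptotic computation in the coordinates near the pole, using the expansion $u_0(x)=\frac1{|x|}+O_2(|x|^{1-\tau})$ and the normalization $(g_0)_{ij}(0)=\delta_{ij}$, $\partial_k (g_0)_{ij}(0)=0$. The level set $\Sigma_t=\{u_0=\frac1t\}$ lies at distance comparable to $t$ from the origin. On it the area is $O(t^2)$, so it suffices to show that the integrand is pointwise $O(t^{2-2\tau})$. Equivalently, each of the three quantities
\[
\frac{|\grad^{\Sigma_t}|\grad u_0||}{|\grad u_0|},\qquad |\mathring A|,\qquad \frac{2|\grad u_0|}{u_0}-H
\]
should be $O(t^{1-\tau})$. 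For the model $u=\frac1{|x|}$ in the Euclidean metric all three vanish identically, since the level sets are round spheres, $|\grad u|$ is constant on them, and $H=2/r=2|\grad u|/u$. So the plan is to show that every deviation from the model is of relative size $O(r^{2-\tau})$ at scale $r\sim t$. Each quantity has natural size $1/r$, so this gives deviations of absolute size $O(r^{1-\tau})$.

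\textbf{Step 1: derivatives of $u_0$.} From the $O_2$ expansion we have
\[
\grad u_0=\grad\tfrac1{|x|}+O(r^{-\tau}),\qquad \grad^2u_0=\grad^2\tfrac1{|x|}+O(r^{-1-\tau}),
\]
with Euclidean derivatives. The model terms have sizes $r^{-2}$ and $r^{-3}$, so the relative errors are $O(r^{2-\tau})$.

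\textbf{Step 2: metric corrections.} Since $g_0-\delta=O(r^2)$ and $\partial g_0=O(r)$, we get $|\grad u_0|_{g_0}=|\grad u_0|(1+O(r^2))$. The Hessian satisfies $\grad^2_{g_0}u_0=\partial^2u_0-\Gamma\ast\partial u_0$, where $\Gamma\ast\partial u_0=O(r\cdot r^{-2})=O(r^{-1})$. This is again relative size $r^2$ compared with $r^{-3}$. This is exactly where the first-order normalization of the coordinates is used: if only $g_0(0)=\delta$ were imposed, we would have $\Gamma=O(1)$ and lose a full power of $r$.

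\textbf{Step 3: express the three quantities through $\grad u_0$ and $\grad^2u_0$.} Write
\[
\nu=\frac{\grad u_0}{|\grad u_0|},\qquad A=\frac{\grad^2u_0|_{T\Sigma}}{|\grad u_0|},\qquad \grad^{\Sigma}|\grad u_0|=\frac{\grad^2u_0(\nu,\cdot)^{T}}{\ldots},
\]
with the last identity read as the tangential part of $\grad^2u_0(\nu,\cdot)$, up to the normalizing factor. Each quantity is a smooth algebraic expression in these ingredients that vanishes on the model data. Substituting the model plus the errors of relative size $O(r^{2-\tau})$ then shows each quantity is $O(r^{-1}\cdot r^{2-\tau})=O(r^{1-\tau})$. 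For $\frac{2|\grad u_0|}{u_0}-H$ we also use $u_0^{-1}=r(1+O(r^{2-\tau}))$, which holds because $u_0-\frac1r=O(r^{1-\tau})$. Squaring gives the pointwise bound $O(t^{2-2\tau})$, and multiplying by the area $O(t^2)$ gives $O(t^{4-2\tau})$.

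\textbf{Main obstacle.} The main care is in Step 3: tracking that the errors enter only at relative order $r^{2-\tau}$, and in particular that the zeroth- and first-order cancellations really hold. These rest on the vanishing of $\partial g_0(0)$ and on $e$ having been subtracted through $e(0)$. We also need $\Sigma_t$ to be a smooth graph over a sphere of radius $\sim t$. This follows from Step 1, since $|\grad u_0|\ge c\,r^{-2}$ near the pole.
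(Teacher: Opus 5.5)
Your proof is correct, but it does not follow the paper. The paper gives no computation here: it cites equation (3.18) in the proof of Theorem 3.1 of \cite{agostiniani2024green} and asserts that the smooth argument there carries over to $C^2$ metrics.

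You instead give a self-contained pointwise argument. It uses only the normalization $g_0(0)=\delta$, $\partial g_0(0)=0$ from Appendix~\ref{Appendix:ChangeCoordinates} and the estimates on $e$ from Theorem~\ref{Theorem:Green}. Made precise, your argument rescales by $\tilde u(y)=r\,u_0(ry)$. The data $(\grad\tilde u,\grad^2\tilde u,\tilde g,\tilde\Gamma)$ then lie within $O(r^{2-\tau})$ of the model data $(\grad|y|^{-1},\grad^2|y|^{-1},\delta,0)$, in a region where $|\grad\tilde u|$ is bounded below. Each of the three quantities is a Lipschitz function of these data that vanishes on the model, so each is $O(r^{1-\tau})$ at the original scale. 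Together with an $O(t^2)$ area bound this gives the claim.

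What your route buys:
\begin{itemize}
\item It actually justifies the $C^2$ extension that the paper only asserts.
\item It yields the stronger pointwise bound $O(t^{2-2\tau})$ on the integrand.
\item It shows exactly where $\partial g_0(0)=0$ is used (it makes the Christoffel symbols $O(r)$).
\item It shows where the subtraction of $e(0)$ is used: it gives $u_0^{-1}=r(1+O(r^{2-\tau}))$, without which $\frac{2|\grad u_0|}{u_0}-H$ is only $O(1)$.
\end{itemize}
The paper's citation is shorter but leaves all of this to the reader.

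A few points to tighten:
\begin{itemize}
\item One has $\grad|\grad u_0|=\grad^2u_0(\nu,\cdot)$ exactly, so no normalizing factor is needed.
\item The bound $|\grad u_0|\ge c\,r^{-2}$ by itself does not make $\Sigma_t$ a radial graph. Either use $\partial_r u_0=-r^{-2}+O(r^{-\tau})<0$, or avoid the graph property: the flux identity $\int_{\Sigma_t}|\grad u_0|\,da=4\pi$ together with $|\grad u_0|\ge c\,t^{-2}$ gives $\area(\Sigma_t)\le Ct^2$ directly.
\item You should say why the whole level set $\{u_0=1/t\}$ lies in the region where the expansion holds. The reason is that $u_0$ is bounded on $B\setminus B_\rho$ for each fixed $\rho>0$.
\item For $H$, state the formula you use. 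Harmonicity gives $H=\grad^2u_0(\nu,\nu)/|\grad u_0|$; alternatively use the tangential trace of $\grad^2 u_0/|\grad u_0|$.
\end{itemize}
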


\begin{proof}
    This is equation (3.18) in the proof of Theorem 3.1 in \cite{agostiniani2024green}.  Although \cite[Theorem 3.1]{agostiniani2024green} is written for smooth metrics, the argument to get (3.18) can be carried out assuming the metric is just $C^2$.
    \end{proof}

\begin{corollary}
\label{Corollary:Small-Scale}
    One has $\widetilde F_0(t) = O(t^{5-2\tau})$ as $t\to 0^+$. 
\end{corollary}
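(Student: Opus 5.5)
The plan is to integrate the derivative estimate of Proposition \ref{Proposition:DerivativeEstimate} from $0$ to $t$. By \eqref{eq:2} and Proposition \ref{Proposition:DerivativeEstimate}, we have $\widetilde F_0'(s) = O(s^{4-2\tau})$ as $s\to 0^+$. Hence
\[
\widetilde F_0(t) = \lim_{s\to 0^+}\widetilde F_0(s) + \int_0^t \widetilde F_0'(s)\,ds = \lim_{s\to 0^+}\widetilde F_0(s) + O(t^{5-2\tau}),
\]
provided the limit exists. The integrability near $0$ is clear since $4-2\tau > -1$. So the whole task reduces to showing that $\widetilde F_0(s)\to 0$ as $s\to 0^+$.

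To compute this limit, I will use the asymptotics $u_0 = \frac{1}{|x|} + O_2(|x|^{1-\tau})$ together with the normalization $(g_0)_{ij}(0)=\delta_{ij}$, $\partial_k (g_0)_{ij}(0)=0$. These give $g_0 = g_{\text{euc}} + O(|x|^2)$ and $dv, da$ equal to their Euclidean counterparts up to a factor $1+O(|x|^2)$. On the level set $\{u_0 = 1/s\}$ we have $|x| = s(1+O(s^{2-\tau}))$. Moreover $|\grad u_0| = |x|^{-2}(1+O(|x|^{2-\tau}))$, using $\grad(O_2(|x|^{1-\tau})) = O(|x|^{-\tau})$. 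Likewise $H = \frac{2}{|x|}(1+O(|x|^{2-\tau}))$, using second derivatives $O(|x|^{-1-\tau})$ and the Christoffel symbols $O(|x|)$. The area of the level set is $4\pi s^2(1+O(s^{2-\tau}))$.

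Substituting these expansions into the boundary terms gives
\[
s^2\int H|\grad u_0|\,da = s^2\cdot \tfrac{2}{s}\cdot s^{-2}\cdot 4\pi s^2\,(1+O(s^{2-\tau})) = 8\pi s + O(s^{3-\tau}),
\]
and similarly $s^3\int |\grad u_0|^2\,da = 4\pi s + O(s^{3-\tau})$. So $4\pi s - 8\pi s + 4\pi s = O(s^{3-\tau})\to 0$. For the bulk term, $f=R_{g_0}$ is bounded and $|\grad u_0|/u_0^2 \approx 1$ on $\{u_0\ge 1/s\}\approx B_s$. That region has volume $O(s^3)$, so the bulk term is $O(s^3)\to 0$. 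Hence $\lim_{s\to0^+}\widetilde F_0(s)=0$, and the corollary follows.

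The main obstacle is the care needed in the error terms with only $C^2$ regularity. One must verify that the $O_2$ expansion (from Theorem \ref{Theorem:Green}) really gives the stated control of $H$ and $|\grad u_0|$ uniformly on level sets, and that for small $s$ the level sets are smooth spheres (since $\grad u_0\neq 0$ near the pole). One also needs $\widetilde F_0$ to be absolutely continuous so that the fundamental theorem of calculus applies. This holds because the level sets are regular near the pole, so Theorem \ref{Theorem:Derivative} gives a genuine derivative there.
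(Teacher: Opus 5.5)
Your proposal is correct and follows the paper's argument: integrate the $O(t^{4-2\tau})$ bound on $\widetilde F_0'$ from Proposition~\ref{Proposition:DerivativeEstimate}, and use the fact that $\widetilde F_0(t)\to 0$ as $t\to 0^+$. The paper simply asserts this limit, whereas you actually verify it (obtaining $\widetilde F_0(s)=O(s^{3-\tau})$) from the Green's function asymptotics, which is a welcome addition.
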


\begin{proof}
    This follows immediately from the previous proposition together with the fact that $\widetilde F_0(t) \to 0$ as $t\to 0^+$.
\end{proof}

Now fix a small scale $a > 0$. Explicitly, we will take $a = \eps^{1/4}$ where $\eps = \|g-g_0\|_{C^0}$. We assume that the scale $a$ is small enough to ensure that $g_0$ is sufficiently $C^0$ close to Euclidean. We further suppose that $a$ is small enough to ensure that all level sets $\{u_0 = \frac 1 t\}$ with $t\le 100a$ are spheres.  Both of these will be true for $a = \eps^{1/4}$ provided $\eps_0$ is small enough.

 We now integrate to get a quantity which is continuous under $C^0$ perturbations of the metric.  
 For convenience, write 
\[
 M(a) = \int_{\{\frac 1 a \le u_0\}} f(x) \frac{\vert \grad u_0\vert}{u_0^2}\, dv. 
\]
For $s \in [0,a]$, define  
\begin{align*}
E_0(s) &= \int_{a+s}^{2a+2s} \frac{\widetilde F_0(t)}{t^3}\, dt\\
&= \int_{a+s}^{2a+2s} \bigg[\frac{4\pi}{t^2} - \frac{1}{t} \int_{\{u_0=\frac 1 t\}} H\vert \grad u_0\vert\, da + \int_{\{u_0=\frac 1 t\}} \vert \grad u_0\vert^2 \,da \\
&\qquad \qquad - \frac 1 {2t^3} \int_{\{\frac 1 t \le u_0 \le \frac 1 a \}} f(x) \frac{\vert\grad u_0\vert}{u_0^2}\, dv  - \frac {M(a)} {2t^3}  \bigg] \, dt\\
&= \frac{2\pi}{a+s} +  \int_{\{u_0 = \frac{1}{2a+2s}\}} \frac{\vert \grad u_0\vert^2}{u_0}\, da - \int_{\{u_0 = \frac{1}{a+s}\}} \frac{\vert \grad u_0\vert^2}{u_0} \, da \\
&\qquad \qquad - \frac {3M(a)} {16(a+s)^2} - \int \phi(u_0,a,s) f(x) \frac{\vert \grad u_0\vert }{u_0^2}\, dv
\end{align*}
where $\phi(y,a,s)$ is a  function obtained using Fubini's theorem combined with the co-area formula, and we also used \eqref{eq:W derivative identity} in the second equality. Explicitly, we have 
\[
\phi(y,a,s) = \begin{cases}
    \frac 1 4(a+s)^{-2} - \frac 1 4(2a+2s)^{-2},&\text{if } (a+s)^{-1}\le y \le a^{-1}\\
    \frac 1 4 y^2 - \frac 1 4 (2a+2s)^{-2}, &\text{if } (2a+2s)^{-1}\le y \le (a+s)^{-1},\\
    0, &\text{otherwise}. 
\end{cases}
\]
Now we integrate one more time and define 
\begin{align*}
D_0 &= \int_0^{a} E_0(s)\, ds \\
&= 2\pi \ln 2 + \frac 1 2 \int_{\{\frac 1{4a}\le u_0 \le \frac{1}{2a}\}} \frac{\vert \grad u_0\vert^3}{u_0^3}\, dv - \int_{\{\frac{1}{2a} \le u_0 \le \frac 1 a\}} \frac{\vert \grad u_0\vert^3}{u_0^3}\, dv\\
&\qquad\qquad - \frac{3M(a)}{32a} - \int_{\{\frac 1 {4a} \le u_0 \le \frac 1 {a}\}} \psi(u_0,a) f(x) \frac{\vert \grad u_0\vert}{u_0^2}\, dv.
\end{align*}
Here, for each fixed $a$, the function $\psi(y,a)$ is a continuous function of $y\in [\frac 1 a, \frac 1 {4a}]$ computed using Fubini's theorem. Explicitly, we have 
\[
\psi(y,a) =\left\{ \begin{matrix}
        \frac a 2 y^2-\frac{1}{4}y+\frac{1}{32 a}, & \text{ if } \frac{1}{4a}\leq y\leq \frac{1}{2a}\\
        \\
        -\frac{a}{4}y^2+\frac{1}{2}y-\frac{5}{32a}, & \text{ if } \frac{1}{2a}\leq y\leq \frac{1}{a}
    \end{matrix}\right.
\]
We now pause to record the following scaling property of $\psi$. 

\begin{prop}
    The function $\psi$ satisfies 
    \[
    \psi\left(\frac{y}{a},a\right) = a^{-1}\psi(y,1). 
    \]
\end{prop}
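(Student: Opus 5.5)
The plan is to verify the identity directly from the explicit piecewise formula for $\psi$, branch by branch. First we check that the domains correspond. The point $y/a$ lies in $[\frac{1}{4a},\frac{1}{2a}]$ exactly when $y\in[\frac14,\frac12]$. Likewise, $y/a$ lies in $[\frac{1}{2a},\frac1a]$ exactly when $y\in[\frac12,1]$. So the same branch of $\psi$ is used on both sides of the claimed identity, and it suffices to compare the two quadratic expressions on each branch.

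On the first branch we substitute $y/a$ for $y$:
\[
\psi\left(\tfrac ya,a\right)=\frac a2\cdot\frac{y^2}{a^2}-\frac14\cdot\frac ya+\frac1{32a}=a^{-1}\left(\frac12y^2-\frac14y+\frac1{32}\right)=a^{-1}\psi(y,1).
\]
On the second branch the same substitution gives
\[
\psi\left(\tfrac ya,a\right)=-\frac a4\cdot\frac{y^2}{a^2}+\frac12\cdot\frac ya-\frac5{32a}=a^{-1}\left(-\frac14y^2+\frac12y-\frac5{32}\right)=a^{-1}\psi(y,1).
\]

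At the common endpoint $y=\frac12$ both branches agree, since $\psi$ is continuous: each gives the value $\frac1{32}$ for $a=1$. Hence the identity holds for all $y\in[\frac14,1]$.

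There is essentially no obstacle; the only care needed is matching the intervals. As a sanity check, the statement is also consistent with the structure of the construction. $\psi$ arises from integrating the homogeneous weights $t^{-3}$ in $t$ and then integrating in $s$ over $[0,a]$, so $\psi$ must be homogeneous of degree $1$ in the variables $(y,a^{-1})$. One could equally derive the identity this way, by the change of variables $t\mapsto at$, $s\mapsto as$ in the formula for $\phi(y,a,s)$, which yields $\phi(y/a,a,as)=a^{-2}\phi(y,1,s)$. Integrating in $s$ then produces one extra factor of $a$, giving the same result, $a^{-1}\psi(y,1)$.
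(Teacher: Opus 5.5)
Your proposal is correct and matches the paper, whose proof simply states that the identity is immediate from the explicit formula for $\psi$. Your branch-by-branch substitution, together with the check that $y/a\in[\frac{1}{4a},\frac{1}{2a}]$ (resp. $[\frac{1}{2a},\frac1a]$) exactly when $y\in[\frac14,\frac12]$ (resp. $[\frac12,1]$), is precisely that verification. Your alternative derivation via $\phi(y/a,a,as)=a^{-2}\phi(y,1,s)$ and $\psi(y,a)=\int_0^a\phi(y,a,s)\,ds$ is also valid.
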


\begin{proof}
    This is immediate from the formula for $\psi$. 
\end{proof}

We now want to do essentially the same thing in the $g$ metric.  Recall that we are working at a  small scale $a$.  It is technically more convenient to avoid working with the Green's function in the $g$ metrics.  Instead, we find a $g$-harmonic function which agrees with $u_0$ on the boundary of a small annulus at scale $a$.  To that end, define $\Omega_a = \{u_0 \ge \frac{1}{75a}\} - \{u_0 \ge \frac{75}{a}\}$.    This is a smooth domain diffeomorphic to an annulus whose boundary consists of the spheres $\{u_0 = \frac{1}{75a}\}$ and $\{u_0 = \frac {75} a\}$.  Define $u$ to be the $g$-harmonic function that satisfies $u = u_0$ on $\bd \Omega_a$.  A straightforward argument using the maximum principle shows that every regular level set of $u$ is a closed and connected. 

Now we define similar quantities for the functions $u$.  Clearly, we can suppose that $\inf_B (R_g - R_{g_0}) > 0$ as otherwise there is nothing to prove.  Now define 
\[
\widetilde F(t) = 4\pi t - t^2 \int_{\{u = \frac 1 t\}} H \vert \grad u\vert \, da + t^3 \int_{\{u = \frac{1}{t}\}} \vert \grad u\vert^2 \, da - \frac 1 2\int_{\{\frac 1 t \le u \le \frac 1 a\}} f(x) \frac{\vert \grad u\vert}{u^2}\, dv -\frac{M(a)}{2}
\]
for $t\in (\frac a {75}, 75 a)$. Here all of the geometric quantities are computed with respect to the $g$ metric, and $M(a)$ is the same quantity from above involving $u_0$, and  $f(x) = R_{g_0}(x)$.   Then we integrate as before and define 
\begin{align*}
E(s) &= \int_{a+s}^{2a+2s} \frac{\widetilde F(t)}{t^3}\, dt \\
&= \frac{2\pi}{a+s} + \int_{\{u = \frac{1}{2a+2s}\}} \frac{\vert \grad u \vert^2}{u}\, da - \int_{\{u = \frac{1}{a+s}\}} \frac{\vert \grad u\vert^2}{u} \, da - \frac {3M(a)}{16(a+s)^2} - \int \phi(u,a,s) f(x) \frac{\vert \grad u\vert}{u^2}\, dv, 
\end{align*}
as well as 
\begin{align*}
D &= \int_0^a E(s)\, ds \\
&= 2\pi \ln 2 + \frac 1 2 \int_{\{\frac 1{4a}\le u \le \frac{1}{2a}\}} \frac{\vert \grad u \vert^3}{u^3}\, dv - \int_{\{\frac{1}{2a} \le u \le \frac 1 a\}} \frac{\vert \grad u\vert^3}{u^3}\, dv\\
&\qquad \qquad - \frac{3M(a)}{32a} - \int_{\{\frac 1 {4a} \le u \le \frac{1}{a}\}} \psi(u,a) f(x) \frac{\vert \grad u\vert}{u^2}\, dv. 
\end{align*}
Next, we want to estimate the difference between $D$ and $D_0$.

For convenience, we define $\eps = \|g-g_0\|_{C^0}$. 

\begin{prop}\label{prop: D-D0}
    There is an estimate $\vert D - D_0\vert \le C \eps$ where $C$ does not depend on $a$. 
\end{prop}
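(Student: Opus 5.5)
The plan is to rescale to unit size and then apply Proposition \ref{Proposition:IntegralBound} term by term. The constant $2\pi\ln 2$ and the term $\frac{3M(a)}{32a}$ appear identically in $D$ and $D_0$, so they cancel. Only the three bulk integrals need to be compared, and the aim is to show that each pair differs by at most $C\eps$ with $C$ independent of $a$.

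\textbf{Rescaling.} Define $\hat g_0(y) = a^{-2}g_0(ay)$ and $\hat g(y) = a^{-2} g(ay)$, together with $\hat u_0(y) = a\,u_0(ay)$ and $\hat u(y) = a\,u(ay)$. Then:
\begin{itemize}
\item $\hat u_0$ is $\hat g_0$-harmonic and $\hat u$ is $\hat g$-harmonic, and they agree on the boundary of $\hat\Omega = a^{-1}\Omega_a = \{\frac{1}{75}\le \hat u_0\le 75\}$;
\item $\|\hat g - \hat g_0\|_{C^0} = \|g-g_0\|_{C^0}=\eps$, since the $C^0$ norm is scale invariant;
\item $\hat g_0$ is $C^0$-close to Euclidean because $a$ is small.
\end{itemize}

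\textbf{Hypotheses of Proposition \ref{Proposition:IntegralBound}.} By the expansion $u_0 = |x|^{-1}+O_2(|x|^{1-\tau})$ we get $\hat u_0(y) = |y|^{-1} + O_2(a^{2-\tau})$ on the relevant region, in the $C^2$ sense, after rescaling. Hence $\hat\Omega$ is squeezed between $B_{50}-B_{1/50}$ and $B_{100}-B_{1/100}$. On the compact region $\{\frac14\le \hat u_0\le 1\}$ we also have $C_1\le|\grad \hat u_0|\le C_2$ and a bound $C_3$ on the area of its level sets, all uniform in $a$. Section \ref{Section:PDE-Estimate} then applies with constants independent of $a$. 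In particular $\|\hat u-\hat u_0\|_{L^\infty(\Omega')}\le C\eps$, which places $U=\{\frac14\le\hat u\le 1\}$ and $U_0$ well inside $\Omega'=B_{40}-B_{1/2}$, at distance at least $1/10$ from its boundary.

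\textbf{Scaling of the integrands.} Under the rescaling,
\begin{itemize}
\item $\frac{|\grad u|^3}{u^3}\,dv$ is scale invariant: $|\grad u|_g = a^{-2}|\grad \hat u|_{\hat g}$, $u=a^{-1}\hat u$, and $dv=a^3d\hat v$;
\item for the $\psi$ term, $\psi(u,a)=a^{-1}\psi(\hat u,1)$ by the scaling proposition, and $\frac{|\grad u|}{u^2}dv = a^{2}\,\frac{|\grad\hat u|}{\hat u^2}d\hat v$.
\end{itemize}
So the $\psi$ term becomes $a\int \psi(\hat u,1) f(ay)\frac{|\grad \hat u|}{\hat u^2}d\hat v$. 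This carries an extra factor $a\le1$, which only helps.

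\textbf{Applying the proposition.} Apply Proposition \ref{Proposition:IntegralBound} in the rescaled picture to each of the three terms:
\begin{itemize}
\item for the two cubic terms, take $q=3$, $f\equiv 1$, and the Lipschitz function $\Psi(y)=y^{-3}$ (restricted to $[\frac18,2]$ and suitably extended);
\item for the $\psi$ term, take $q=1$, $\Psi(y)=\psi(y,1)y^{-2}$, and the bounded continuous function $f(ay)=R_{g_0}(ay)$, with $\|f\|_{L^\infty}\le\sup_B|R_{g_0}|$.
\end{itemize}
Here $\psi(\cdot,1)$ is continuous and piecewise quadratic, hence Lipschitz on $[\frac14,1]$. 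It vanishes at $y=\frac14$ (check: $\frac1{32}-\frac1{16}+\frac1{32}=0$), and at $y=1$ it equals $-\frac14+\frac12-\frac5{32}=\frac3{32}$. We therefore extend it to a Lipschitz function, and the indicator of the domain is absorbed by the set $U$ in the proposition.

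The main point requiring care is that every constant fed into Proposition \ref{Proposition:IntegralBound} is independent of $a$. These are $C_1,C_2,C_3$, the Meyers constants (which need the ellipticity ratio near $1$, and this is guaranteed since both $\hat g$ and $\hat g_0$ are $C^0$-close to Euclidean), and $\|\grad\hat u_0\|_{L^p}$. All of them follow from the uniform closeness of $\hat u_0$ to $|y|^{-1}$ for small $a$. With that established, summing the three bounds gives $|D-D_0|\le C\eps$.
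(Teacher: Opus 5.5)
Your argument is correct and is essentially the paper's proof. You rescale by $a$, note that the constant and $M(a)$ terms cancel, and observe that the cubic terms are scale invariant while the $\psi$-term picks up a positive power of $a$ via $\psi(y/a,a)=a^{-1}\psi(y,1)$; you then apply Proposition \ref{Proposition:IntegralBound} with constants uniform in $a$, because the rescaled $u_0$ converges to $\vert y\vert^{-1}$ in $C^1$. One harmless slip: $\frac{\vert \nabla u\vert}{u^2}\,dv$ scales like $a^3$, not $a^2$, so the $\psi$-term carries the factor $a^2$ (as in the paper), which only helps further.
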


\begin{proof}
We rescale everything to unit size.  In particular, define $\Omega = a^{-1} \Omega_a$.
Then define $w(x) = a u(ax)$ and $w_0(x) = a u_0(ax)$ and $h_{ij}(x) = g_{ij}(ax)$ and $(h_0)_{ij}(x) = (g_0)_{ij}(ax)$. Then $w$ is $h$-harmonic on $\Omega$ and $w_0$ is $h_0$-harmonic on $\Omega$ and $w = w_0$ on $\bd \Omega$ and $\|h - h_0\|_{C^0} \le \eps$ and $\|h_0 - g_{\text{euc}}\|_{C^0}$ will be small by the choice of $a$.  Further let $\Omega' = B_{40}-B_{1/2}$ and note that $\Omega'$ and $\Omega$ are as in Section \ref{Section:PDE-Estimate}.
    
    Observe that 
    \begin{gather*}
     \int_{\{\frac 1{4a}\le u \le \frac{1}{2a}\}} \frac{\vert \grad^g u \vert^3}{u^3}\, dv_g = \int_{\{\frac 1 4 \le w \le \frac 1 2 \}} \frac{\vert \grad^h w\vert^3}{w^3}\, dv_h,\\
     \int_{\{\frac 1{2a}\le u \le \frac{1}{a}\}} \frac{\vert \grad^g u \vert^3}{u^3}\, dv_g = \int_{\{\frac 1 2 \le w \le 1\}} \frac{\vert \grad^h w\vert^3}{w^3}\, dv_h,\\
     \int_{\{\frac 1 {4a} \le u \le \frac 1 a\}} \psi(u(x),a) f(x) \frac{\vert \grad^g u\vert}{u^2}\, dv_g = a^2 \int_{\{\frac 1 4 \le w \le 1\}} \psi(w(x),1) f(ax) \frac{\vert \grad^h u\vert}{u^2}\, dv_h 
    \end{gather*}
and likewise for the quantities defined in terms of $u_0$, $g_0$, $w_0$, and $h_0$. On the third line, we have used the scaling property of $\psi$. 

Therefore, to prove the proposition, it suffices to show that 
\begin{gather*}
    \left\vert  \int_{\{\frac 1 4 \le w \le \frac 1 2\}} \frac{\vert \grad^h w\vert^3}{w^3}\, dv_h  - \int_{\{\frac 1 4 \le w_0 \le \frac 1 2\}} \frac{\vert \grad^{h_0} w_0\vert^3}{w_0^3}\, dv_{h_0}\right\vert \le C\eps,\\
    \left\vert  \int_{\{\frac 1 2 \le w \le 1\}} \frac{\vert \grad^h w\vert^3}{w^3}\, dv_h  - \int_{\{\frac 1 2\le w_0 \le 1\}} \frac{\vert \grad^{h_0} w_0\vert^3}{w_0^3}\, dv_{h_0}\right\vert \le C\eps,\\
    \left\vert \int_{\{\frac 1 4 \le w \le 1\}} \psi(w(x),1) f(ax) \frac{\vert \grad^h u\vert}{u^2}\, dv_h  - \int_{\{\frac 1 4 \le w_0 \le 1\}} \psi(w_0(x),1) f(ax) \frac{\vert \grad^{h_0} u_0\vert}{u_0^2}\, dv_{h_0} \right\vert \le C\eps. 
\end{gather*}
This will follow from the PDE estimates in Section \ref{Section:PDE-Estimate}. We just need to check that the hypotheses of Proposition \ref{Proposition:IntegralBound} are satisfied (with $u$ replaced by $w$ and $u_0$ replaced by $w_0$ and $g$ replaced by $h$ and $g_0$ replaced by $h_0$). 
To see this, it suffices to note that the asymptotics  of the Green's function from Theorem \ref{Theorem:Green} imply $w_0$ is converging to $\vert x\vert^{-1}$ in $C^1(\Omega)$ as $a\to 0$.   All three of the above inequalities now follow from Proposition \ref{Proposition:IntegralBound}.   
\end{proof}

From the derivative formula \eqref{eq:2}, we see that $D_0 \ge 0$.  Hence the previous proposition implies that 
\[
D \ge -C\eps
\]
for a constant $C$ that does not depend on $\eps$ or $a$.  
Now there must be a point $s_1\in [0,a]$ such that 
\[
E(s_1) = a^{-1}D. 
\]
Likewise, there must  be a point $t_1 \in [a+s_1, 2a + 2s_1]$ for which 
\[
\frac{\widetilde F(t_1)}{(t_1)^3} = \frac{E(s_1)}{a+s_1} = \frac{D}{a(a+s_1)}.
\]
Hence we have 
\[
\widetilde F(t_1) \ge - C\eps a. 
\]
Since $\widetilde F$ is increasing by formula \eqref{eq: first derivative} (applied to $\widetilde F$ instead of $\widetilde F_0$) and the assumption $R_g \ge R_{g_0}$, it follows that 
\[
\widetilde F(4a) \ge - C\eps a. 
\]
We now repeat the argument at scale $8a$.  Working with the same functions $\widetilde F$ and $\widetilde F_0$, we define 
\begin{gather*}
E^1(s) = \int_{8a+s}^{16a + 2s} \frac{\widetilde F(t)}{t^3}\, dt, \quad s\in[0,8a]\\
D^1 = \int_0^{8a} E^1(s)\, ds
\end{gather*}
and we define $E^1_0$ and $D^1_0$ likewise using $\widetilde F_0$. 

\begin{prop}
We have $\vert D^1 - D^1_0\vert \le C\eps$. 
\end{prop}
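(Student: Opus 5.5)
The plan is to mirror the proof of Proposition \ref{prop: D-D0}, now at scale $8a$. The one new feature is that $\widetilde F$ and $\widetilde F_0$ carry the bulk term $\int_{\{\frac 1t\le u\le \frac 1a\}}$ anchored at $u=\frac1a$, not at $u=\frac1{8a}$. So the first step is to integrate $\widetilde F(t)/t^3$ over $t\in[8a+s,16a+2s]$ and then over $s\in[0,8a]$, exactly as before. Using \eqref{eq:W derivative identity}, Fubini and the co-area formula, this should give
\begin{align*}
D^1 &= 2\pi\ln 2 + \frac12\int_{\{\frac1{32a}\le u\le\frac1{16a}\}}\frac{|\grad u|^3}{u^3}\,dv - \int_{\{\frac1{16a}\le u\le\frac1{8a}\}}\frac{|\grad u|^3}{u^3}\,dv \\
&\qquad - c_1\frac{M(a)}{a} - \int_{\{\frac1{32a}\le u\le\frac1a\}}\psi_1(u,a) f(x)\frac{|\grad u|}{u^2}\,dv .
\end{align*}
Here $c_1$ is a universal constant. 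The weight $\psi_1(y,a)$ is continuous and piecewise polynomial in $y$. It is constant in $y$ on $[\frac1{8a},\frac1a]$, because for those $y$ the bulk term contributes to every $t$ in the range of integration. It also satisfies the scaling law $\psi_1(\frac ya,a)=a^{-1}\psi_1(y,1)$, which is checked directly from the explicit formula just as for $\psi$. The identical formula, with $u_0$ in place of $u$, holds for $D^1_0$. Since the constants $2\pi\ln 2$ and $c_1M(a)/a$ are common to both, they cancel in $D^1-D^1_0$.

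Next rescale by $w(x)=au(ax)$, $w_0(x)=au_0(ax)$, $h=g(a\,\cdot)$, $h_0=g_0(a\,\cdot)$ on $\Omega=a^{-1}\Omega_a$. After rescaling, the integrals run over the level ranges $\{\frac1{32}\le w\le\frac1{16}\}$, $\{\frac1{16}\le w\le\frac18\}$ and $\{\frac1{32}\le w\le 1\}$. The $\psi_1$ term picks up a factor $a^2$, which is harmless. Since $w_0\to|x|^{-1}$ in $C^1$ as $a\to0$, these sets lie roughly between the radii $1$ and $32$. They are therefore inside $\Omega'=B_{40}-B_{1/2}$, at distance at least $\frac1{10}$ from its boundary. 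The same holds for the corresponding sets of $w$, because $\|w-w_0\|_{L^\infty}\le C\eps$. We also have the gradient bounds $C_1\le|\grad w_0|\le C_2$ and a uniform area bound on the level sets.

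Now apply Proposition \ref{Proposition:IntegralBound} three times. For the two cubic terms take $q=3$, $\Psi(y)=y^{-3}$ and $f\equiv1$. For the last term take $q=1$, $\Psi(y)=\psi_1(y,1)y^{-2}$ and the function $f(ax)$. On the relevant $y$-intervals these $\Psi$ are Lipschitz with bounds independent of $a$, and $\|f\|_{L^\infty}$ is bounded uniformly in $a$. Each difference is then at most $C\eps$ with $C$ independent of $a$, which gives $|D^1-D^1_0|\le C\eps$.

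The main obstacle I expect is the geometric hypothesis of Proposition \ref{Proposition:IntegralBound} at the outer end, $w\approx\frac1{32}$, i.e.\ radius about $32$. This is close to $\bd\Omega'=\bd B_{40}$, but since $\Omega_a$ was defined with the factor $75$, there is still enough room. If the margin turned out insufficient, I would instead enlarge $\Omega'$ to something like $B_{60}-B_{1/2}$, which the Section~\ref{Section:PDE-Estimate} estimates allow since $\Omega\supset B_{50}-B_{1/50}$ up to small error. The remaining work is bookkeeping: verify that $\psi_1$ is continuous, and that the Fubini computation leaves no boundary terms except those already written, which follows from $\widetilde F$ being well defined for $t<75a$.
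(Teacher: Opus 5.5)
Your proposal is correct and follows essentially the same route as the paper. Both arguments compute $D^1$ and $D^1_0$ explicitly via \eqref{eq:W derivative identity}, Fubini and the co-area formula, cancel the common $2\pi\ln 2$ and $M(a)$ terms, use the scaling law for $\psi_1$, and then rescale and apply Proposition \ref{Proposition:IntegralBound}; your coefficient $\frac12$ on the first cubic term and the factor $1/a$ on the $M(a)$ term are in fact the correct values, and the outer-end obstacle you flag is not real, since those level sets sit near radius $32$, at distance about $8$ from $\bd B_{40}$.
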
 

\begin{proof}
    This follows from essentially the same argument as above. The argument is not quite verbatim because we are still breaking the bulk term at $a$ and not $8a$.  Nevertheless, it is straightforward to check that the same arguments apply. 
    To that end, we compute
    \begin{align*}
        E_0^1(s)=\frac{2\pi}{8a+s}-\frac{3M(a)}{16(8a+s)^2}+&\int_{\{u_0=\frac{1}{16a+2s}\}}\frac{|\nabla u_0|^2}{u_0}\,da-\int_{\{u_0=\frac{1}{8a+s}\}}\frac{|\nabla u_0|^2}{u_0}\,da\\
        &-\int\phi_1(u_0,a,s)f(x)\frac{|\nabla u_0|}{u_0^2}\,dv,
    \end{align*}
    where we have 
    \begin{align*}
        \phi_1(y,a,s)=\left\{\begin{matrix}
            \frac{1}{4}{(8a+s)^{-2}}-\frac{1}{4}{(16a+2s)^{-2}} & \text{ if } (8a+s)^{-1}\leq y\leq a^{-1},\\
            \frac{1}{4}{y^2}-\frac{1}{4}{(16a+2s)^{-2}} & \text{ if } (16a+2s)^{-1}\leq y\leq (8a+s)^{-1},\\
            0 & \text{ otherwise.}
        \end{matrix}\right.
    \end{align*}
    Then we have that
    \begin{align*}
        D_0^1=2\pi\ln 2-\frac{3M(a)}{256}+&\int_{\{\frac{1}{32a}\leq u_0\leq \frac{1}{16a}\}}\frac{|\nabla u_0|^3}{u_0^3}\,da-\int_{\{\frac{1}{16 a}\le u_0\le \frac{1}{8a}\}}\frac{|\nabla u_0|^3}{u_0^3}\, da\\
        &-\int_{\{\frac{1}{32a}\le u_0\le \frac{1}{a}\}}\psi_1(u_0,a)f(x)\frac{|\nabla u_0|}{u_0^2}\, dv,
    \end{align*}
    where 
\begin{align*}
    \psi_1(y,a)=\left\{
    \begin{matrix}
        4ay^2-\frac{1}{4}y+\frac{1}{256a}, & \text{ if } \frac{1}{32a}\leq y\leq \frac{1}{16 a}\\ \\
        -2ay^2+\frac{1}{2}y-\frac{5}{256 a}, & \text{ if } \frac{1}{16a}\leq y\leq \frac{1}{8 a}\\ \\
        \frac{3}{256a}, & \text{ if } \frac{1}{8a}\leq y\leq \frac{1}{a}
    \end{matrix}
    \right.
\end{align*}  

It follows directly that 
\[
\psi_1(\frac{y}{a},a)=a^{-1}\psi_1(y,1).
\]
An almost identical computation shows that 
\begin{align*}
    D^1=2\pi\ln 2-\frac{3M(a)}{256}+&\int_{\{\frac{1}{32a}\leq u\leq \frac{1}{16a}\}}\frac{|\nabla u|^3}{u^3}\,da-\int_{\{\frac{1}{16 a}\le u\le \frac{1}{8a}\}}\frac{|\nabla u_0|^3}{u^3}\, da\\
        &-\int_{\{\frac{1}{32a}\le u\le \frac{1}{a}\}}\psi_1(u,a)f(x)\frac{|\nabla u|}{u^2}\, dv.
        \end{align*}
Repeating the rescaling process as in the proof of Proposition \ref{prop: D-D0}, the desired estimate now follows.
\end{proof}

We have $D^1_0 = O(a^{4-2\tau})$ by Corollary \ref{Corollary:Small-Scale}.  It remains to estimate $D^1$. For convenience, set $P = \inf_B (R_g - R_{g_0})$. Then  for any $s \ge 8a$ we have 
\begin{align*}
    \widetilde F(s) &\ge \widetilde F(4a) + \frac 1 2 \int_{4a}^{8a} \left[\int_{\{u=\frac 1 t\}} R_g - f\, da\right]\, dt \\
    &\ge \frac{P}{2}\int_{\{\frac{1}{8a}\le u \le \frac{1}{4a}\}} \frac{\vert \grad u\vert}{u^2}\, dv - C\eps a,
\end{align*}
where we used the first derivative formula \eqref{eq: first derivative} in the first inequality.
It follows that  
\[
D^1 \ge PC a^{-1}  \int_{\{\frac{1}{8a}\le u \le \frac{1}{4a}\}} \frac{\vert \grad u\vert}{u^2}\, dv - C\eps. 
\]
We have the following estimate for the coefficient of $P$.

\begin{prop}
    There is an estimate 
    \[
\int_{\{\frac 1 {8a} \le u \le \frac 1 {4a}\}} \frac{\vert \grad u\vert}{u^2}\,dv \ge Ca^3 
    \]
    where $C>0$ depends only on $g_0$ and $u_0$. 
\end{prop}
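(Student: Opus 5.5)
The plan is to rescale to unit size, as in the proof of Proposition \ref{prop: D-D0}, and then compare with the model harmonic function $w_0$. Set $w(x) = a u(ax)$, $h_{ij}(x) = g_{ij}(ax)$ and $w_0(x) = a u_0(ax)$. The scaling behaviour of the integrand is
\[
\frac{\vert \grad^g u\vert_g}{u^2}(ax) = a^{-2}\cdot a^{2}\,\frac{\vert\grad^h w\vert_h}{w^2}(x)\cdot a^{-1}\cdot\ldots
\]
and rather than trust this line, I will track the homogeneity directly. We have $u = a^{-1} w$, $\vert \grad^g u\vert_g = a^{-2}\vert \grad^h w\vert_h$, and $dv_g = a^3\, dv_h$. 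Hence
\[
\int_{\{\frac 1 {8a}\le u\le \frac 1{4a}\}} \frac{\vert\grad u\vert}{u^2}\, dv_g = a^3\int_{\{\frac18\le w\le \frac14\}} \frac{\vert \grad^h w\vert_h}{w^2}\, dv_h,
\]
so it suffices to bound the unit-scale integral below by a positive constant independent of $a$ and $\eps$.

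For the model function, the Green's function asymptotics (Theorem \ref{Theorem:Green}) give $w_0 \to \vert x\vert^{-1}$ in $C^1(\Omega)$ as $a\to 0$, and $h_0 \to g_{\text{euc}}$ in $C^0$. For the Euclidean model, the integral over $\{4\le \vert x\vert\le 8\}$ of $\vert x\vert^{-2}/\vert x\vert^{-2} = 1$ is the volume of that shell, a fixed positive number $c_*$. Consequently, for $a$ small the corresponding integral for $(w_0,h_0)$ is at least $c_*/2$. Note that the region $\{\frac18\le w_0\le\frac14\}$ sits inside $\Omega' = B_{40}-B_{1/2}$ with distance at least $1/10$ from the boundary, and that $w_0$ has gradient bounded above and below and level sets of bounded area there.

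Next I transfer this bound from $w_0$ to $w$ using Proposition \ref{Proposition:IntegralBound} with $q=1$, $f\equiv 1$, $c=\frac18$, $d=\frac14$, and $\Psi(y)=y^{-2}$, which is Lipschitz and bounded on $[c-1,d+1]$ after modifying it near $y\le 0$. The proposition requires the level regions to lie in $\Omega'$ at distance at least $1/10$ from its boundary. For $\{\frac18\le w\le\frac14\}$ this follows from the $C^0$ bound $\|w-w_0\|_{L^\infty}\le C\eps$ of the Hölder estimate proposition. Since modifying $\Psi$ away from $[c,d]$ changes nothing, the proposition gives that the two integrals differ by at most $C\eps$. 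Choosing $\eps_0$ small then makes the $w$-integral at least $c_*/4$, which yields the claim with $C = c_*/4$.

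The main obstacle is uniformity. The constants in Proposition \ref{Proposition:IntegralBound} depend on $C_1$, $C_2$, $C_3$ and the Lipschitz data, and these must be bounded independently of $a$. This is exactly what the $C^1$ convergence $w_0\to\vert x\vert^{-1}$ provides once $a$ is below a threshold depending only on $g_0$. Since $a=\eps^{1/4}$ and $\eps\le\eps_0$, shrinking $\eps_0$ ensures that threshold is met.
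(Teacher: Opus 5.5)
Your proposal is correct and is essentially the paper's own argument: rescale to extract the factor $a^3$, use Proposition \ref{Proposition:IntegralBound} to compare the $w$-integral with the $w_0$-integral up to $C\eps$, and bound the $w_0$-integral below using the $C^1$ convergence $w_0\to\vert x\vert^{-1}$, which gives the volume of the Euclidean shell $\{4\le\vert x\vert\le 8\}$. Your treatment of $\Psi(y)=y^{-2}$ (extended to a bounded Lipschitz function) and of the uniformity of constants in $a$ is slightly more careful than the paper's; delete the garbled first scaling display, and note that placing $\{\frac18\le w\le\frac14\}$ inside $\Omega'$ also needs the maximum principle, since the $L^\infty$ bound on $w-w_0$ is only known on $\Omega'$.
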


\begin{proof}
Again we rescale to unit size. Define $\Omega = a^{-1}\Omega_a$ and define $w(x) = a u(ax)$ and $w_0(x) = a w_0(ax)$ and $h_{ij}(x) = g_{ij}(ax)$ and $(h_0)_{ij}(x) = (g_0)_{ij}(ax)$. Then we have 
\[
\int_{\{\frac 1 {8a} \le u \le \frac 1 {4a}\}} \frac{\vert \grad^g u\vert}{u^2}\,dv_g = a^3 \int_{\{\frac 1 8 \le w \le \frac 1 4\}} \frac{\vert \grad^h w \vert}{w^2}\, dv_h.  
\]
By Proposition \ref{Proposition:IntegralBound}, we have 
\[
\int_{\{\frac 1 8 \le w \le \frac 1 4\}} \frac{\vert \grad^h w \vert}{w^2}\, dv_h \ge \int_{\{\frac 1 8 \le w_0 \le \frac 1 4\}} \frac{\vert \grad^{h_0} w_0 \vert}{w_0^2}\, dv_{h_0} -C\eps.  
\]
On the other hand, we know that $w_0$ is $C^1$ close to $\vert x\vert^{-1}$ once $a$ is small. It follows that 
\[
\int_{\{\frac 1 8 \le w_0 \le \frac 1 4\}} \frac{\vert \grad^{h_0} w_0 \vert}{w_0^2}\, dv_{h_0} \ge C > 0
\]
for a constant $C >0$ that does not depend on $a$. Putting everything together, we see that 
\[
\int_{\{\frac 1 {8a} \le u \le \frac 1 {4a}\}} \frac{\vert \grad^g u \vert}{u^2}\, dv_g \ge Ca^3 - Ca^3 \eps \ge Ca^3
\]
provided $\eps$ is sufficiently small. 
\end{proof}

Thus we get 
\[
D^1 \ge P C a^2 - C\eps. 
\]
On the other hand, we have 
\[
D^1 \le D^1_0 + C\eps \le C\eps + C a^{4-2\tau}. 
\]
Combining the estimates we get 
\[
P C a^2 \le C\eps + Ca^{4-2\tau}. 
\]
Finally, we select $a = \eps^{1/4}$ to get 
\[
P \le C\eps^{1/2} + C \eps^{1/2-2\tau} \le C \eps^q
\]
where $q = \frac 1 2 - 2\tau$. This completes the proof.

\section{Quantification for Smooth Metrics}
\label{Section:Smooth}

In this section, we prove our main theorem for smooth metrics. 

\begin{theorem}
\label{Theorem:MainSmooth}
    Assume that $g$ and $g_0$ are smooth metrics on the unit ball $B$ in $\R^3$.  Then there are constants $C$ and $\eps_0$ depending only on $g_0$ so that 
    \[
    \inf_B R_g \le R_{g_0}(0) + C \|g-g_0\|_{C^0}^{1/2}
    \]
    provided that $\|g-g_0\|_{C^0}\le \eps_0$. 
\end{theorem}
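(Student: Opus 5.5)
The plan is to rerun the argument of Section \ref{Section:C2} almost verbatim, with two changes. First, the continuous function $f(x)=R_{g_0}(x)$ is replaced by the constant $f\equiv R_{g_0}(0)$. Second, the $O(t^{4-2\tau})$ error of Proposition \ref{Proposition:DerivativeEstimate} is sharpened to $O(t^4)$ using the smoothness of $g_0$.

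\textbf{Setup.} Change coordinates as before so that $g_0$ is normal at the origin. Let $u_0=\Gamma-e(0)$. Define $\widetilde F_0$, $\widetilde F$, $E$, $D$, $E^1$, $D^1$, $D_0$, $D_0^1$ exactly as in Section \ref{Section:C2}, but with $f\equiv R_{g_0}(0)$. Set $P=\inf_B R_g - R_{g_0}(0)$ and assume $P>0$, since otherwise there is nothing to prove.

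\textbf{The $g$ side.} Theorem \ref{Theorem:Derivative} gives $\widetilde F'(t)\ge \frac P2\,\area(\{u=1/t\})\ge 0$. Hence $\widetilde F$ is still monotone, and the comparison steps go through unchanged: Proposition \ref{prop: D-D0}, its analogue at scale $8a$, and the lower bound $\int_{\{\frac1{8a}\le u\le\frac1{4a}\}}|\grad u|u^{-2}\,dv\ge Ca^3$. All of these rely only on Proposition \ref{Proposition:IntegralBound}, and a constant $f$ is certainly bounded and continuous.

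\textbf{The $g_0$ side.} This is the new point, because $\widetilde F_0$ is no longer monotone. Its derivative now equals the nonnegative integrand of Proposition \ref{Proposition:DerivativeEstimate} plus
\[
\frac12\int_{\{u_0=1/t\}}\big(R_{g_0}(x)-R_{g_0}(0)\big)\,da .
\]
I claim $\widetilde F_0'(t)=O(t^4)$. There are two ingredients.

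1. For smooth $g_0$ in dimension three, the Green's function expansion is $u_0=|x|^{-1}+O_k(|x|)$ with no logarithmic loss. Redoing the computation behind (3.18) of \cite{agostiniani2024green} then gives $O(t^4)$ for the nonnegative integrand, with no $\tau$.

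2. For the curvature term, Taylor expand $R_{g_0}(x)-R_{g_0}(0)=\grad R_{g_0}(0)\cdot x+O(|x|^2)$. The quadratic remainder contributes $O(t^2)\cdot O(t^2)=O(t^4)$. The level set $\{u_0=1/t\}$ is a sphere of radius $t$ centered at $0$, up to a relative error $O(t^2)$, and the area element differs from the Euclidean one by a factor $1+O(t^2)$. Since $\int_{S_t}x\,da=0$, the linear term contributes only $O(t\cdot t^2\cdot t^2)=O(t^5)$.

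Integrating from $0$ gives $|\widetilde F_0(t)|\le Ct^5$. Consequently $|D_0|\le Ca^4$ and $|D_0^1|\le Ca^4$, using the same scaling as in Corollary \ref{Corollary:Small-Scale} with $\tau=0$.

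\textbf{Conclusion.} Combining as before:
\begin{itemize}
\item $D\ge D_0-C\eps\ge -C(\eps+a^4)$.
\item By the mean value argument and monotonicity of $\widetilde F$, this gives $\widetilde F(4a)\ge -C(\eps+a^4)a$.
\item Hence $D^1\ge PCa^2-C(\eps+a^4)$.
\item On the other hand, $D^1\le D_0^1+C\eps\le C(\eps+a^4)$.
\end{itemize}
Therefore $Pa^2\le C(\eps+a^4)$. Choosing $a=\eps^{1/4}$ gives $P\le C\eps^{1/2}$, as required.

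\textbf{Main obstacle.} I expect the sharp $O(t^4)$ bound on $\widetilde F_0'$ to be the hardest step. It requires two things. One is tracking the \cite{agostiniani2024green} expansion carefully enough in the smooth case to see that no logarithmic or $\tau$-loss appears. The other is checking that the deviation of the level sets from centered round spheres is genuinely $O(t^2)$ relative, so that the odd first-order term cancels to order $t^5$. I would first verify both using the explicit next-order terms of the Green's function in normal coordinates, namely $u_0=|x|^{-1}+c+O(|x|)$ with smooth corrections.
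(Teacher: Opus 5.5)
Your argument is correct. It differs from the paper's in one structural choice: where the non-constant part of $R_{g_0}$ is absorbed.

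The paper keeps $f(x)=R_{g_0}(x)$ in all the functionals, so $\widetilde F_0$ stays monotone and $D_0\ge 0$ for free. The cost is that, under the hypothesis $\inf_B R_g>R_{g_0}(0)$ alone, $\widetilde F$ is no longer monotone. So the paper integrates $\widetilde F'$ from the mean-value point $t_1$ and must control the extra $g$-side term $\int_{\{1/s\le u\le 1/t_1\}}(f(x)-f(0))\,|\grad u|u^{-2}\,dv$. This needs a further comparison with its $u_0$-analogue (error $Ca^3\eps$, via Proposition \ref{Proposition:IntegralBound}) together with the bulk cancellation of Proposition \ref{Proposition:ImprovedAsymptotics}.

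Your choice $f\equiv R_{g_0}(0)$ restores monotonicity of $\widetilde F$, so the $g$-side is literally the argument of Section \ref{Section:C2}. The whole discrepancy moves onto the reference metric, where it becomes the purely asymptotic statement $|\widetilde F_0(t)|\le Ct^5$, with no PDE comparison needed. In both routes the same odd cancellation of $\grad R_{g_0}(0)\cdot x$ does the work. The $\tau=0$ improvement of Proposition \ref{Proposition:DerivativeEstimate} that you flag as the main obstacle is used identically in the paper (to get $D_0^1=O(a^4)$), so it is not an extra cost of your route.

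One simplification is available. Your $\widetilde F_0$ equals the paper's plus $\frac12\int_{\{u_0\ge 1/t\}}(R_{g_0}(x)-R_{g_0}(0))\,|\grad u_0|u_0^{-2}\,dv$. Hence $|\widetilde F_0(t)|\le Ct^5$ follows by combining Corollary \ref{Corollary:Small-Scale} (with $\tau=0$) with the proof of Proposition \ref{Proposition:ImprovedAsymptotics}, run on the near-ball $\{u_0\ge 1/t\}$ instead of an annulus. That bulk argument needs only $\bigl||x|-u_0^{-1}\bigr|=O(|x|^3)$ and the expansion $|\grad u_0|u_0^{-2}\,dv=(1+O(|x|^2))\,dx$. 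It avoids your level-set computation, which would also require controlling the area element of $\{u_0=1/t\}$ as a radial graph. That control is true, since $\grad u_0=\grad|x|^{-1}+O(1)$ makes the normal radial up to $O(t^2)$, but it is an extra step.
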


By a preliminary change of coordinates, we can assume that $g_0$ is in geodesic normal coordinates near the origin.  In the smooth case, we can obtain improved asymptotic estimates for several quantities. Again we let $\Gamma$ be a Green's function for $\lap_{g_0}$ on $B$. Write $\Gamma(x) = \frac{1}{\vert x\vert} + e(x)$ and then set $u_0(x) = \Gamma(x) - e(0)$. Then we have an improved expansion
\[
u_0(x) = \frac{1}{\vert x\vert} + O_2(\vert x\vert),
\]
i.e., in the notation of the previous section we can set $\tau = 0$. 

Next we record the following simple lemma.

\begin{lemma}
If $x$ is sufficiently close to the origin, then we have an estimate 
\[
\vert \vert x\vert - u_0(x)^{-1}\vert = O(\vert x\vert^3).
\] 
\end{lemma}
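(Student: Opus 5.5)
The plan is to invert the expansion $u_0(x) = \frac{1}{\vert x\vert} + O_2(\vert x\vert)$ directly. For $x$ near the origin, write $u_0(x) = \frac{1}{\vert x\vert} + e_1(x)$ with $\vert e_1(x)\vert \le K\vert x\vert$ for some constant $K$ depending only on $g_0$. Here $e_1(x) = e(x) - e(0)$. The improved expansion in the smooth case gives exactly this bound, since $e$ is (at least) Lipschitz near $0$ once $g_0$ is in normal coordinates.

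Then compute
\[
u_0(x)^{-1} = \frac{1}{\frac{1}{\vert x\vert} + e_1(x)} = \frac{\vert x\vert}{1 + \vert x\vert e_1(x)}.
\]
Since $\vert x\vert\, \vert e_1(x)\vert \le K\vert x\vert^2$, this quantity is at most $\frac12$ once $\vert x\vert$ is small, so the denominator is bounded away from zero and positive. Hence
\[
\big\vert \vert x\vert - u_0(x)^{-1}\big\vert = \vert x\vert \cdot \frac{\vert x\vert\,\vert e_1(x)\vert}{1+\vert x\vert e_1(x)} \le 2\vert x\vert \cdot K\vert x\vert^2 = 2K\vert x\vert^3.
\]

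The only substantive input is that the error term in $u_0$ is $O(\vert x\vert)$ rather than merely $O(\vert x\vert^{1-\tau})$. This is where the smoothness of $g_0$ and the normalization $u_0 = \Gamma - e(0)$ enter, via the Green's function asymptotics of Theorem \ref{Theorem:Green} in the smooth case. If only $O(\vert x\vert^{1-\tau})$ were available, the same computation would give $O(\vert x\vert^{3-\tau})$.

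I expect no real obstacle. The one point to check is that subtracting $e(0)$ really kills the constant term, so that $e_1(0)=0$ and $e$ is $C^1$ near the origin. This is precisely what the stated expansion $O_2(\vert x\vert)$ encodes.
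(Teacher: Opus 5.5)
Your argument is correct and is essentially the paper's. Both proofs invert $u_0 = \vert x\vert^{-1} + O(\vert x\vert)$ through the identity $\vert x\vert - u_0^{-1} = \vert x\vert\,u_0^{-1}\,(u_0 - \vert x\vert^{-1})$; the only difference is that the paper quotes the comparability $C_1 u_0^{-1} \le \vert x\vert \le C_2 u_0^{-1}$ from \cite{agostiniani2024green}, while you get it directly from $\vert x\vert\,\vert e_1(x)\vert \le \frac12$. Two small corrections: the input you use is the Lipschitz bound $\vert e(x)-e(0)\vert \le C\vert x\vert$ of Theorem \ref{Theorem:GreenSmooth}, not Theorem \ref{Theorem:Green}; and you should drop the remark that $e$ is $C^1$ at the origin, since in general it is only Lipschitz there, though your argument never uses it.
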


\begin{proof}
For convenience, let $t(x) = u_0(x)^{-1}$.  It is not hard to show that 
\begin{equation} 
\label{Equation:CrudeBound} C_1 t \le \vert x\vert \le C_2 t
\end{equation} 
for some constants $0 < C_1 < C_2$; c.f. \cite[Equation (3.12)]{agostiniani2024green}.  Moreover, we have an asymptotic expansion 
\[
u_0(x) = \frac{1}{\vert x\vert} + O_2(\vert x\vert)
\]
which implies that 
\[
\left \vert \frac 1 t - \frac 1 {\vert x\vert}\right\vert = O(\vert x\vert). 
\]
Therefore, we obtain 
\[
\vert  \vert x\vert - t\vert = t \vert x\vert O(\vert x\vert) 
\]
and by \eqref{Equation:CrudeBound} this implies that $\vert \vert x\vert - t\vert = O(\vert x\vert^3)$. 
\end{proof} 

We now obtain an asymptotic estimate for the bulk integral of scalar curvature.

\begin{prop}
\label{Proposition:ImprovedAsymptotics}
For sufficiently small $a > 0$, we have 
\[
\int_{\{\frac 1 {4a} \le u_0 \le \frac 1 a \}} (R_{g_0}(x) - R_{g_0}(0)) \frac{\vert \grad^{g_0} u_0(x) \vert}{u_0(x)^2}\, dv_{g_0}(x) = O(a^5). 
\]
\end{prop}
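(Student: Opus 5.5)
The plan is to use the Taylor expansion of $R_{g_0}$ at the origin together with the near-radial symmetry of $u_0$. The region $\{\frac1{4a}\le u_0\le \frac1a\}$ has size comparable to $a$, and the weight $\frac{|\grad u_0|}{u_0^2}$ is of order $|x|^{-2}\cdot |x|^{2} = O(1)$. A crude bound $R_{g_0}(x)-R_{g_0}(0) = O(|x|)$ would therefore give only $O(a^4)$. The extra power of $a$ has to come from cancellation of the linear term by symmetry.

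Since $g_0$ is smooth, write
\[
R_{g_0}(x) - R_{g_0}(0) = \ell(x) + O(|x|^2),
\]
where $\ell(x) = \grad R_{g_0}(0)\cdot x$ is linear. The $O(|x|^2)$ part integrates against a weight of size $O(1)$ over a region of volume $O(a^3)$. Here we use $C_1 t\le |x|\le C_2 t$ and the fact that $|\grad u_0|\,u_0^{-2}$ is bounded, which follows from $u_0 = |x|^{-1}+O_2(|x|)$. This part therefore contributes $O(a^2\cdot a^3) = O(a^5)$. It remains to show
\[
\int_{\{\frac1{4a}\le u_0\le\frac1a\}} \ell(x)\,\frac{|\grad^{g_0}u_0|}{u_0^2}\,dv_{g_0} = O(a^5).
\]

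For the linear term, I compare everything with the Euclidean radial model. In geodesic normal coordinates, $(g_0)_{ij} = \delta_{ij} + O(|x|^2)$, so $dv_{g_0} = (1+O(|x|^2))\,dx$ and $|\grad^{g_0}u_0|_{g_0} = |\grad u_0|\,(1+O(|x|^2))$. The expansion $u_0 = |x|^{-1} + O_2(|x|)$ gives $\grad u_0 = -x|x|^{-3} + O(1)$. Hence
\[
\frac{|\grad^{g_0}u_0|}{u_0^2} = 1 + O(|x|^2),
\]
because the $O(1)$ gradient error relative to $|x|^{-2}$ is $O(|x|^2)$, and $u_0^{-2} = |x|^2(1+O(|x|^2))$. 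Multiplying by $\ell = O(|x|)$, these errors contribute $O(a^3)\cdot O(a^3) = O(a^6)$.

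The domain also differs from the Euclidean shell $\{a\le |x|\le 4a\}$. By the preceding lemma, $\big||x| - u_0^{-1}\big| = O(|x|^3)$, so the symmetric difference of the two regions lies within shells of thickness $O(a^3)$ around $|x|=a$ and $|x|=4a$. These shells have volume $O(a^2\cdot a^3) = O(a^5)$, and on them the integrand is $O(a)$, so they contribute $O(a^6)$.

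We are left with $\int_{\{a\le|x|\le 4a\}} \ell(x)\,dx$, which vanishes because $\ell$ is odd and the shell is symmetric under $x\mapsto -x$. Summing all contributions gives $O(a^5)$.

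The main obstacle is bookkeeping the error in the weight and in the domain carefully enough that every correction multiplying the linear term is relatively $O(a^2)$ and not merely $O(a)$. This is where the smooth hypothesis enters, through $\tau=0$, the normal-coordinate expansion $g_0 = \delta + O(|x|^2)$, and the lemma $\big||x|-u_0^{-1}\big| = O(|x|^3)$. One must check in particular that no $O(|x|)$ term enters $\grad u_0$ beyond the bounded $O(1)$ error, which the $O_2(|x|)$ expansion guarantees.
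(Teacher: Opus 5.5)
Your proposal is correct and follows essentially the same route as the paper. Both arguments Taylor-expand $R_{g_0}$ at the origin and show $\frac{|\grad^{g_0}u_0|}{u_0^2}\,dv_{g_0} = (1+O(|x|^2))\,dx$ in normal coordinates. Both then use the lemma $\big||x|-u_0^{-1}\big| = O(|x|^3)$ to replace the region by a Euclidean shell at a cost of volume $O(a^5)$, and cancel the linear term by oddness. The paper compares with the slightly enlarged shell $\{a-Ca^3\le|x|\le 4a+Ca^3\}$ instead of $\{a\le|x|\le 4a\}$, but both are invariant under $x\mapsto -x$, so this makes no difference.
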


\begin{proof}
Recall that $g_0$ is in geodesic normal coordinates near the origin. Therefore, we have the following asymptotic expansions: 
\begin{gather*}
g_{ij}(x) = \delta_{ij} + O(\vert x\vert^2),\\
u_0(x) = \frac{1}{\vert x\vert} + O_2(\vert x\vert). 
\end{gather*}
This implies that 
\begin{gather*}
dv_{g_0}(x) = \sqrt{\vert g_0\vert} \, dx = (1+O(\vert x\vert^2)) \, dx,\\
\vert \grad^{g_0} u_0(x)\vert_{g_0} = (1 + O(\vert x\vert^2)) \vert \grad u_0(x)\vert. 
\end{gather*}
Note that $\grad u_0(x) = \grad \vert x\vert^{-1} + O(1)$ and so 
\[
\left\vert \vert \grad u_0(x)\vert - \left\vert \grad\frac{1}{\vert x\vert}\right\vert \right\vert \le \left\vert \grad u_0(x)-\grad \frac 1 {\vert x\vert} \right\vert = O(1).
\] 
Thus we have 
\begin{gather*}
u_0(x) = \frac{1}{\vert x\vert} + O(\vert x\vert),\\
\vert \grad u_0(x)\vert = \frac{1}{\vert x\vert^2} + O(1).
\end{gather*}
Combining everything, we deduce that 
\begin{align*}
\frac{\vert \grad^{g_0} u_0(x)\vert}{u_0(x)^2} &= \frac{(1+O(\vert x\vert^2))\left(\frac{1}{\vert x\vert^2} + O(1)\right)}{\left(\frac{1}{\vert x\vert} + O(\vert x\vert)\right)^2} = \frac{\frac{1}{\vert x\vert^2} + O(1)}{\frac{1}{\vert x\vert^2} + O(1)} = \frac{1+O(\vert x\vert^2)}{1+O(\vert x\vert^2)} = 1+O(\vert x\vert^2). 
\end{align*} 
For convenience, let $f(x) = R_{g_0}(x)$. Then we have an expansion 
\[
f(x) - f(0) = \grad f(0) \cdot x + O(\vert x\vert^2).
\]
Together with the previous expansions, this gives 
\begin{align*}
&\int_{\{\frac 1 {4a} \le u_0 \le \frac 1 a \}} (R_{g_0}(x) - R_{g_0}(0)) \frac{\vert \grad^{g_0} u_0(x) \vert}{u_0(x)^2}\, dv_{g_0}(x) \\
&\qquad = \int_{\{\frac 1 {4a} \le u_0 \le \frac 1 a \}} 
 (\grad f(0) \cdot x + O(\vert x\vert^2))(1 + O(\vert x\vert^2))(1+O(\vert x\vert^2)) \, dx\\
 &\qquad = \int_{\{\frac 1 {4a} \le u_0 \le \frac 1 a \}} \left[\grad f(0)\cdot x + O(\vert x\vert^2)\right]\, dx.
 \end{align*}
 It remains to estimate the difference between $A_0 := \{a \le \frac{1}{u_0} \le 4a\}$ and a Euclidean annulus.
 
Note that $\vert x\vert = O(a)$ on the region of integration. Also recall from the previous lemma that 
\[
\left\vert \vert x\vert - \frac{1}{u_0(x)}\right\vert = O(\vert x\vert^3). 
\]
Therefore, it follows that $A_0 \subset A$ where $A := \{a - Ca^3 \le \vert x\vert \le 4a + Ca^3\}$.  Likewise we have $A_0 \supset A'$ where $A' := \{a + Ca^3 \le \vert x\vert \le 4a - Ca^3\}$.  In particular, the Euclidean volume of $A_0$ satisfies 
\[
\vert A_0\vert \le \vert A\vert \le Ca^3,
\]
and we have an estimate 
\[
\vert A_0 \operatorname{\Delta} A\vert = O(a^5). 
\]
This immediately implies that 
\[
\int_{\{\frac 1 {4a} \le u_0 \le \frac 1 a \}} O(\vert x\vert^2)\, dx = O(a^5). 
\]
Finally, we compute that 
\begin{align*}
&\int_{\{\frac 1 {4a} \le u_0 \le \frac 1 a \}} (\grad f(0)\cdot x) \, dx \\
&\qquad = \int_{A} (\grad f(0) \cdot x)\, dx  + \int_{A_0 - A} (\grad f(0)\cdot x)\, dx - \int_{A-A_0} (\grad f(0)\cdot x)\, dx  \\
&\qquad =  \int_{A_0 - A} (\grad f(0)\cdot x)\, dx -\int_{A-A_0} (\grad f(0)\cdot x)\, dx \\
&\qquad = \vert A\operatorname{\Delta} A_0\vert O(a) = O(a^6).\phantom{\int} 
\end{align*} 
This proves the result. 
\end{proof}

Fix the scale $a = \eps^{1/4}$ where $\eps = \|g-g_0\|_{C^0}$. We now proceed exactly as in the previous section to define a $g$-harmonic function $u$. Then we define quantities $D$ and $D_0$ exactly as in the previous section, and show via the same argument that $\vert D-D_0\vert \le C\eps$. We have $D_0 \ge 0$.  Therefore, we obtain 
\[
D \ge - C\eps 
\]
and it follows that there is a point $t_1 \in [a,4a]$ with 
\[
\widetilde F(t_1) \ge - C\eps a. 
\]
As before, we then repeat the argument at scale $8a$ to define $D^1$ and $D^1_0$. 

By Proposition \ref{Proposition:DerivativeEstimate} and the improved asymptotics of the Green's function, we have $D^1_0 = O(a^4)$. We need to estimate $D^1$. For convenience, set $P = \inf_B(R_g) - R_{g_0}(0) = \inf_B (R_g) - f(0)$. 
Then for any $s \ge 8a$ we have 
\begin{align*}
    \widetilde F(s)&\ge \widetilde F(t_1) + \frac 1 2 \int_{t_1}^s \left[\int_{\{u=\frac 1 t\}} R_g(x) - f(x)\, da\right]\, dt\\
    & \ge -C\eps a  + \frac 1 2 \int_{t_1}^s \left[\int_{\{u=\frac 1 t\}} R_g(x) - f(0)\, da\right]\, dt + \frac 1 2 \int_{t_1}^s \left[\int_{\{u=\frac 1 t \}} f(0) - f(x)\, da\right]\, dt\\
    & \ge -C\eps a  + \frac P 2 \int_{\{\frac 1 {8a} \le u\le \frac 1 {4a}\}} \frac{\vert \grad u\vert}{u^2}\, dv  - \frac 1 2 \int_{\{\frac{1}{s}\le u \le \frac{1}{t_1}\}} (f(x) - f(0))\frac{\vert \grad u\vert}{u^2}\, dv.
\end{align*}
As before, we have 
\[
\int_{\{\frac 1 {8a}\le u\le \frac{1}{4a}\}} \frac{\vert \grad u\vert}{u^2}\, dv \ge Ca^3
\]
for some $C > 0$ that doesn't depend on $a$. Inserting this into the previous estimate gives 
\[
\widetilde F(s) \ge -C\eps a  + C P a^3 - \frac 1 2 \int_{\{\frac 1 s \le u \le \frac{1}{t_1}\}} (f(x)-f(0)) \frac{\vert \grad u\vert}{u^2}\, dv. 
\]
It remains to estimate the final term. 

\begin{prop}
    For any $s\in [8a,32a]$, we have 
    \[
    \left\vert \int_{\{\frac 1 s \le u \le \frac{1}{t_1}\}} (f(x)-f(0)) \frac{\vert \grad^g u\vert}{u^2}\, dv_g - \int_{\{\frac 1 s \le u \le \frac{1}{t_1}\}} (f(x)-f(0)) \frac{\vert \grad^{g_0} u_0\vert}{u_0^2}\, dv_{g_0} \right\vert \le Ca^3 \eps.  
    \]
\end{prop}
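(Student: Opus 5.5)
We rescale to unit size exactly as in the proof of Proposition \ref{prop: D-D0}. Set $\Omega = a^{-1}\Omega_a$, $w(x) = a u(ax)$, $w_0(x) = a u_0(ax)$, $h_{ij}(x) = g_{ij}(ax)$ and $(h_0)_{ij}(x) = (g_0)_{ij}(ax)$. Under this change of variables $dv_g = a^3 dv_h$, $\vert \grad^g u\vert_g = a^{-2}\vert \grad^h w\vert_h$ and $u^2 = a^{-2}w^2$, so $\frac{\vert\grad^g u\vert}{u^2}\,dv_g = a^3 \frac{\vert \grad^h w\vert}{w^2}\,dv_h$. The region $\{\frac1s\le u\le \frac1{t_1}\}$ becomes $\{\frac as \le w \le \frac a{t_1}\}$. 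Since $s/a\in[8,32]$ and $t_1/a\in[1,4]$, this region lies in $\{\frac1{32}\le w\le 1\}$, which is well inside $\Omega'$ for $a$ small, because $w_0$ is $C^1$-close to $\vert x\vert^{-1}$. The claimed estimate is therefore equivalent to
\[
\left\vert \int_{\{c\le w\le d\}} \tilde f(x)\frac{\vert\grad^h w\vert}{w^2}\,dv_h - \int_{\{c\le w_0\le d\}} \tilde f(x)\frac{\vert\grad^{h_0} w_0\vert}{w_0^2}\,dv_{h_0}\right\vert \le C\eps,
\]
with $c = a/s$, $d = a/t_1$ and $\tilde f(x) = f(ax)-f(0)$.

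Note that $\tilde f$ is not small: it is $O(a)$, but we do not need that. It is a bounded continuous function with $\|\tilde f\|_{L^\infty}\le 2\|R_{g_0}\|_{L^\infty}$, uniformly in $a$. The weight $\Psi(y) = y^{-2}$ is Lipschitz and bounded on $[c-\frac1{64}, d+1]$ with constants independent of $a$, since $c\ge \frac1{32}$. If Proposition \ref{Proposition:IntegralBound} requires a Lipschitz extension to $[c-1,d+1]$, we simply modify $\Psi$ outside a neighbourhood of $[c,d]$. The sets $U = \{c\le w\le d\}$ could reach outside $[c,d]$ only through the $O(\eps)$ shift controlled in that proposition.

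The hypotheses of Proposition \ref{Proposition:IntegralBound} hold uniformly in $a$:
\begin{itemize}
\item $0<C_1\le\vert\grad w_0\vert\le C_2$ on $\Omega'$;
\item the level sets of $w_0$ have uniformly bounded area;
\item $U$ and $U_0$ stay at distance $\ge \frac1{10}$ from $\partial\Omega'$.
\end{itemize}
All three follow from the $C^1$ convergence of $w_0$ to $\vert x\vert^{-1}$ together with $\|w-w_0\|_{L^\infty}\le C\eps$.

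Applying Proposition \ref{Proposition:IntegralBound} with $q=1$ gives the bound $C\eps$, with $C$ independent of $a$, $s$ and $t_1$. Multiplying back by $a^3$ gives $Ca^3\eps$.

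The main point to check is uniformity. The constants in Proposition \ref{Proposition:IntegralBound} must not depend on the endpoints $c,d$, which vary with $s$ and $t_1$; they do not, since they depend only on $C_1,C_2,C_3,\|\tilde f\|_{L^\infty},L,\|\Psi\|_{L^\infty}$. The same proposition must also serve for every $a$; this holds once $a$ is small enough that $h_0$ is $C^0$-close to Euclidean, so the Meyers estimate of Section \ref{Section:PDE-Estimate} applies with $p\ge3$.
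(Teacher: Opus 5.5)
Your proposal is correct and follows the paper's own proof. Like the paper, you rescale by $a$ to extract the factor $a^3$ and then apply Proposition \ref{Proposition:IntegralBound} to the rescaled integrals, with $q=1$, $\Psi(y)=y^{-2}$ and bounded weight $f(ax)-f(0)$. Your remarks on modifying $\Psi$ away from $[c,d]$ and on the constants being uniform in $a$, $s$ and $t_1$ supply details the paper leaves implicit.
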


\begin{proof}
    We rescale to unit size.  Define $w(x) = au(ax)$ and $w_0(x) = au_0(ax)$ and $h(x) = g(ax)$ and $h_0(x) = g_0(ax)$. Then one has 
    \[
    \int_{\{\frac 1 s \le u \le \frac{1}{t_1}\}} (f(x)-f(0)) \frac{\vert \grad^g u\vert}{u^2}\, dv_g = a^3 \int_{\{\frac a s \le w \le \frac a {t_1}\}} (f(ax)-f(0)) \frac{\vert\grad^h w\vert}{w^2}\, dv_h
    \]
    and similarly for $u_0$, $w_0$, $g_0$ and $h_0$. Therefore it suffices to show that 
    \[
    \left\vert \int_{\{\frac a s \le w \le \frac a {t_1}\}} (f(ax)-f(0)) \frac{\vert\grad^h w\vert}{w^2}\, dv_h-\int_{\{\frac a s \le w_0 \le \frac a {t_1}\}} (f(ax)-f(0)) \frac{\vert\grad^{h_0} w_0\vert}{w_0^2}\, dv_{h_0}\right\vert \le C\eps. 
    \]
    But this follows immediately from Proposition \ref{Proposition:IntegralBound}.
\end{proof}

Combining this with Proposition \ref{Proposition:ImprovedAsymptotics}, we see that 
\[
\int_{\{\frac 1 s \le u \le \frac{1}{t_1}\}} (f(x)-f(0)) \frac{\vert \grad^g u\vert}{u^2}\, dv_g \ge -Ca^5 - Ca^3 \eps \ge -Ca^5 
\]
for the scale $a=\eps^{1/4}$. Hence we obtain 
\[
\widetilde F(s) \ge -C\eps a - Ca^5 + CPa^3
\]
for all $s\in [8a,16a]$ and it follows that 
\[
D^1 \ge - C\eps - Ca^4 + CPa^2 = -C\eps + CP \eps^{1/2}
\]
since we choose $a = \eps^{1/4}$. On the other hand, using $\vert D^1 - D^1_0\vert \le C\eps$, we have 
\[
D^1 \le D^1_0 + C\eps \le C\eps + Ca^4 = C\eps. 
\]
Combining these estimates, we obtain 
\[
P \le C\eps^{1/2},
\]
as needed. 

\section{Quantification for Metrics with Rotational Symmetry}

\label{Section:Symmetry}

In this section, we prove an estimate with a linear rate when $g_0$ has rotational symmetry. 

\begin{theorem}
\label{Theorem:Symmetry}
    Assume that $g_0$ is a smooth, rotationally symmetric metric on the unit ball $B$ in $\R^3$.  Let $g$ be another smooth metric on $B$.  Then there are constants $C$ and $\eps_0$ depending only on $g_0$ so that 
    \[
    \inf_B (R_g - R_{g_0}) \le C\|g-g_0\|_{C^0}
    \]
    provided that $\|g-g_0\|_{C^0}\le \eps_0$. 
\end{theorem}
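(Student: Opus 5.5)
The plan is to run the two-scale argument of Sections \ref{Section:C2} and \ref{Section:Smooth}, but to modify the monotone quantity so that the reference inequality $D_0^1 \ge 0$ becomes an \emph{identity} for $g_0$. Then no error term like $O(a^{4-2\tau})$ or $O(a^4)$ appears, and the scale $a$ can be fixed once and for all depending only on $g_0$. With $a$ fixed, the rescaling factors $a^2$ and $a^3$ are just constants, so the final bound comes out linear in $\eps$.

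\emph{Step 1: the reference data.} Since $g_0$ is rotationally symmetric, I take $u_0$ to be the radial $g_0$-Green's function, normalized as before. Each level set $\Sigma_t = \{u_0 = 1/t\}$ is a round sphere of the warped product, hence umbilic, and $|\nabla u_0|$ is constant on it. So in the derivative formula \eqref{eq: first derivative} the terms $|\nabla^{\Sigma_t}|\nabla u_0||^2/|\nabla u_0|^2$ and $|\mathring A|^2$ vanish identically. The only term that survives is $\frac34\big(\frac{2|\nabla u_0|}{u_0} - H\big)^2$, which is a nonzero function of $t$ alone (it vanishes only in the Euclidean or Schwarzschild case).

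\emph{Step 2: the modified functional.} I would use the extended monotonicity formulas of Appendix \ref{Appendix:F} to build a weighted version of $\widetilde F$, with the coefficients $4\pi t$, $t^2$, $t^3$ replaced by functions $\alpha(t)$, $\beta(t)$, $\gamma(t)$ determined by $g_0$. The goal is a derivative of the form
\[
\widetilde F'(t) = \int_{\{u=\frac1t\}} \Big[\tfrac{R_g - f}{2} + \tfrac{|\nabla^{\Sigma}|\nabla u||^2}{|\nabla u|^2} + \tfrac{|\mathring A|^2}{2} + c(t)\big(\kappa(t)|\nabla u| - H\big)^2\Big]\,da,
\]
with $c(t) > 0$ and with $\kappa(t)$ chosen so that $\kappa(t)|\nabla u_0| = H$ on $\Sigma_t$ for $g_0$. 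Concretely, I would write $F$ with general coefficients, differentiate using the Bochner formula and the Schoen--Yau rearrangement, and complete the square with a $t$-dependent factor. The resulting ODE constraints on $\alpha,\beta,\gamma$ should be solvable on a compact range $t\in[t_-,t_+]$ bounded away from $0$, because the $g_0$ data $|\nabla u_0|(t)$ and $H(t)$ are smooth and positive there.

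Two properties then follow. For $g_0$, $\widetilde F_0' \equiv 0$, so $\widetilde F_0$ is constant; I normalize this constant to $0$. For $g$, the bracket is at least $\frac12(R_g - f)$, so $\widetilde F$ is nondecreasing whenever $R_g \ge R_{g_0}$. The fact that the square term is nonnegative for \emph{every} metric, not just $g_0$, is exactly what makes this modification work.

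\emph{Step 3: the two-scale comparison at a fixed scale.} I fix $a$ small depending only on $g_0$, so that Section \ref{Section:PDE-Estimate} applies after rescaling, and define $u$ on $\Omega_a$ as before. Integrating $\widetilde F/t^3$ twice against the appropriate weights, and using Fubini and the co-area formula as in Section \ref{Section:C2}, produces bulk expressions $D$ and $D_0$. These are integrals of the form $\int \Psi(u)\,\tilde f(x)\,|\nabla u|^q\,dv$ with $q \in \{1,3\}$, Lipschitz $\Psi$ built from $\alpha,\beta,\gamma$, and bounded continuous $\tilde f$. Proposition \ref{Proposition:IntegralBound} then gives $|D - D_0| \le C\eps$ and $|D^1 - D^1_0| \le C\eps$. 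Now $D_0 = D_0^1 = 0$ exactly, so the mean value argument yields some $t_1$ with $\widetilde F(t_1) \ge -C\eps$. Monotonicity then gives
\[
D^1 \ge cP\int_{\{\frac1{8a}\le u\le \frac1{4a}\}} \frac{|\nabla u|}{u^2}\,dv - C\eps \ge c'P - C\eps,
\]
where $P = \inf_B(R_g - R_{g_0})$ may be assumed positive. Combining this with $D^1 \le C\eps$ gives $P \le C\eps$.

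\emph{Main obstacle.} The hardest step is Step 2: finding weights $\alpha,\beta,\gamma$ that keep a nonnegative perfect square for arbitrary $g$ while making it vanish exactly on the $g_0$ level spheres. Simply subtracting a fixed bulk term $h = \frac34(2|\nabla u_0|/u_0 - H)^2$ would give $\widetilde F_0' \equiv 0$, but it would destroy monotonicity for $g$, so that approach does not work. I would first try a reparametrization $v = \rho(u_0)$ that turns $g_0$ into a model in which the standard Agostiniani--Mazzieri--Oronzio functional is already an equality. I would then apply the same $\rho$ to $u$, which keeps the square structure for every metric, and check that the resulting weights are smooth and bounded on the fixed annulus.
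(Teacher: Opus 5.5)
Your Steps 1 and 3 match the paper's architecture: a fixed scale depending only on $g_0$, a reference functional that is identically zero, $|D-D_0|,|D^1-D^1_0|\le C\eps$ from Proposition~\ref{Proposition:IntegralBound}, and hence a linear rate. The gap is Step 2, which carries the whole content of the theorem, and your construction does not go through as written.

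With the labelling $u=1/t$ and the normalization in your display (coefficient $\frac12$ on $R_g-f$, coefficient $1$ on $|\nabla^{\Sigma}|\nabla u||^2/|\nabla u|^2$ and on $\frac12|\mathring A|^2$), you are forced to take $\beta=t^2$. Exact Gauss--Bonnet cancellation then forces $\alpha'=4\pi$, and the square coefficient is forced to be $c\equiv\frac34$. The single remaining function $\gamma$ must match both the $H|\nabla u|$ coefficient and the $|\nabla u|^2$ coefficient. These two conditions are compatible only if $\kappa$ satisfies a first-order ODE, which a general rotationally symmetric profile does not satisfy.

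Your fallback does not repair this. The function $\rho(u)$ is not $g$-harmonic, so the Agostiniani--Mazzieri--Oronzio formula cannot be applied to it. If instead you merely relabel the level sets of $u$ while keeping the coefficients $4\pi t$, $t^2$, $t^3$, exact Gauss--Bonnet cancellation survives only for $t=1/(u+\mathrm{const})$.

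To save your route you would have to let $\lambda=\beta/t^2$ be a free weight multiplying the whole bracket. The two matching conditions then become a second-order linear ODE for $\lambda$. The real task is to show this ODE has a solution that stays strictly positive on the entire annulus used in the two-scale argument. Smoothness and positivity of $|\nabla u_0|$ and $H$ do not give this. It can be arranged at a small enough fixed scale by perturbing from the Euclidean case, where $\lambda\equiv 1$ works, but your proposal does not contain that argument.

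The paper avoids the issue by not insisting on a pure square, which is exactly the idea you dismissed. Writing $u_0=b(r)$ and $H=f(u_0)|\nabla u_0|$, it takes $c_1=-1/b'$ (automatically positive) and $c_2=(c_1'+\frac32 f(b))/b'$. Then $\frac34\big(H-f(u)|\nabla u|\big)^2$ appears in $\overline F'$, together with an unsigned remainder $c_3(t)\int_{\{u=b(t)\}}|\nabla u|^2\,da$.

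This remainder is first order in $u$, so it is removed \emph{exactly} by subtracting the bulk integral $\int_{\{b(t)\le u\le b(a)\}}\frac{c_3(b^{-1}(u))}{|b'(b^{-1}(u))|}|\nabla u|^3\,dv$, computed from $u$ and $g$ themselves. By the co-area formula its $t$-derivative cancels the remainder for every metric. Hence $\widetilde F'\ge\frac12\int_{\{u=b(t)\}}(R_g-R_{g_0})\,da$ still holds while $\widetilde F_0\equiv 0$. The new term is also of the form covered by Proposition~\ref{Proposition:IntegralBound} with $q=3$.

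Your objection is valid for subtracting the square $\frac34(2|\nabla u_0|/u_0-H)^2$, which contains $H$ and is frozen at $g_0$. It does not apply to a first-order remainder evaluated on $u$. With this device there is no ODE to solve and no positivity condition to verify.
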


In this case, we can write the metric in the warped product form $g_0 = dr^2 + \psi(r)^2 g_{S^2}$. Define $u_0$ to be the $g_0$-harmonic function on $B_{1}-B_{1/100}$ which is equal to $100$ on $\bd B_{1/100}$ and equal to $0$ on $\bd B_{1}$.  By symmetry, $u_0$ is radial and therefore we can find functions $b$ and $f$ so that $u_0 = b(r)$ and $H = f(u_0) \vert \grad u_0\vert$. We note that $b$ is strictly decreasing with $b'(t) < 0$.  Then we define a modified $F$ functional as in Appendix \ref{Appendix:F}.  More precisely, we define 
\[
\overline F_0(t) = 4\pi t - c_1(t) \int_{\{u_0=b(t)\}} H\vert \grad u_0\vert \,da + c_2(t) \int_{\{u_0=b(t)\}} \vert \grad u_0\vert^2\, da 
\]
where $c_1$ and $c_2$ are as in Appendix \ref{Appendix:F}. We fix the scale $a = \frac 1 {32}$. Then for $t\in [a,16a]$ define 
\begin{align*}
    \widetilde F_0(t) &= \overline F_0(t) - \overline F_0(a) - \frac 1 2\int_{\{b(t) \le u_0 \le b(a)\}} R_{g_0}(x) \vert \frac{\vert \grad u_0\vert }{\vert b'(b^{-1}(u_0))\vert}  \, dv \\ &\qquad - \int_{\{b(t) \le u_0 \le b(a)\}} \frac{c_3(b^{-1}(u_0))}{\vert b'(b^{-1}(u_0))\vert} \vert \grad u_0\vert^3\, dv
\end{align*}
where $c_3$ is as in Appendix \ref{Appendix:F}. Then by Proposition \ref{Proposition:Modified-F-Derivative} and the co-area formula, we have 
\[
\widetilde F_0'(t) = \overline F_0'(t) - \int_{\{u_0=b(t)\}} \frac {R_{g_0}}{2}\, da - c_3(t) \int_{\{u_0 = b(t)\}}\vert \grad u_0\vert^2\, da = 0 
\]
for all $t$. Hence we have $\widetilde F_0 \equiv 0$. 

Next, we define $u$ to be the $g$-harmonic function which is equal to $100$ on $\bd B_{1/100}$ and equal to 0 on $\bd B_1$. It is easy to see that the level sets of $u$ are closed and connected. We define 
\[
\overline F(t) = 4\pi t - c_1(t) \int_{\{u=b(t)\}} H\vert \grad u\vert \,da + c_2(t) \int_{\{u=b(t)\}} \vert \grad u\vert^2\, da 
\]
and then set 
\begin{align*}
    \widetilde F(t) &= \overline F(t) - \overline F_0(a) - \frac 1 2\int_{\{b(t) \le u \le b(a)\}} R_{g_0}(x) \vert \frac{\vert \grad u\vert }{\vert b'(b^{-1}(u))\vert}  \, dv \\ &\qquad - \int_{\{b(t) \le u \le b(a)\}} \frac{c_3(b^{-1}(u))}{\vert b'(b^{-1}(u))\vert} \vert \grad u\vert^3\, dv.
\end{align*}
We now proceed as before and define 
\begin{gather*}
    E_0(s) = \int_{a+s}^{2a+2s} \frac{\widetilde F_0(t)}{t c_1(t)}\, dt, \quad D_0 =\int_0^a E_0(s)\, ds\\
    E(s)=\int_{a+s}^{2a+2s}\frac{\widetilde F(t)}{t c_1(t)}\, dt,\quad D =\int_0^a E(s)\, ds. 
\end{gather*}
By equation \eqref{eq:W derivative identity} and the co-area formula and reasoning as in the earlier sections, we can show that $D-D_0$ is a sum of terms that can be estimated by Proposition \ref{Proposition:IntegralBound}. 

Thus we have $\vert D-D_0\vert\le C\eps$. 
Moreover, in the rotationally symmetric case, we have $D_0 = 0$, and thus we obtain $D \ge -C\eps$.  Since $a$ is a fixed positive number, there must be some point $t_1\in [a,4a]$ with $\widetilde F(t_1)\ge -C\eps$. 

For convenience, let $P = \inf_B (R_g - R_{g_0})$.  We can assume that $P>0$ as otherwise there is nothing to prove. Again using Proposition \ref{Proposition:Modified-F-Derivative} in Appendix \ref{Appendix:F} and the co-area formula we have 
\[
\widetilde F'(t) \ge \frac{1}{2} \int_{\{u=b(t)\}} R_g(x) - R_{g_0}(x)\, da \ge \frac{P}{2} \int_{\{u=b(t)\}} 1\, da \ge 0. 
\]
In particular, we have $\widetilde F(4a) \ge -C\eps$. Further, for any $t\in [8a,16a]$, we can estimate 
\begin{align*}
    \widetilde F(t) &\ge \widetilde F(a) + \frac{P}{2} \int_{4a}^{8a} \int_{\{u=b(s)\}} 1\, da\, ds \\
    &\ge -C\eps + \frac{P}{2} \int_{\{b(8a)\le u\le b(4a)\}}\frac{\vert \grad u\vert }{\vert b'(b^{-1}(u))\vert}\, dv. 
\end{align*}
Since $a$ is a fixed positive number, the PDE estimates imply that 
\[
\int_{\{b(8a)\le u\le b(4a)\}}\frac{\vert \grad u\vert }{\vert b'(b^{-1}(u))\vert}\, dv \ge \int_{\{b(8a)\le u_0\le b(4a)\}}\frac{\vert \grad u_0\vert }{\vert b'(b^{-1}(u_0))\vert}\, dv - C\eps \ge C>0. 
\]
Hence we have 
\begin{equation}
\label{Equation:1}
\widetilde F(t) \ge CP - C\eps
\end{equation}
for every $t\in [8a,16a]$.

Finally, we repeat the argument at scale $8a$. Define 
\begin{gather*}
    E_0^1(s) = \int_{8a+s}^{16a+2s} \frac{\widetilde F_0(t)}{t c_1(t)}\, dt, \quad D_0^1 =\int_0^{8a} E_0^1(s)\, ds\\
    E^1(s)=\int_{8a+s}^{16a+2s}\frac{\widetilde F(t)}{t c_1(t)}\, dt,\quad D^1 =\int_0^{8a} E^1(s)\, ds. 
\end{gather*}
Again we have $D_0^1 = 0$ and the PDE estimates imply that $\vert D^1 - D^1_0\vert \le C\eps$. In particular, we have $D^1_0 \le C\eps$. On the other hand, since $a$ is a fixed positive number, the estimate \eqref{Equation:1} implies that 
\[
D^1_0 \ge CP - C\eps.
\]
Combining these two facts we obtain $P \le C\eps$, as needed. 

\section{Weak Convergence of Metrics}
\label{Section:Measure}

In this section, we prove that scalar curvature lower bounds are preserved under weaker convergence of metrics under a small Lipschitz distance assumption. 

\begin{theorem}
\label{Theorem:Measure}
    There is a universal constant $\eps_0 > 0$ with the following property. Let $M^3$ be a smooth, closed three manifold. Assume that $g_k$ and $g_0$ are smooth Riemannian metrics on $M$ and that $g_k$ converges to $g_0$ in measure. Further assume that the identity map $(M,g_k)\to (M,g_0)$ is $(1+\eps_0)$-bi-Lipschitz for every $k$. If all the metrics $g_k$ have non-negative scalar curvature, then $g_0$ also has non-negative scalar curvature. 
\end{theorem}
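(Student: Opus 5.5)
Suppose for contradiction that $R_{g_0}(p) < 0$ at some point $p$. Then $R_{g_0} \le -2\delta < 0$ on a neighborhood of $p$. We run the harmonic-function argument of Section~\ref{Section:C2} at a fixed small scale $a$ around $p$, which we never shrink with $k$. The aim is to show that the quantities $D$, $D^1$ built from $g_k$ converge to those built from $g_0$ as $k\to\infty$. No rate is needed; convergence alone suffices.

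\textbf{Setup.} We take $g_0$-normal coordinates at $p$ and choose $a$ so small that:
\begin{itemize}
\item on the annulus $\Omega_a$ the metric $g_0$ is $C^0$-close to Euclidean;
\item the bound $D^1_0 = O(a^{4})$ (Proposition~\ref{Proposition:DerivativeEstimate}, Corollary~\ref{Corollary:Small-Scale}) is small compared with $\delta a^2$.
\end{itemize}
As in Section~\ref{Section:C2} we take $f = R_{g_0}$. Let $u_k$ be the $g_k$-harmonic function on $\Omega_a$ with $u_k = u_0$ on $\bd\Omega_a$. The monotonicity of $\widetilde F_k$ uses $R_{g_k} - f \ge 0$, and this is exactly where the hypotheses enter: since $R_{g_k} \ge 0 > R_{g_0}$ near $p$, we get $R_{g_k} - f \ge 2\delta$ there. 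Then, verbatim as in Section~\ref{Section:C2}, we obtain
\[
D^1_k \ \ge\ c\,\delta\, a^2 \;-\; C\,|D_k - D_0|,
\qquad
D^1_k \ \le\ D^1_0 + |D^1_k - D^1_0|.
\]
Once $|D_k - D_0|$ and $|D^1_k - D^1_0|$ are shown to tend to $0$, these two inequalities contradict the smallness of $D^1_0$ relative to $\delta a^2$.

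\textbf{Uniform ellipticity from the Lipschitz hypothesis.} After rescaling $\Omega_a$ to unit size, the bi-Lipschitz assumption gives $(1+\eps_0)^{-2} g_0 \le g_k \le (1+\eps_0)^2 g_0$. Combined with the near-Euclidean $g_0$, this yields coefficients $A_k = \sqrt{|g_k|}\, g_k^{-1}$ whose ellipticity ratio is within $C\eps_0$ of $1$. This is why $\eps_0$ must be a small universal constant. It lets us apply Meyers' theorem with a fixed exponent $p > 3$, say $p = 4$, uniformly in $k$, exactly as in Section~\ref{Section:PDE-Estimate}.

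\textbf{The main obstacle.} We no longer have $\|A_k - A_0\|_{L^\infty} \to 0$. We only have $A_k \to A_0$ in measure, with $A_k$ uniformly bounded. The energy argument still gives
\[
\lambda\,\|\grad(u_k - u_0)\|_{L^2} \ \le\ \|(A_k - A_0)\grad u_0\|_{L^2},
\]
and the right side tends to $0$ by dominated convergence, since $\grad u_0$ is bounded. Applying Meyers' estimate to the equation for $u_k - u_0$ gives a uniform bound $\|\grad(u_k - u_0)\|_{L^p(\Omega')} \le C$ for some $p > 3$. Interpolating between $L^2 \to 0$ and this uniform $L^p$ bound gives $\grad u_k \to \grad u_0$ in $L^r$ for every $r < p$. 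Sobolev embedding ($p > 3$) together with Arzel\`a--Ascoli then gives $u_k \to u_0$ uniformly on $\Omega'$.

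\textbf{Passing to the limit in the integrals.} We redo Proposition~\ref{Proposition:IntegralBound} with $o(1)$ in place of $C\eps$:
\begin{itemize}
\item The level-set domains are handled by uniform convergence of $u_k$ and the coarea bound for $u_0$, exactly as before.
\item In the term $III$, the factor $\|\sqrt{|g_k|} - \sqrt{|g_0|}\|_{L^1} \to 0$ by convergence in measure and boundedness.
\item In the term $II$, the lemma's error $C\,|g_k - g_0|\,|\grad u_0|^q$ is integrated rather than bounded in sup norm, so it also tends to $0$ in $L^1$. The gradient-difference part is controlled by H\"older's inequality using the $L^r$ convergence with $r$ large; for $q = 3$ we use $p > 3$.
\end{itemize}
This yields $D_k \to D_0$ and $D^1_k \to D^1_0$, and the contradiction follows.
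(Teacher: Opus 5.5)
Your proposal is correct and follows essentially the same route as the paper: you zoom in at a point where $R_{g_0}<0$, fix a small scale $a$, and take $f=R_{g_0}$. You then use non-quantitative versions of the Section~3 estimates to get $D_k\to D_0$ and $D^1_k\to D^1_0$, and reach a contradiction between $D^1_0=O(a^4)$ and the $c\,\delta\, a^2$ lower bound. The one minor difference is that you get $W^{1,r}$ convergence by interpolating a uniform Meyers bound with the $L^2$ gradient convergence, whereas the paper applies Meyers' estimate directly using $\|(A_0-A_k)\nabla u_0\|_{L^p}\to 0$; both work.
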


First we make some preliminary reductions.   Suppose $g_k$ and $g_0$ are as in Theorem \ref{Theorem:Measure}.  Assume that each metric $g_k$ has non-negative scalar curvature, but suppose for contradiction that there is a point $p\in M$ with $R_{g_0}(p) < 0$. For $r > 0$, let $\phi\colon B_r\to M$ be the $g_0$-exponential map at $p$. Assuming that $r$ is sufficiently small, the map $\phi$ will be $(1+\eps_0)$-bi-Lipschitz as a map $(B_r,g_{\text{euc}}) \to (\phi(B_r),g_0)$, and the scalar curvature of $g_0$ on $\phi(B_r)$ will be negative.  We can also pull-back the metrics $g_k$ to $B_r$ via $\phi$. Then on $B_r$ the metrics $\phi^* g_0$, $\phi^* g_k$, and $g_{\text{euc}}$ are all pairwise $(1+\eps_0)^2$-bi-Lipschitz. We then rescale to unit size and define metrics on $B_1$ by setting $(\tilde g_0)(x) = (\phi^* g_0)(rx)$ and  $(\tilde g_k)(x) = (\phi^* g_k)(rx)$. Note that $\tilde g_k$ has non-negative scalar curvature on $B_1$, and $\tilde g_0$ has negative scalar curvature on $B_1$, and $\tilde g_k \to \tilde g_0$ in measure, and the metrics $\tilde g_k$ and $\tilde g_0$ and $g_{\text{euc}}$ are all pairwise $(1+\eps_0)^2$-bi-Lipschitz.  Hence, dropping the tildes and  shrinking $\eps_0$, Theorem \ref{Theorem:Measure} will follow if we can prove the next claim: 

\begin{claim}
    There is a universal constant $\eps_0$ with the following property. Assume that $g_k$ and $g_0$ are smooth metrics on the unit ball $B$ in $\R^3$ and that $g_k \to g$ in measure. Further assume that $g_k$, $g_0$, and $g_{\text{euc}}$ are all pairwise $(1+\eps_0)$-bi-Lipschitz and that $g_0$ is in geodesic normal coordinates. Then it is impossible that all the metrics $g_k$ have non-negative scalar curvature while $g_0$ has negative scalar curvature. 
\end{claim}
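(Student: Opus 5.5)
The plan is to run the two-scale argument of Sections \ref{Section:C2}--\ref{Section:Smooth} with the target function $f\equiv 0$. The $C^0$ smallness $\|g-g_0\|_{C^0}\le\eps$ is replaced by the qualitative statement that $g_k\to g_0$ in measure with uniform bi-Lipschitz control. The source of the contradiction is that for $g_0$ the functional $\widetilde F_0$ is \emph{strictly decreasing} at small scales because $R_{g_0}(0)<0$. For each $g_k$, by contrast, the analogous functional is nondecreasing because $R_{g_k}\ge 0$.

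\emph{Step 1: asymptotics for $g_0$.} Let $u_0=\Gamma-e(0)$ be the normalized Green's function of $g_0$ at the origin, as in Section \ref{Section:Smooth}. Define $\widetilde F_0(t)=4\pi t-t^2\int_{\{u_0=1/t\}}H|\grad u_0|\,da+t^3\int_{\{u_0=1/t\}}|\grad u_0|^2\,da$, i.e.\ the functional with $f=0$. By Theorem \ref{Theorem:Derivative}, $\widetilde F_0'(t)=\frac12\int_{\{u_0=1/t\}}R_{g_0}\,da+(\text{nonnegative terms})$. Proposition \ref{Proposition:DerivativeEstimate} says these nonnegative terms are $O(t^4)$, and the area of $\{u_0=1/t\}$ is $4\pi t^2(1+O(t^2))$. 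Writing $\kappa=-\frac{2\pi}{3}R_{g_0}(0)>0$, this gives
\[
\widetilde F_0(t)=-\kappa t^3+O(t^5).
\]
Fix a small scale $a$, depending only on $g_0$, so that on $[a,32a]$ the $O(t^5)$ error is negligible against $\kappa t^3$. Then $D_0$, built from $[a,4a]$, and $D^1_0$, built from $[8a,32a]$, are as in Section \ref{Section:C2}: explicit negative multiples of $\kappa a^2$ up to small errors. In particular, comparing the weight $t^{-3}$ on the two ranges, $D^1_0$ is strictly smaller than the value a function equal to $-\kappa(4a)^3$ on $[8a,32a]$ would produce. The gap is a definite amount $\delta\,\kappa a^2$ with $\delta>0$ universal, because $(8a)^3>(4a)^3$.

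\emph{Step 2: stability under convergence in measure.} Let $u_k$ be the $g_k$-harmonic function on the annulus $\Omega_a$ with $u_k=u_0$ on $\bd\Omega_a$. Rescale to unit size as in Proposition \ref{prop: D-D0}. The coefficient matrices $A_k$ are uniformly within $C\eps_0$ of the identity, and this is exactly where the universal smallness of $\eps_0$ is used: it makes Meyers' estimate hold with some fixed $p>3$. Moreover $A_k\to A_0$ in measure while staying uniformly bounded. The difference $f_k=u_k-u_0$ solves $\div(A_k\grad f_k)=\div((A_0-A_k)\grad u_0)$, and the right-hand side tends to $0$ in every $L^p$ by dominated convergence. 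Repeating the proofs of Section \ref{Section:PDE-Estimate} verbatim then gives $f_k\to0$ in $W^{1,p}(\Omega')$ and in $C^{0,\alpha}$. The same splitting as in Proposition \ref{Proposition:IntegralBound} yields $D_k\to D_0$ and $D_k^1\to D_0^1$. The only change is that $\|g_k-g_0\|_{C^0}$-terms become $\int|A_k-A_0|\cdot(\text{bounded})$, which tend to $0$ by convergence in measure.

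\emph{Step 3: monotonicity for $g_k$ and the contradiction.} Since $R_{g_k}\ge0$, Theorem \ref{Theorem:Derivative} shows that $\widetilde F_k$ is nondecreasing on the range of the annulus. From $D_k\to D_0$, the mean-value argument of Section \ref{Section:C2} produces $t_1\in[a,4a]$ with $\widetilde F_k(t_1)\ge \min_{[a,4a]}\widetilde F_0-o(1)\ge-\kappa(4a)^3(1+o_a(1))-o_k(1)$. Monotonicity then gives $\widetilde F_k\ge-\kappa(4a)^3(1+o_a(1))-o_k(1)$ on all of $[8a,32a]$. Integrating this bound to form $D_k^1$ gives a lower bound that beats $D_0^1$ by $\delta\kappa a^2$, which contradicts $D_k^1\to D^1_0$ as $k\to\infty$.

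The main obstacle is Step 3's bookkeeping. One must check that the weights in $E$ and $D$ (integration of $\widetilde F/t^3$ over $[a+s,2a+2s]$, then over $s$) preserve a quantitative gap between ``$-\kappa t^3$ evaluated on $[8a,32a]$'' and ``the constant $-\kappa(4a)^3$''. One must also check that the $O(t^5)$ errors and the $O(a^2)$ non-Euclidean distortions in the rescaled integrals are smaller than this gap for $a$ chosen small depending on $g_0$. The constant $\eps_0$ itself enters only through the Meyers exponent and must not depend on $a$.
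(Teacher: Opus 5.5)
Your argument is correct, and it takes a genuinely different route from the paper's.

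\textbf{The paper's route.} The paper keeps the comparison function $f=R_{g_0}$ of Sections~\ref{Section:C2}--\ref{Section:Smooth}. Then $\widetilde F_0'$ consists only of the nonnegative terms, so $\widetilde F_0=O(t^5)$ and $D_0,D_0^1=O(a^4)$. Meanwhile
\[
\widetilde F_k'\ge\frac12\int_{\{u_k=1/t\}}(R_{g_k}-R_{g_0})\,da\ge\frac P2\,\area(\{u_k=\tfrac1t\})
\]
with $P>0$. Integrating over $[4a,8a]$ and using the lower bound $\int_{\{\frac1{8a}\le u_k\le\frac1{4a}\}}|\grad u_k|\,u_k^{-2}\,dv\ge Ca^3$ gives $D_k^1\ge Ca^2$. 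This contradicts $D_k^1\to D_0^1=O(a^4)$.

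\textbf{Your route.} You take $f\equiv0$ instead. The reference functional then itself decreases like $-\kappa t^3$, while every $\widetilde F_k$ is nondecreasing, and the contradiction is a mismatch of shapes. This buys some simplifications:
\begin{itemize}
\item $D_k$ and $D_k^1$ involve only the $|\grad u_k|^3/u_k^3$ integrals, with no bulk curvature term and no $M(a)$.
\item You never need the lower bound on $\int|\grad u_k|/u_k^2$.
\item All you actually use about $g_0$ is that $\widetilde F_0$ is strictly decreasing on $[a,32a]$, i.e.\ that the nonnegative terms in $\widetilde F_0'$ are $o(t^2)$. So the $O(t^{4-2\tau})$ bound of the $C^2$ case would already suffice, and neither $\widetilde F_0(0^+)=0$ nor the exact value of $\kappa$ is essential.
\end{itemize}
The paper's route, in turn, reuses the template of its quantitative theorems, where one must compare against a non-constant $R_{g_0}$ rather than the constant threshold $0$.

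\textbf{A bookkeeping correction in Step 3.} The mean-value point of Section~\ref{Section:C2} only gives
\[
\widetilde F_k(t_1)=\frac{t_1^3}{a(a+s_1)}\,D_k\ge 32a\,D_k .
\]
In general this is weaker than $\min_{[a,4a]}\widetilde F_0$ by up to a factor $6$. With your explicit shape it still yields roughly $-48\kappa a^3\ge-64\kappa a^3$, so your chain closes. It is cleaner to use monotonicity of $\widetilde F_k$ at $t=4a$ directly:
\[
D_k\le\widetilde F_k(4a)\int_0^a\int_{a+s}^{2a+2s}t^{-3}\,dt\,ds=\frac{3}{16a}\,\widetilde F_k(4a).
\]
Hence $\liminf_k\widetilde F_k(4a)\ge\frac{16a}{3}D_0\approx-8\kappa a^3$. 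Monotonicity on $[8a,32a]$ then gives $\liminf_k D_k^1\ge-\frac{3}{16}\kappa a^2$, which is far above $D_0^1\approx-96\kappa a^2$. So the gap is much larger than the $\delta\kappa a^2$ you budgeted.
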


To prove the claim, we argue in the same way as the previous sections.   Let $\Gamma$ be a Green's function for $\lap_{g_0}$ as in Theorem \ref{Theorem:GreenSmooth}.  Write $\Gamma(x) = \frac{1}{\vert x\vert} + e(x)$ and then define $u_0(x) = \Gamma(x) - e(0)$.  Consider the functional 
\[
\widetilde F_0(t) = 4\pi t - t^2 \int_{\{u_0=\frac 1 t\}} H\vert \grad u_0\vert \, da + t^3 \int_{\{u_0=\frac 1 t\}} \vert \grad u_0\vert^2\, da - \frac 1 2 \int_{\{\frac 1 t \le u_0\}} R_{g_0}(x) \frac{\vert \grad u_0\vert}{u_0^2}\, dv.
\]
Then we have 
\begin{align*}
\widetilde F_0'(t) &=  \int_{\{u=\frac 1 t\}}  \frac{\vert \grad^{\Sigma_t}\vert \grad u\vert\vert^2}{\vert \grad u\vert^2}  + \frac{\vert \mathring A\vert^2}{2} + \frac 3 4 \left(\frac{2\vert \grad u\vert}{u} - H\right)^2\, da = O(t^4)
\end{align*} 
as $t\to 0$. 
Fix a small scale $a > 0$ to be specified later. Let 
\[
M(a) = \int_{\{\frac 1 a \le u_0\}} R_{g_0}(x) \frac{\vert \grad u_0\vert}{u_0^2}\, dv. 
\]
Now define $\Omega_a = \{u_0 \ge \frac{1}{75a}\} - \{u_0 \ge \frac {75}{a}\}$. Then let $u_k$ be the $g_k$-harmonic function in $\Omega_a$ which agrees with $u$ on $\bd \Omega_a$. We consider the $\widetilde F_k$ functional for $u_k$ given by 
\[
\widetilde F_k(t) = 4\pi t - t^2 \int_{\{u_k=\frac 1 t\}} H\vert \grad u_k\vert\, da + t^3 \int_{\{u_k = \frac 1 t\}} \vert \grad u_k\vert^2\, da - \frac 1 2 \int_{\{\frac 1 t \le u_k \le \frac 1 a\}} R_{g_0}(x) \frac{\vert \grad u_k\vert}{u_k^2}\,dv - \frac{M(a)}{2}. 
\]
and note that $\widetilde F_k'(t) \ge 0$ since $g_k$ has non-negative scalar curvature. We now proceed to define 
\begin{gather*}
    E_0(s) = \int_{a+s}^{2a+2s} \frac{\widetilde F_0(t)}{t^3}\, dt, \quad D_0 =\int_0^a E_0(s)\, ds\\
    E_k(s)=\int_{a+s}^{2a+2s}\frac{\widetilde F_k(t)}{t^3}\, dt,\quad D_k =\int_0^a E(s)\, ds. 
\end{gather*}
We claim that $D_k\to D_0$ as $k\to \infty$. 

It suffices to verify that the PDE estimates of Section \ref{Section:PDE-Estimate} can be adapted to this setting. 
Let $A_k = (a^{ij}_k)$ where $a^{ij}_k = g_k^{ij}\sqrt{\vert g_k\vert}$ and $A_0 = (a_0^{ij})$ where $a_0^{ij} = g_0^{ij}\sqrt{\vert g_0\vert}$ so that 
\[
L_ku_k := -\div(A_k(x) u_k(x)) = 0 \quad \text{and} \quad L_0 u_0 := -\div(A_0(x)u_0(x)) = 0.
\]
Let $\Omega = \Omega_a$. By choosing $\eps_0$ close enough to 0, we can ensure that the ellipticity ratios of each $L_k$ are close enough to 1 to apply Meyers theorem with $p=4$. Then we verify the following properties.

\begin{prop} 
We have $\|\grad u_k - \grad  u_0\|_{L^2(\Omega)} \to 0$ as $k\to \infty$.
\end{prop}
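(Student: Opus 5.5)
We adapt the energy argument from the first estimate of Section~\ref{Section:PDE-Estimate}, replacing the uniform bound $\|A_k - A_0\|_{L^\infty}$ by convergence in measure plus a uniform $L^\infty$ bound. Set $f_k = u_k - u_0$, which vanishes on $\bd\Omega$. As in that proof,
\[
\div(A_k\grad f_k) = \div((A_0 - A_k)\grad u_0),
\]
so testing against $f_k$ and using ellipticity gives
\[
\lambda \int_\Omega \vert \grad f_k\vert^2\,dx \le \int_\Omega \la \grad f_k, (A_0-A_k)\grad u_0\ra\, dx \le \|\grad f_k\|_{L^2(\Omega)} \, \|(A_0-A_k)\grad u_0\|_{L^2(\Omega)}.
\]
Hence
\[
\|\grad f_k\|_{L^2(\Omega)} \le \lambda^{-1}\|(A_k-A_0)\grad u_0\|_{L^2(\Omega)},
\]
and $\lambda$ is bounded below uniformly in $k$ by the bi-Lipschitz assumption.

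It therefore suffices to show $\|(A_k - A_0)\grad u_0\|_{L^2(\Omega)}\to 0$. Three facts give this.
\begin{enumerate}[(i)]
\item The bi-Lipschitz hypothesis gives a uniform bound $|A_k|,|A_0|\le C$. This is because $a^{ij}_k = g_k^{ij}\sqrt{|g_k|}$ is a continuous function of $g_k$ on the compact set of matrices that are $(1+\eps_0)$-comparable to the identity.
\item That same continuity, uniform on this compact set, means convergence in measure of $g_k \to g_0$ implies convergence in measure of $A_k\to A_0$ on $\Omega$.
\item $\grad u_0$ is bounded on $\Omega$. Indeed, $\Omega=\Omega_a$ stays away from the pole and $u_0$ is smooth up to the boundary of the annulus. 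At worst, $\grad u_0 \in L^2(\Omega)$ suffices.
\end{enumerate}

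Now fix $\delta>0$ and split $\Omega$ into $S_k=\{|A_k-A_0|>\delta\}$ and its complement. On the complement the integrand is at most $\delta^2|\grad u_0|^2$. On $S_k$ it is at most $4C^2|\grad u_0|^2$, and $|S_k|\to 0$. Absolute continuity of the integral of $|\grad u_0|^2\in L^1$ then gives
\[
\limsup_k \|(A_k-A_0)\grad u_0\|_{L^2}^2 \le \delta^2 \|\grad u_0\|_{L^2}^2 .
\]
Letting $\delta\to0$ finishes the proof. Dominated convergence along subsequences would work equally well, since every subsequence has a further a.e.\ convergent subsequence.

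The only point needing care is the step from convergence in measure of the metrics to that of the coefficient matrices $A_k$, together with the uniform ellipticity. Both follow from the pairwise $(1+\eps_0)$-bi-Lipschitz hypothesis, which confines all $g_k$ to a fixed compact set of positive definite matrices. There is no rate here, unlike the $C\eps$ estimate, and none is needed.
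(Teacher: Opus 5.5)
Your proposal is correct and follows essentially the same route as the paper: the energy estimate reduces everything to $\|(A_0-A_k)\grad u_0\|_{L^2(\Omega)}$, and $\Omega$ is then split into a small-measure set where the metrics differ by more than a threshold and its complement, using the uniform bound on $A_k$ coming from the bi-Lipschitz hypothesis. The only cosmetic difference is that the paper splits according to $\vert g_k-g_0\vert\ge\eps$ and uses $\|\grad u_0\|_{L^\infty}$. You instead split on $\vert A_k-A_0\vert>\delta$ and use absolute continuity of $\int\vert\grad u_0\vert^2$, which is slightly more general but changes nothing here.
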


\begin{proof} 
Define $f_k = u_k -  u_0$.  Since $f_k$ vanishes on $\bd \Omega$, we have 
\[
\int_\Omega \la A_k(x) \grad f_k,\grad f_k\ra\, dx = -\int_\Omega f_k \div(A_k(x) \grad f_k)\, dx.
\] 
Now note that 
\begin{align*}
\div(A_k(x) \grad f_k) &= \div(A_k(x) \grad u_k) - \div(A_k(x) \grad  u_0) \\
&= -\div(A_k(x) \grad  u_0)\\
&=  \div((A_0(x)-A_k(x)) \grad  u_0).
\end{align*}
Thus we have 
\begin{align*}
\int_\Omega \la A_k(x) \grad f_k,\grad f_k\ra\, dx &= \int_\Omega f_k\div((A_k(x)-A_0(x)) \grad u_0)\, dx \\
&=  \int_\Omega \la \grad f_k, (A_0(x)-A_k(x)) \grad u_0\ra \, dx. 
\end{align*} 
Therefore, with $\lambda$ as the ellipticity constant, we can estimate 
\begin{align*}
\lambda \int_\Omega \vert \grad f_k\vert^2\, dx &\le \int_\Omega \la A_k(x)\grad f_k,\grad f_k\ra\, dx \\
&=  \int_\Omega \la \grad f_k, (A_0(x)-A_k(x)) \grad u_0\ra \, dx\\
&\le  \|\grad f_k\|_{L^2} \|(A_0(x) - A_k(x))\grad u_0\|_{L^2}. \phantom{\int}
\end{align*} 
This implies that 
\[
\|\grad f_k\|_{L^2} \le C \|(A_0(x)-A_k(x))\grad u_0\|_{L^2}. 
\]
Now fix some $\eps > 0$ and define 
\[
U_k = \left\{x\in B: \vert g_k(x) - g_0(x)\vert \ge \eps\right\}.
\]
By definition of convergence in measure, we have $\vert U_k\vert \to 0$ as $k\to \infty$. 

We now estimate 
\begin{align*}
    \|(A_0(x) - A_k(x))\grad u_0\|_{L^2}^2 &= \int_{\Omega} \vert A_0(x) - A_k(x)\vert^2 \vert \grad u_0\vert^2\, dx\\
    &= \int_{\Omega - U_k} \vert A_0(x) - A_k(x)\vert^2 \vert \grad u_0\vert^2\, dx + \int_{U_k} \vert A_0(x) - A_k(x)\vert^2 \vert \grad u_0\vert^2\, dx\\
    &\le \eps^2 \|\grad u_0\|_{L^2}^2 + (\|A_0\|_{L^\infty} + \|A_k\|_{L^\infty})^2 \|\grad u_0\|_{L^\infty}^2 \vert U_k\vert \phantom{\int}\\
    &\le  C\eps^2 + C\vert U_k\vert. \phantom{\int}
\end{align*}
Here we have used the fact that $u_0$ is a fixed function and the fact that $\|A_k\|_{L^\infty}$ is uniformly bounded by the bi-Lipschitz assumption. Thus we have 
\[
\|(A(x) - A_k(x))\grad u\|_{L^2}^2 \le C\eps^2
\]
provided $k$ is sufficiently large. This implies that 
\[
\|\grad f_k\|_{L^2} \le C\eps
\]
for all sufficiently large $k$. Since $\eps$ was arbitrary, this proves that $\|\grad f_k\|_{L^2}\to 0$. 
\end{proof} 

This immediately implies that $\|u_k - u_0\|_{L^2(\Omega)} \to 0$ by the Poincare inequality. Next, we apply Meyers theorem to show that the gradients converge in $L^p$. 

\begin{prop}
For any fixed $2\le p \le 4$ and any fixed $\Omega'$ compactly contained in $\Omega$ we have $\|\grad u_k - \grad u_0\|_{L^p(\Omega')} \to 0$ as $k\to \infty$. 
\end{prop}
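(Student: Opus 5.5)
We prove this by adapting the Meyers-type argument from Section \ref{Section:PDE-Estimate}. Set $f_k = u_k - u_0$. As in the proof of the previous proposition,
\[
L_k f_k = -\div(A_k \grad f_k) = \div\big((A_k - A_0)\grad u_0\big) =: \div(X_k).
\]
The ellipticity ratios of the $L_k$ are uniformly close to one, because of the bi-Lipschitz assumption with the small universal $\eps_0$. Meyers' theorem (in the form of the interior estimate on $\Omega'$, with $p=4$ fixed) therefore applies to every $L_k$ with a constant independent of $k$. Covering $\Omega'$ by finitely many balls $B_r(x)$ with $B_{2r}(x)\subset\Omega$, with $r$ depending only on $d(\Omega',\bd\Omega)$, gives
\[
\|\grad f_k\|_{L^4(\Omega')} \le C\Big[\|(A_k-A_0)\grad u_0\|_{L^4(\Omega)} + \|f_k\|_{L^2(\Omega)}\Big],
\]
where $C$ depends on $\Omega'$ and $\Omega$ but not on $k$.

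Next we show that both terms on the right tend to zero. The second term goes to zero by the previous proposition together with the Poincaré inequality. For the first term we repeat the convergence-in-measure splitting. Fix $\eta>0$ and let $U_k=\{|g_k-g_0|\ge \eta\}$, so that $|U_k|\to 0$. On $\Omega-U_k$ we have $|A_k-A_0|\le C\eta$. On $U_k$ the quantity $|A_k-A_0|$ is uniformly bounded, because the $A_k$ are bounded by the bi-Lipschitz assumption, and $\grad u_0$ is bounded on $\Omega$, since $u_0$ is a fixed smooth function on the closed annulus away from the pole. Hence
\[
\|(A_k-A_0)\grad u_0\|_{L^4(\Omega)}^4 \le C\eta^4 + C|U_k|,
\]
which is at most $C\eta^4$ for $k$ large. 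Since $\eta$ is arbitrary, this term tends to zero, and so $\|\grad f_k\|_{L^4(\Omega')}\to 0$.

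Finally, we pass to general $2\le p\le 4$. Since $\Omega'$ has finite measure, Hölder's inequality gives $\|\grad f_k\|_{L^p(\Omega')}\le |\Omega'|^{\frac1p-\frac14}\|\grad f_k\|_{L^4(\Omega')}\to 0$. Alternatively, one can interpolate between the $L^2$ convergence and the $L^4$ convergence.

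The main point requiring care is the first step, namely that Meyers' estimate holds uniformly in $k$. The coefficients $A_k$ converge only in measure, not uniformly, so uniformity has to come entirely from the ellipticity ratio. This is exactly why the universal smallness $\eps_0$, chosen in terms of the Meyers threshold for $p=4$, is imposed; the right-hand side is then handled only through $L^p$ norms of $X_k$, which do converge.
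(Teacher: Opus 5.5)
Your proposal is correct and follows the paper's proof. You use the same Meyers interior estimate for $f_k=u_k-u_0$ with right-hand side $\div((A_k-A_0)\grad u_0)$, the $L^2$ convergence of $f_k$ (previous proposition plus Poincar\'e), and the same convergence-in-measure splitting to get $\|(A_k-A_0)\grad u_0\|_{L^p(\Omega)}\to 0$. The only difference is that you apply Meyers once at $p=4$ and reach $2\le p\le 4$ by H\"older on $\Omega'$, while the paper applies Meyers at each $p$; your route avoids needing the estimate for $p<3$, where the paper's stated version of Meyers does not literally apply.
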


\begin{proof}
    Again let $f_k = u_k - u_0$ and recall that 
    \[
    \div(A_k(x) f_k(x)) = \div((A_0(x) - A_k(x))\grad u_0).
    \]
    We have assumed $\eps_0$ is small enough to ensure that Meyers theorem applies with this choice of $p$. Therefore we have an estimate 
    \[
    \|\grad f_k\|_{L^p(\Omega')} \le C\big(\|(A_0(x) - A_k(x))\grad u_0\|_{L^p(\Omega)} + \|f_k\|_{L^2(\Omega)}\big). 
    \]
    We already know that $\|f_k\|_{L^2(\Omega)} \to 0$. 
Therefore it suffices to show that $\|(A_0(x) - A_k(x)) \grad u_0\|_{L^p(\Omega)} \to 0$. This follows by essentially the same argument as the $L^2$ case using that $g_k \to g$ in measure. 
\end{proof}

It now follows exactly as in Section \ref{Section:PDE-Estimate} that $\|u_k - u_0\|_{L^\infty(\Omega')} \to 0$ as $k\to \infty$. We can then conclude the following non-quantitative variant of Proposition \ref{Proposition:IntegralBound}.

\begin{prop} \label{Proposition:IntegralBound1} Assume that $u_0$ satisfies 
\[
0 < C_1 \le \vert \grad u_0(x)\vert \le C_2
\]
in $\Omega$ and that level sets of $u_0$ satisfy $\area_{g_0}(\{u_0 = t\}) \le C_3$. 
Fix numbers $c < d$.  Define 
\begin{gather*}
    U_k = \{c\le u_k \le d\},\\
    U_0 = \{c\le u_0\le d\},
\end{gather*}
and assume $U_k,U_0 \subset \Omega'$ with $d(U_k,\bd \Omega'), d(U_0,\bd \Omega')\ge \frac 1 {50}$. 
Let $\Psi(y)$ be an $L$-Lipschitz function of $y\in [c-1,d+1]$. Also let $f(x)$ be a bounded continuous function on $\Omega$.  Fix $1\le q\le 3$. Then  
\[
\int_{U_k} \Psi(u_k) f(x)\vert \grad^{g_k} u_k\vert_{g_k}^q \, dv_{g_k} \to  \int_{U_0} \Psi(u_0)f(x) \vert \grad^{g_0} u_0\vert_{g_0}^q\, dv_{g_0}
\]
as $k\to \infty$. 
\end{prop}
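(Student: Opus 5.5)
The plan is to follow the proof of Proposition \ref{Proposition:IntegralBound}, replacing each quantitative bound $C\eps$ by a quantity tending to zero. The available inputs are $\|\grad u_k-\grad u_0\|_{L^p(\Omega')}\to 0$ for $2\le p\le 4$, $\|u_k-u_0\|_{L^\infty(\Omega')}\to 0$, and $g_k\to g_0$ in measure with uniform bi-Lipschitz bounds. Set $\delta_k=\|u_k-u_0\|_{L^\infty(\Omega')}$.

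\textbf{Step 1: reduce to a common domain.} As before, introduce
\[
U_0'=\{c-\delta_k\le u_0\le d+\delta_k\},\qquad U_0''=\{c+\delta_k\le u_0\le d-\delta_k\},
\]
so that $U_0''\subset U_k,U_0\subset U_0'$. The co-area formula, the bound $|\grad u_0|\ge C_1$, and the area bound $C_3$ give $\vol(U_0'-U_0'')\le C\delta_k\to 0$. Both integrands are bounded in $L^{4/3}$, uniformly in $k$:
\begin{itemize}
\item the $g_k$-integrand because $|\grad^{g_k}u_k|^q\le C|\grad u_k|^q$ with $q\le 3$, $\grad u_k$ bounded in $L^4(\Omega')$, and $\Psi,f$ bounded;
\item the $g_0$-integrand because $|\grad u_0|\le C_2$.
\end{itemize}
By H\"older, the integrals over the symmetric difference $U_k\Delta U_0\subset U_0'-U_0''$ therefore tend to zero. (This is the one point where the argument differs: we use the $L^4$ bound with H\"older instead of an $L^\infty$ bound on $\grad u_k$, which we do not have.) It then suffices to show
\[
\int_{U_k}\Big(\Psi(u_k)f\,|\grad^{g_k}u_k|^q_{g_k}\sqrt{|g_k|}-\Psi(u_0)f\,|\grad^{g_0}u_0|^q_{g_0}\sqrt{|g_0|}\Big)\,dx\to 0.
\]

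\textbf{Step 2: split the difference.} Write it as $II+III$, exactly as in the earlier proof.

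For $III$, the integrand is bounded by
\[
C\big(|\Psi(u_k)-\Psi(u_0)|+|\sqrt{|g_k|}-\sqrt{|g_0|}|\big)|\grad u_0|^q.
\]
The first part is at most $CL\delta_k$. The second part is uniformly bounded by the bi-Lipschitz assumption, so its integral is at most $C\eta+C|\{|g_k-g_0|\ge\eta\}|$, which tends to zero after letting $k\to\infty$ and then $\eta\to 0$.

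For $II$, apply the pointwise lemma comparing $|\grad^g u|^q_g$ with $|\grad^{g_0}u_0|^q_{g_0}$. Its proof only used that $G,G_0$ and their convex combinations are uniformly elliptic, so it holds with $\|g-g_0\|_{C^0}$ replaced by the pointwise quantity $|g_k(x)-g_0(x)|$. This gives the bound
\[
C(|\grad u_k|+|\grad u_0|)^{q-1}|\grad u_k-\grad u_0|+C|g_k-g_0|\,|\grad u_0|^q.
\]
The first term is controlled by Cauchy--Schwarz, using $\|\grad u_k-\grad u_0\|_{L^2}\to0$ and the uniform bound on $\|\grad u_k\|_{L^{2q-2}}$; note $2q-2\le 4$, so this follows from the $L^4$ convergence. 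The second term is handled by the same measure-splitting argument as in $III$, since $|g_k-g_0|$ is uniformly bounded.

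\textbf{Main obstacle.} The only real subtlety is the boundary-layer step. Without $C^0$ control of the metrics, the integrand $|\grad u_k|^q$ is not known to be bounded, so the estimate must come from the uniform $L^4$ gradient bound supplied by Meyers' theorem with $p=4$, combined with the vanishing volume of the layer. Everything else is a direct transcription of Proposition \ref{Proposition:IntegralBound}.
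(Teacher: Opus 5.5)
Your proposal is correct and is essentially the paper's argument: the paper states this proposition as a non-quantitative variant of Proposition \ref{Proposition:IntegralBound} without writing out a proof, and your transcription fills it in correctly. You replace $\|g-g_0\|_{C^0}$ by the pointwise $|g_k-g_0|$, split on $\{|g_k-g_0|\ge\eta\}$ (as the paper does in its $L^2$ gradient estimate for $u_k$), and use Cauchy--Schwarz with the uniform $L^4$ bound. One small correction to your ``main obstacle'': after reducing to $U_k$, only the $g_0$-integrand is integrated over $U_k\Delta U_0$, so $|\nabla u_0|\le C_2$ already handles the boundary layer, and the $L^4$ bound on $\nabla u_k$ is needed only in the Cauchy--Schwarz step for $II$.
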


Equipped with this result, we now conclude that $D_k \to D_0$ as $k\to \infty$. In particular, we know that $D_0 = O(a^4)$, and it follows that $D_k \ge -Ca^4$ for sufficiently large $k$. This implies the existence of a point $t_k \in [a,4a]$ with $\widetilde F_k(t_k) \ge - Ca^5$ for all sufficiently large $k$.  Next, since $g_k$ has non-negative scalar curvature and $g_0$ has strictly negative scalar curvature, we observe that 
\[
\widetilde F_k'(t) \ge \frac P 2  \int_{\{u_k = \frac 1 t \}} 1\, da \ge 0. 
\]
for a constant $P > 0$ that does not depend on $k$. In particular, we have $\widetilde F_k(4a)\ge -Ca^5$ for all large enough $k$.  

Then we repeat the argument at scale $8a$ and define 
\begin{gather*}
    E_0^1(s) = \int_{8a+s}^{16a+2s} \frac{\widetilde F_0(t)}{t^3}\, dt, \quad D_0^1 =\int_0^{8a} E_0^1(s)\, ds\\
    E_k^1(s)=\int_{8a+s}^{16a+2s}\frac{\widetilde F_k(t)}{t^3}\, dt,\quad D_k^1 =\int_0^{8a} E(s)\, ds. 
\end{gather*}
Again we have $D_0^1 = O(a^4)$ and the PDE estimates imply that $D_k^1 \to D^1_0$ as $k\to \infty$. On the other hand, for any $t\in [8a,32a]$, we have 
\begin{align*}
    \widetilde F_k(t) &\ge \widetilde F_k(4a) + \int_{4a}^{8a} \widetilde F_k'(s)\, ds \\
    &\ge -Ca^5 + \frac{P}{2} \int_{\{\frac{1}{8a}\le u_k\le \frac{1}{4a}\}} \frac{\vert \grad u_k\vert}{u_k^2}\,dv
\end{align*}
for sufficiently large $k$. For $a$ sufficiently small, the PDE estimates imply that 
\[
\int_{\{\frac{1}{8a}\le u_k\le \frac{1}{4a}\}} \frac{\vert \grad u_k\vert^2}{u_k}\,dv \to \int_{\{\frac{1}{8a}\le u_0\le \frac{1}{4a}\}} \frac{\vert \grad u_0\vert}{u_0^2}\,dv \ge Ca^3
\]
as $k\to \infty$. Selecting $a$ so that this estimate is true, we obtain 
\[
\widetilde F_k(t) \ge C  a^3
\]
for all $t\in [8a,32a]$. This implies that 
\[
D_k^1 \ge Ca^2 
\]
for all sufficiently large $k$. This contradicts that $D^1_0 = O(a^4)$ and that $D^1_k \to D^1_0$ provided $a$ is selected appropriately. 

\appendix 

\section{Monotonicity Formulas} \label{Appendix:F}

In this appendix, we recall a monotonicity formula holding along the level sets of a harmonic function discovered by Agostiniani, Mazzieri, and Oronzio \cite{agostiniani2024green}.
Then we give a generalization adapted to metrics with rotational symmetry. 

\subsection{The \texorpdfstring{$F$}{F} Functional}
Let $u$ be a positive harmonic function on a domain $\Omega$ in $\R^3$ equipped with a $C^2$ Riemannian metric.  Assume that each regular level set of $u$ is closed and connected. 

\begin{defn}
For each regular value $t$ of $u$, define 
\[
F(t) = 4\pi t   - t^2 \int_{\{u = \frac 1 t\}} H {\vert \grad u\vert}\, da + t^3 \int_{\{u=\frac 1 t\} } {\vert \grad u\vert^2}\, da. 
\]
Note that our function $u$ corresponds to the function $1-u$ in \cite{agostiniani2024green} and our convention is that 
\[
H = -\div\left(\frac{\grad u}{\vert \grad u\vert}\right)
\]
so that $H > 0$ when $u = \vert x\vert^{-1}$ on $\R^3$. We hope that this will not cause confusion. 
\end{defn}

Agostiniani, Mazzieri, and Oronzio \cite{agostiniani2024green} proved the following monotonicity theorem. While their paper is written assuming $g$ is smooth, it is straightforward to verify that the same arguments still work when $g$ is $C^2$.

\begin{theorem}
\label{Theorem:Derivative}
The function $F$ admits a locally absolutely continuous representative  which agrees with the above formula when $t$ is a regular value.  Moreover, for every regular value $t$ we have 
\[
F'(t) = 4\pi + \int_{\{u=\frac 1 t\}} -\frac{R^{\Sigma_t}}{2} + \frac{\vert \grad^{\Sigma_t}\vert \grad u\vert\vert^2}{\vert \grad u\vert^2} + \frac{R}{2} + \frac{\vert \mathring A\vert^2}{2} + \frac 3 4 \left(\frac{2\vert \grad u\vert}{u} - H\right)^2\, da,
\]
where we have abbreviated $\Sigma_t = \{u = \frac 1 t\}$. 
\end{theorem}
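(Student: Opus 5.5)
The plan is to follow the computation of Agostiniani, Mazzieri and Oronzio, checking that only two derivatives of $g$ are used. First I would handle regular values, where everything is smooth. Near a regular level set $\Sigma_t=\{u=\frac 1t\}$ we have $\vert\grad u\vert>0$, the unit normal is $\nu=\grad u/\vert\grad u\vert$, and by the coarea formula moving $t$ corresponds to flowing along $\grad u/\vert \grad u\vert^2$. There are then two boundary quantities to differentiate.
\begin{itemize}
\item The first is $\int_{\{u=s\}}\vert\grad u\vert^2\,da$. By the divergence theorem it equals a flux of $\vert\grad u\vert\,\grad u$, so its derivative is the identity already recorded before the statement.
\item The second is $\int_{\{u=s\}}H\vert\grad u\vert\,da$. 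Since $\lap u=0$, the sign convention for $H$ gives $H\vert\grad u\vert=\grad^2u(\nu,\nu)=\la\grad\vert\grad u\vert,\nu\ra$. Hence this integral is the flux of $\grad\vert\grad u\vert$, and its $s$-derivative is $\int_{\{u=s\}}\lap\vert\grad u\vert/\vert\grad u\vert\,da$.
\end{itemize}

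Next I would evaluate $\lap\vert\grad u\vert$ by the Bochner formula:
\[
\vert\grad u\vert\,\lap\vert\grad u\vert=\vert\grad^2u\vert^2-\vert\grad\vert\grad u\vert\vert^2+\operatorname{Ric}(\grad u,\grad u).
\]
I would then decompose the right-hand side using the geometry of the level set. The Hessian splits as $\grad^2u=\vert\grad u\vert A$ tangentially, with mixed entries $\grad^{\Sigma}\vert\grad u\vert$ and normal entry $-\vert\grad u\vert H$. This gives
\[
\vert\grad^2u\vert^2-\vert\grad\vert\grad u\vert\vert^2=\vert\grad u\vert^2\vert A\vert^2+\vert\grad^\Sigma\vert\grad u\vert\vert^2.
\]
The traced Gauss equation gives $\operatorname{Ric}(\nu,\nu)=\frac12(R-R^\Sigma+H^2-\vert A\vert^2)$. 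Finally I would write $\vert A\vert^2=\vert\mathring A\vert^2+\frac{H^2}{2}$.

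With these in hand, I would compute $F'(t)$ by the product rule with the weights $t^2$ and $t^3$, using $u=\frac1t$ on $\Sigma_t$ and $\frac{ds}{dt}=-t^{-2}$. I expect the terms in $H^2$, $H\vert\grad u\vert/u$ and $\vert\grad u\vert^2/u^2$ to assemble exactly into $\frac34\left(\frac{2\vert\grad u\vert}{u}-H\right)^2$. The $4\pi$ term comes from differentiating $4\pi t$, and $R^{\Sigma_t}$ is left unintegrated, as in the statement. This algebra is routine but must be checked carefully against the sign convention $H=-\div(\grad u/\vert\grad u\vert)$. Every step uses only $g\in C^2$: Ricci and Gauss need second derivatives of $g$, and interior Schauder theory gives $u\in C^{2,\alpha}$, with $\grad^2 u$ at least $W^{1,p}$, which is enough to justify the Bochner identity in the weak sense.

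The main obstacle is the critical values, that is, proving that $F$ has a locally absolutely continuous representative. Here I would follow AMO and use the regularization $\vert\grad u\vert_\delta=\sqrt{\vert\grad u\vert^2+\delta}$. The key steps are:
\begin{enumerate}
\item Using the divergence theorem, express $F(t_2)-F(t_1)$, for regular values $t_1<t_2$, as a bulk integral over $\{\frac1{t_2}\le u\le \frac1{t_1}\}$ of the regularized Bochner quantities.
\item Show that these integrands are bounded below, or are in $L^1$ uniformly in $\delta$, using the refined Kato inequality $\vert\grad^2u\vert^2\ge\frac32\vert\grad\vert\grad u\vert\vert^2$ for harmonic functions in dimension three.
\item Pass $\delta\to0$ to obtain an integral formula valid for all regular values.
\item Since the critical values have measure zero (by Sard-type results for harmonic functions) and the integrand lies in $L^1_{\mathrm{loc}}$, conclude that $F$ agrees on regular values with an absolutely continuous function whose derivative is the stated integrand.
\end{enumerate}
I would check that none of these steps needs more than two derivatives of the metric. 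Connectedness of the level sets is not needed for the derivative formula itself; it matters only later, when Gauss–Bonnet is applied to the $R^{\Sigma_t}$ term.
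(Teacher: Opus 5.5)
Your proposal is correct and takes essentially the paper's route. The paper gives no independent argument: it cites Agostiniani--Mazzieri--Oronzio and asserts their proof goes through for $C^2$ metrics, and your plan reconstructs that proof (the Bochner and traced Gauss computation at regular values, then a refined-Kato regularization to pass through critical values) while checking the $C^2$ bookkeeping. One harmless slip: with $H=-\div(\grad u/\vert\grad u\vert)$ the normal Hessian entry is $\grad^2u(\nu,\nu)=+\vert\grad u\vert H$, as your first bullet correctly says, not $-\vert\grad u\vert H$; since only $\vert\grad^2u\vert^2$ enters, nothing changes.
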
 

In particular, since we have assumed $\Sigma_t$ is connected, we have the following immediate corollary from the Gauss-Bonnet theorem. 

\begin{corollary}
\label{Corollary:Derivative-Formula}
We have 
\begin{align*}
F'(t) &\ge \int_{\{u=\frac 1 t\}}  \frac{R}{2} + \frac{\vert \grad^{\Sigma_t}\vert \grad u\vert\vert^2}{\vert \grad u\vert^2}  + \frac{\vert \mathring A\vert^2}{2} + \frac 3 4 \left(\frac{2\vert \grad u\vert}{u} - H\right)^2\, da
\end{align*} 
for every regular value $t$. Equality holds when $\Sigma_t$ is a sphere. 
\end{corollary}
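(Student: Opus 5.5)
We derive Corollary \ref{Corollary:Derivative-Formula} from Theorem \ref{Theorem:Derivative} by bounding the term $4\pi - \int_{\Sigma_t} \frac{R^{\Sigma_t}}{2}\,da$ from below with Gauss--Bonnet. Fix a regular value $t$. Then $\Sigma_t = \{u = \frac 1 t\}$ is a closed embedded $C^2$ surface, since $\grad u \neq 0$ there and $u$ is $C^2$ (elliptic regularity for a $C^2$ metric gives $u\in C^{2,\alpha}$). By the standing assumption it is connected. In dimension two the scalar curvature of the induced metric is twice the Gauss curvature, $R^{\Sigma_t} = 2K$. Gauss--Bonnet then gives
\[
\int_{\Sigma_t} \frac{R^{\Sigma_t}}{2}\, da = \int_{\Sigma_t} K\, da = 2\pi \chi(\Sigma_t).
\]

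Next we bound the Euler characteristic. $\Sigma_t$ is a level set of a function on a domain in $\R^3$, so it is two-sided and hence orientable. A closed, connected, orientable surface of genus $\gamma\ge 0$ has $\chi(\Sigma_t) = 2-2\gamma \le 2$, with equality exactly when $\Sigma_t$ is a sphere. Therefore
\[
4\pi - \int_{\Sigma_t}\frac{R^{\Sigma_t}}{2}\,da = 4\pi - 2\pi\chi(\Sigma_t) \ge 0,
\]
with equality if and only if $\Sigma_t \cong S^2$.

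Substituting this into the formula of Theorem \ref{Theorem:Derivative} and dropping the nonnegative quantity $4\pi - 2\pi\chi(\Sigma_t)$ gives the stated inequality for $F'(t)$. When $\Sigma_t$ is a sphere that quantity vanishes, so the inequality is an equality.

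The only point needing care is regularity. Gauss--Bonnet requires $\Sigma_t$ to be at least $C^2$ so that $K$ is defined, which holds at regular values as noted above. The orientability claim, which is what excludes $\chi$ of a non-orientable surface from entering the estimate, follows from $\Sigma_t$ being a regular level set, since $\grad u/\vert\grad u\vert$ is a continuous unit normal. No further obstacle is expected.
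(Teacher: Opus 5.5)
Your argument is correct and is exactly the paper's: Gauss--Bonnet on the closed connected level set gives $\int_{\Sigma_t}\frac{R^{\Sigma_t}}{2}\,da = 2\pi\chi(\Sigma_t)\le 4\pi$. So the extra term $4\pi - \int_{\Sigma_t}\frac{R^{\Sigma_t}}{2}\,da$ in Theorem \ref{Theorem:Derivative} is nonnegative and vanishes when $\Sigma_t$ is a sphere. Your orientability step is harmless but not needed, since every closed connected surface, orientable or not, has $\chi\le 2$.
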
 

\subsection{Rotational Symmetry}
When we study metrics with rotational symmetry, it will be necessary to use a modification of the above $F$ functional.  Such a modification was introduced for the hyperbolic metric in \cite{kroencke2025green}. Assume that 
\[
g_0 = dr^2 + \psi(r)^2 g_{S^2}
\]
is a rotationally symmetric metric on the unit ball in $\R^3$.  Let $u_0$ be the $g_0$-harmonic function on $B_1 - B_{1/100}$ which is equal to $100$ on $\bd B_{1/100}$ and equal to $0$ on $\bd B_1$. Note that $u_0$ is radial, and therefore we can find functions $b$ and $f$ so that $u_0 = b(r)$ and $H = f(u_0)\vert \grad u_0\vert$.  Moreover, we have $b'(t) < 0$ for $t\in (\frac 1 {100},1)$. 

Now suppose $u$ is a harmonic function on a domain $\Omega$ in $\R^3$ equipped with a Riemannian metric.  Suppose that each regular level set of $u$ is closed and connected. We define 
\[
\overline F(t) = 4\pi t - c_1(t) \int_{\{u=b(t)\}} H\vert \grad u\vert\, da + c_2(t)\int_{\{u=b(t)\}}\vert \grad u\vert^2\, da
\]
for some functions $c_1$ and $c_2$ to be specified later. 
If $b(t)$ is a regular value of $u$, then by the chain rule and arguing as in \cite{kroencke2025green}, one has 
\begin{gather*}
    \frac{d}{dt} \int_{\{u=b(t)\}} \vert \grad u\vert^2 \, da = b'(t) \int_{\{u=b(t)\}} H\vert \grad u\vert\, da,\\
     \frac{d}{dt} \int_{\{u=b(t)\}} H\vert \grad u\vert\, da = b'(t)\int_{\{u=b(t)\}} \left[\frac{R}{2}-\frac{R_{\Sigma_t}}{2} + \frac{\vert \grad^{\Sigma_t}\vert \grad u\vert \vert^2}{\vert \grad u\vert^2} + \frac{\vert \mathring A\vert^2}{2} + \frac 3 4 H^2 \right]\, da.
\end{gather*}
Let us abbreviate 
\[
A(t) = \int_{\{u=b(t)\}} \vert \grad u\vert^2\, da \quad\text{and}\quad B(t) = \int_{\{u=b(t)\}} H\vert \grad u\vert\, da. 
 \]
Then at a regular value we have 
\begin{align*}
    \overline F'(t) &= 4\pi - c_1'(t)B(t) - c_1(t)B'(t) + c_2'(t) A(t) + c_2(t)A'(t) \phantom{\int}\\
    &= 4\pi + \big (c_2(t)b'(t) - c_1'(t)\big )\int_{\{u=b(t)\}} H\vert \grad u\vert\, da + c_2'(t) \int_{\{u=b(t)\}} \vert \grad u\vert^2\, da \\
    &\qquad - c_1(t) b'(t) \int_{\{u=b(t)\}} \left[\frac{R}{2}-\frac{R_{\Sigma_t}}{2} + \frac{\vert \grad^{\Sigma_t}\vert \grad u\vert \vert^2}{\vert \grad u\vert^2} + \frac{\vert \mathring A\vert^2}{2} + \frac 3 4 H^2 \right]\, da.
\end{align*}
We select $c_1(t) = - \frac{1}{b'(t)}$ to facilitate a cancellation via the Gauss-Bonnet theorem. Then we complete the square to get 
\begin{align*}
    \overline F'(t) &= 4\pi + \int_{\{u=b(t)\}} \left[\frac{R}{2}-\frac{R_{\Sigma_t}}{2} + \frac{\vert \grad^{\Sigma_t}\vert \grad u\vert \vert^2}{\vert \grad u\vert^2} + \frac{\vert \mathring A\vert^2}{2} + \frac 3 4 \left(H - f(u)\vert \grad u\vert\right)^2 \right]\, da\\
    &\quad + \big(c_1'(t) - c_2(t)b'(t) + \frac{3}{2} f(b(t))\big) \int_{\{u=b(t)\}} H\vert \grad u\vert\, da + \big(c_2'(t) - \frac 3 4 f(b(t))^2\big) \int_{\{u=b(t)\}} \vert \grad u\vert^2\, da. 
\end{align*}
Now we select 
\[
c_2(t) = \frac{c_1'(t) + \frac 3 2 f(b(t))}{b'(t)}. 
\]
Then at a regular value we have 
\begin{align*}
\overline F'(t) &= 4\pi + \int_{\{u=b(t)\}} \left[\frac{R}{2}-\frac{R_{\Sigma_t}}{2} + \frac{\vert \grad^{\Sigma_t}\vert \grad u\vert \vert^2}{\vert \grad u\vert^2} + \frac{\vert \mathring A\vert^2}{2} + \frac 3 4 \left(H - f(u)\vert \grad u\vert\right)^2 \right]\, da\\
&\qquad + c_3(t) \int_{\{u=b(t)\}} \vert \grad u\vert^2\, da
\end{align*}
where $c_3(t) = c_2'(t) - \frac 3 4 f(b(t))^2$. The arguments of \cite{agostiniani2024green} and \cite{kroencke2025green} show that in fact the function $\overline F$ is locally absolutely continuous with derivative given by the above formula almost everywhere.  

The next proposition follows immediately from the preceding discussion and the Gauss-Bonnet theorem. 

\begin{prop}
\label{Proposition:Modified-F-Derivative}
    Recall we are assuming that $u$ is harmonic with respect to $g$ and that the level sets of $u$ are closed and connected.  In this case we have 
    \[
    \overline F'(t) \ge \int_{\{u=b(t)\}} \frac{R}{2}\, da + c_3(t) \int_{\{u=b(t)\}} \vert \grad u\vert^2\, da. 
    \]
    If $u = u_0$ and $g = g_0$ where $u_0$ and $g_0$ are the model function and metric used to define $\overline F$, then the above formula becomes an equality. 
\end{prop}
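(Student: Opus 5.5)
The inequality should follow by combining the derivative formula for $\overline F$ computed just above the statement with Gauss--Bonnet on each level set. Fix a regular value $b(t)$ of $u$ and write $\Sigma_t=\{u=b(t)\}$. With the choices $c_1=-1/b'$, $c_2=(c_1'+\tfrac32 f(b))/b'$ and $c_3=c_2'-\tfrac34 f(b)^2$, the preceding computation gives
\[
\overline F'(t) = 4\pi - \int_{\Sigma_t}\frac{R_{\Sigma_t}}{2}\,da + \int_{\Sigma_t}\left[\frac R2 + \frac{\vert \grad^{\Sigma_t}\vert\grad u\vert\vert^2}{\vert\grad u\vert^2} + \frac{\vert\mathring A\vert^2}{2} + \frac34\big(H-f(u)\vert\grad u\vert\big)^2\right]da + c_3(t)\int_{\Sigma_t}\vert\grad u\vert^2\,da .
\]
Since $\Sigma_t$ is closed and connected, Gauss--Bonnet gives $\int_{\Sigma_t} R_{\Sigma_t}\,da = 4\pi\chi(\Sigma_t)\le 8\pi$, with equality exactly when $\Sigma_t$ is a sphere. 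Hence $4\pi-\frac12\int_{\Sigma_t}R_{\Sigma_t}\,da\ge 0$. The three remaining bracketed terms are nonnegative squares, so discarding them yields the inequality at every regular value. Because $\overline F$ is locally absolutely continuous with this derivative almost everywhere (by the arguments of \cite{agostiniani2024green,kroencke2025green}), the inequality holds as a statement about the a.e.\ derivative.

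For the equality case, take $u=u_0$ and $g=g_0$. Then $u_0=b(r)$ is radial, so every level set is a coordinate sphere $\{r=\text{const}\}$, which is a topological sphere, and the Gauss--Bonnet term vanishes. By rotational symmetry $\vert\grad u_0\vert$ is constant on each level set, so $\grad^{\Sigma_t}\vert\grad u_0\vert=0$. The coordinate spheres of a warped product are umbilic, so $\mathring A=0$. Finally, $H=f(u_0)\vert\grad u_0\vert$ holds by the definition of $f$, so the completed square vanishes. All discarded terms are therefore zero and the inequality is an equality.

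The only step needing any care is confirming that the squares were completed correctly when $c_2$ was chosen. Expanding $\tfrac34(H-f\vert\grad u\vert)^2$ produces $\tfrac34H^2 - \tfrac32 fH\vert\grad u\vert + \tfrac34 f^2\vert\grad u\vert^2$. The cross term is cancelled by the coefficient $c_1'-c_2b'+\tfrac32 f=0$, and the last term is absorbed into $c_3$. This is a direct check against the displayed computation. Regularity is already provided by the cited absolute continuity, and for $u_0$ every value in the range is regular because $b'<0$.
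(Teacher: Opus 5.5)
Your overall argument is the same as the paper's, which just says the proposition follows from the preceding computation plus Gauss--Bonnet: $4\pi-\frac12\int_{\Sigma_t}R_{\Sigma_t}\,da=2\pi(2-\chi(\Sigma_t))\ge 0$, then discard the three squares and check that they all vanish for the radial model. Those parts are fine.

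The gap is the step you call ``a direct check'': the cancellation you assert does not hold for the $c_2$ you use. Differentiating $\overline F=4\pi t-c_1B+c_2A$ with $A'=b'B$ puts the coefficient $c_2b'-c_1'$ in front of $B(t)=\int_{\Sigma_t}H\vert\grad u\vert\,da$. This is the paper's first display. Writing $\frac34H^2=\frac34(H-f\vert\grad u\vert)^2+\frac32fH\vert\grad u\vert-\frac34f^2\vert\grad u\vert^2$ turns this coefficient into $c_2b'-c_1'+\frac32f(b(t))$, not $c_1'-c_2b'+\frac32f(b(t))$. The paper's second display contains this same sign slip, and you reproduced it rather than checking it.

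With $c_2=(c_1'+\frac32f(b))/b'$ the coefficient equals $3f(b(t))$. So your formula for $\overline F'$ omits the term $3f(b(t))\int_{\Sigma_t}H\vert\grad u\vert\,da$, which has no sign for a general harmonic $u$. The claimed equality for the model is also false. Take the flat metric, where $u_0=k(r^{-1}-1)$ with $k=\frac{100}{99}$. Then $c_1=t^2/k$, $f(b(t))=2t/k$ and $c_2=-5t^3/k^2$. This gives $\overline F(t)=-24\pi t$, so $\overline F'(t)=-24\pi$. But the right-hand side is $c_3(t)\int_{\Sigma_t}\vert\grad u_0\vert^2\,da=-72\pi$; the difference $48\pi$ is exactly $3f(b(t))B(t)$.

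The repair is to take $c_2=(c_1'-\frac32f(b))/b'$ and to define $c_3=c_2'-\frac34f(b)^2$ with this $c_2$. As a sanity check, in the flat case this gives $c_2=t^3/k^2$, $c_3\equiv0$ and $\overline F\equiv 0$, which is, up to normalization, the functional of \cite{agostiniani2024green}. With this choice your displayed expression for $\overline F'(t)$ is correct. The rest of your proof then goes through unchanged: Gauss--Bonnet on connected level sets, discarding the nonnegative squares, and the equality discussion for $u_0$, $g_0$ (round level sets, constant $\vert\grad u_0\vert$, umbilicity, and $H=f(u_0)\vert\grad u_0\vert$).
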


\section{Green's Functions} 
\label{Appendix:Green}

In this appendix, we derive some asymptotic estimates for Green's functions near the pole. We first discuss the case where the metric is only $C^2$, and then we give improved estimates when the metric has more regularity.  All of these results are well-known. We present them for the reader's convenience. 

\begin{theorem}
\label{Theorem:Green}
Let $B$ be the unit ball in $\R^3$.  Let $g$ be a $C^2$ metric on $B$ which satisfies $g_{ij}(0) = \delta_{ij}$ and $\bd_k g_{ij}(0) = 0$. 
Then there is a unique solution $\Gamma$ to 
\[
\begin{cases}
\lap_g \Gamma = -4\pi \delta_0, &\text{in B}\\
\Gamma = 0, &\text{on } \bd B. 
\end{cases}
\]
Write $\Gamma(x) = \frac 1 {\vert x\vert} + e(x)$.  Then for any $\tau > 0$ there is a constant $C = C(\tau)$ such that the function $e$ satisfies the estimates 
\begin{gather*}
\vert e(x) - e(0)\vert \le C \vert x\vert^{1-\tau},\\
\vert \grad e(x)\vert \le C\vert x\vert^{-\tau}, \\
\vert \del^2 e(x)\vert \le C \vert x\vert^{-1-\tau}
\end{gather*} 
as $\vert x\vert \to 0$. 
\end{theorem}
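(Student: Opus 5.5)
Existence and uniqueness come from the standard parametrix construction. Fix a cutoff $\chi$ with $\chi\equiv 1$ near $0$, supported in $B_{1/2}$, and write $\Gamma = \chi\,|x|^{-1} + w$. Then $w$ must solve
\[
\lap_g w = -4\pi\delta_0 - \lap_g(\chi |x|^{-1}) =: h,
\]
with $w=0$ on $\bd B$. I would first compute the singular part of $h$. Write
\[
\lap_g = g^{ij}\bd_i\bd_j - g^{ij}\Gamma^k_{ij}\bd_k.
\]
Near $0$ the hypotheses $g_{ij}(0)=\delta_{ij}$ and $\bd_k g_{ij}(0)=0$, together with $g\in C^2$, give $g^{ij}-\delta^{ij}=O(|x|^2)$ and $\Gamma^k_{ij}=O(|x|)$. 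Since $\lap_{\text{euc}}|x|^{-1} = -4\pi\delta_0$, we get
\[
h = (g^{ij}-\delta^{ij})\bd_i\bd_j|x|^{-1} - g^{ij}\Gamma^k_{ij}\bd_k|x|^{-1} + (\text{smooth}) = O(|x|^{-1}).
\]
The determinant factor $\sqrt{|g|}$ only contributes lower order terms of the same type. Thus $h\in L^p$ for every $p<3$. Uniqueness follows from the maximum principle applied to the difference of two solutions, which is a bounded $g$-harmonic function (a removable singularity) vanishing on $\bd B$.

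Next comes regularity of $w$. The coefficients of $\lap_g$ are $C^1$ (in fact $g^{ij}\in C^2$ and $\Gamma\in C^1$). Since $h\in L^p$ for all $p<3$, $L^p$ theory (Calderón–Zygmund) gives $w\in W^{2,p}$ for all $p<3$. By Morrey embedding, $\grad w\in C^{0,1-3/p}$, so $|\grad w(x)-\grad w(0)|\le C|x|^{1-\tau}$, and $e = w$ near $0$ satisfies $|e(x)-e(0)|\le C|x|$, which is stronger than needed. This also gives $|\grad e|\le C$, again better than claimed.

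The main obstacle is the pointwise second-derivative bound $|\del^2 e|\le C|x|^{-1-\tau}$, since the $W^{2,p}$ bound is only integral. I would get it by a dyadic rescaling argument. Fix $r$, set $A_r=B_{2r}-B_{r/2}$, and define
\[
w_r(y) = w(ry)-w(0)-r\grad w(0)\cdot y
\]
on $A_1$ and a slightly larger annulus. Then $w_r$ solves a rescaled equation with coefficients uniformly $C^1$. Its right-hand side is $r^2 h(ry)$ plus the operator applied to the affine part, and this is bounded by $Cr$ in $C^{0,\alpha}$ on the annulus. Indeed, $h$ is built from $C^1$ coefficient data times homogeneous functions of degree $-1$ and $-2$, so after scaling it has $C^{0,\alpha}$ norm $O(r)$; the affine correction contributes terms $r\,O(\Gamma(ry))$, also fine.

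Interior Schauder estimates then give
\[
\|\del^2 w_r\|_{L^\infty(A_1)} \le C\big(r + \|w_r\|_{L^\infty}\big) \le C\big(r + r^{2-\tau}\big) \le Cr,
\]
using the $C^{1,1-\tau}$ bound for $\|w_r\|_{L^\infty}$. Scaling back, $|\del^2 w|\le C r\cdot r^{-2} = Cr^{-1}$ on $A_r$. This is the claimed bound, even without the $\tau$ loss.

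If the Hölder control of the coefficient data proves awkward because $g$ is only $C^2$, I would instead use $W^{2,p}$ estimates with large $p$ on the annulus followed by Morrey embedding. That route loses the $\tau$ and gives exactly the stated $|x|^{-1-\tau}$ bound.
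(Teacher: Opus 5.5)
There is a genuine gap at the embedding step. Your overall plan is the paper's: show the right-hand side for $e$ is $O(|x|^{-1})$, get $e\in W^{2,p}$ for all $p<3$, then rescale on dyadic annuli and apply interior Schauder estimates. But everything you describe as better than claimed rests on that step.

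Morrey's embedding $W^{1,p}\subset C^{0,1-3/p}$ in $\R^3$ requires $p>3$. You apply it to $\grad w\in W^{1,p}$ with $p<3$, where $1-3/p<0$, and you cannot get $p>3$ because $|x|^{-1}\notin L^3$. What $W^{2,p}$ for all $p<3$ actually gives is $\grad w\in L^q$ for every $q<\infty$. Hence you get only $w\in C^{0,1-\tau}$, i.e.\ $|e(x)-e(0)|\le C|x|^{1-\tau}$. The following are therefore unsupported:
the Lipschitz bound $|e(x)-e(0)|\le C|x|$;
the bound $|\grad e|\le C$;
the existence of $\grad w(0)$ in your affine normalization of $w_r$;
the estimate $\|w_r\|_{L^\infty}\le Cr^{2-\tau}$, and with it your $\tau$-free Hessian bound.

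The first two are in fact false for general $C^2$ metrics. The right-hand side is an even term $F(\theta)|x|^{-1}$ plus a remainder that is only $o(|x|^{-1})$, at a rate set by the modulus of continuity of $\del^2 g$. A degree-one spherical-harmonic part of that remainder resonates with the linear functions. For example, take $g=e^{2\phi}\delta$ with $\phi(x)=x_1|x|/\log(1/|x|)$ near $0$. This is $C^2$ with $g(0)=\delta$ and $\bd g(0)=0$, and one finds $e(x)-e(0)=-\tfrac23\,x_1\log\log(1/|x|)+O(|x|)$, so $\grad e$ is unbounded. This is why the statement carries the loss $\tau$. It is also why the paper gets Lipschitz bounds only for $C^3$ metrics: there the remainder is $O(1)$ and the evenness of $F$ rules out the resonance.

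The repair is easy and is exactly the paper's argument. Drop the affine correction and use $w_r(y)=w(ry)-w(0)$; the paper's $v(x)=r(e(rx)-e(0))$ differs only by a factor $r$. The $C^{0,1-\tau}$ bound gives $\|w_r\|_{L^\infty}\le Cr^{1-\tau}$ on a fixed annulus. Your observation that the rescaled right-hand side is $O(r)$ in $C^{0,\alpha}$ is correct; it uses $|\grad h|=O(|x|^{-2})$, i.e.\ that $g$ is $C^2$, which the paper also checks for $D_kf$. Interior Schauder estimates then give $|\grad w_r|+|\del^2 w_r|\le C(r+r^{1-\tau})$ on the unit sphere. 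Scaling back yields $|\grad e(x)|\le C|x|^{-\tau}$ and $|\del^2 e(x)|\le C|x|^{-1-\tau}$. Note that the gradient bound must also come from this rescaling, not from the embedding.

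Your fallback misidentifies where the loss comes from. $W^{2,p}$ estimates plus Morrey control $\grad w$ pointwise, not $\del^2 w$. The $\tau$ enters through the $C^{0,1-\tau}$ size bound fed into the rescaled estimate, not through choosing Schauder versus $L^p$ theory.

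Your parametrix construction of $\Gamma$, solving for $w$ by $L^p$ theory, is a fine and more self-contained substitute for the paper's appeal to Gr\"uter--Widman.
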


\begin{proof} Consider the operator 
\[
L u = D_i(a^{ij} D_j u)
\]
where $a^{ij} = g^{ij} \sqrt{\vert g\vert}$.  Then 
\[
\lap_g u = \frac{1}{\sqrt{\vert g\vert}} L u. 
\]
Since $g_{ij}(0) = \delta_{ij}$, it follows that the PDE in the statement of the theorem is equivalent to 
\[
\begin{cases}
L \Gamma = -4\pi \delta_0, &\text{ in } B\\
\Gamma = 0, & \text{on } \bd B. 
\end{cases}
\]
Note that $a^{ij}$ is $C^2$ with $a^{ij}(0) = 0$ and $\bd_k a^{ij}(0) = 0$.     In the rest of the proof, we will work with the operator $L$.  
It is well-known that there is a Green's function $\Gamma$ for $L$ which solves the above PDE in the sense that 
\[
\int_B a^{ij} D_i \Gamma D_j \phi  = 4\pi \phi(0)
\]
for all $\phi \in C^\infty_c(B)$; see \cite[Theorem 1.1]{gruter1982green}.

Now we follow the arguments in \cite[Appendix B]{li1997sharp}. Observe that for any $\phi \in C^\infty_c(B)$, we have 
\begin{align*}
\int_B a^{ij}  D_i \phi D_j(\vert x\vert^{-1}) &= \lim_{r\to 0} \int_{B-B_r} a^{ij} D_i \phi D_j(\vert x\vert^{-1}).
\end{align*}
Let $\nu$ be the inward pointing unit normal along $\bd B_r$. Then 
\begin{align*}
 \int_{B-B_r} a^{ij} D_i\phi D_j(\vert x\vert^{-1}) &= -\int_{B-B_r} \phi D_i(a^{ij} D_j (\vert x\vert^{-1})) + \int_{\bd B_r} \phi a^{ij} D_j (\vert x\vert^{-1}) \nu_i\ \\
&= -\int_{B-B_r} \phi D_i(a^{ij} D_j (\vert x\vert^{-1}))  + \int_{\bd B_r} \phi a^{ij} \left(\frac{x_j}{\vert x\vert^3}\right)\left(\frac{x_i}{\vert x\vert}\right).
\end{align*}
Since $a^{ij}(0) = \delta_{ij}$, we have 
\[
\lim_{r\to 0}  \int_{\bd B_r} \phi a^{ij} \left(\frac{x_j}{\vert x\vert^3}\right)\left(\frac{x_i}{\vert x\vert}\right) = 4\pi \phi(0). 
\]
in $B$.  
Next, we define 
\begin{align*}
f(x) = D_i(a^{ij} D_j (\vert x\vert^{-1})) 
&= -(D_i a^{ij})  \frac{x_j}{\vert x\vert^3}  + a^{ij}  \left[\frac{3 x_i x_j}{\vert x\vert^5} - \frac{\delta_{ij}}{\vert x\vert^3}\right].
\end{align*} 
The first term is $O(\vert x\vert^{-1})$ since $\bd_k a^{ij}(0) = 0$.  For the second term, we have 
\[
a^{ij} \left[\frac{3 x_i x_j}{\vert x\vert^5} - \frac{\delta_{ij}}{\vert x\vert^3}\right] = \left[a^{ij} - \delta^{ij}\right]\left[\frac{3 x_i x_j}{\vert x\vert^5} - \frac{\delta_{ij}}{\vert x\vert^3}\right],
\]
and $a^{ij} - \delta^{ij} = O(\vert x\vert^2)$ by our assumptions.  This implies that the third term is also $O(\vert x\vert^{-1})$.  Therefore, we have $f(x) = O(\vert x\vert^{-1})$ and it follows that 
\[
\lim_{r\to 0} \int_{B-B_r} \phi D_i(a^{ij} D_j (\vert x\vert^{-1})) = \int_{B} \phi D_i(a^{ij} D_j (\vert x\vert^{-1})). 
\]
Combining the above calculations, we see that 
\[
\int_B a^{ij} D_i\phi D_j(\vert x\vert^{-1}) = 4\pi \phi(0) -\int_B \phi D_i(a^{ij} D_j (\vert x\vert^{-1})) . 
\]
Define 
\[
e(x) = \Gamma(x) - \frac{1}{\vert x\vert}.
\]
The above calculations show that 
\begin{align*}
L e = L\left(\Gamma - \frac 1 {\vert x\vert}\right) &= f(x)
\end{align*}
in the sense that 
\[
\int_B a^{ij} D_i\phi D_j w = \int_B \phi D_i(a^{ij} D_j (\vert x\vert^{-1}))
\]
for all $\phi \in C^\infty_c(B)$. 
We have now show that 
\[
Le = O(\vert x\vert^{-1}).
\]  
Since $\vert x\vert^{-1}\in L^{q}(B)$ for any $q < 3$, the $L^p$ estimates for elliptic equations imply that $e \in W^{2,q}(B)$ for every $q < 3$.  By the Sobolev embedding theorem, this implies that $e \in C^{0,1-\tau}$ for every $\tau > 0$.   In particular, the value $e(0)$ exists and is well-defined and there is an estimate $\vert e(x) - e(0)\vert \le C \vert x\vert^{1-\tau}$. 

For the remainder of the argument, we work with a fixed small $\tau > 0$.  We need to prove the estimates on the derivatives of $e$.  To that end, consider the annulus $A = B_3 - B_{1/3}$ and let $A' = B_2 - B_{1/2}$.  Fix a small scale $r > 0$.  Define the function $v(x) = r (e(rx) - e(0))$ on $A$. Then $v$ solves 
\[
D_i(\tilde a^{ij} D_j v) = \tilde f(x). 
\]
where $\tilde a^{ij}(x) = a^{ij}(rx)$ and $\tilde f(x) = r^3 f(rx)$.  Since $\vert f(x)\vert \le C\vert x\vert^{-1}$, it follows that 
\[
\vert  \tilde f(x)\vert \le {C} r^2
\]
where $C$ does not depend on $r$.  Standard interior elliptic estimates now give 
\[
\|v\|_{W^{2,4}(A')} \le C (\|v\|_{L^\infty(A)} + \|\tilde f\|_{L^\infty(A)}) \le C(p) r^{2-\tau}. 
\]
We can then apply the Sobolev embedding theorem to get 
$
\vert \grad v(x)\vert \le C r^{2-\tau} 
$
for all $x$ with $\vert x\vert = 1$. Returning to the original scale and using $\grad v(x) = r^2 \grad e(rx)$, this implies that 
\[
\vert \grad e(x) \vert \le C \vert x\vert^{-\tau}
\]
whenever $\vert x\vert = r$. Since $r$ could be any small number, this proves the gradient estimate. 

Finally, we need to handle the second derivatives.  Again, we fix a small scale $r > 0$ and consider $v(x) = r(e(rx) - e(0))$ so that $v$ solves 
\[
D_i(\tilde a^{ij} D_j v) = \tilde f(x). 
\]
We can re-write this in non-divergence form as 
\[
\tilde a^{ij} D_i D_j v + r (D_i a^{ij})(rx) D_j v = r^3 f(rx). 
\]
The Schauder estimates now imply that 
\[
\|v\|_{C^{2,\alpha}(A')} \le C (\|v\|_{L^\infty(A)} + \|\tilde f\|_{C^{1}(A)}). 
\]
Recall that 
\[
f(x) = -(D_ia^{ij}) \frac{x_j}{\vert x\vert^3} + a^{ij} \left[\frac{3x_ix_j}{\vert x\vert^5} - \frac{\delta_{ij}}{\vert x\vert^3}\right]. 
\]
Therefore, we have 
\begin{align*}
D_k f(x) &= -(D_i D_k a^{ij}) \frac{x_j}{\vert x\vert^3} + (D_i a^{ij})\left[\frac{3x_j x_k}{\vert x\vert^5} - \frac{\delta_{jk}}{\vert x\vert^3}\right] + (D_k a^{ij}) \left[\frac{3x_ix_j}{\vert x\vert^5} - \frac{\delta_{ij}}{\vert x\vert^3}\right] \\
&\qquad + a^{ij} \left[\frac{3(\delta_{ki}x_j + \delta_{kj}x_i + \delta_{ij}x_k)}{\vert x\vert^5} - \frac{15x_ix_jx_k}{\vert x\vert^7}\right].
\end{align*}
Since $a^{ij}$ is $C^2$, it follows that $\vert D_k f(x)\vert = O(\vert x\vert^{-2})$. Hence for $x,y\in A$ we have 
\begin{align*}
\vert D_k \tilde f(x)\vert = r^4 \vert (D_k f)(rx) \vert \le C r^2.
\end{align*} 
Then the Schauder estimate gives $\vert \del^2 v(x)\vert \le C r^{2-\tau}$ whenever $\vert x\vert = 1$, and returning to the original scale this implies $\vert \del^2 e(x)\vert \le C \vert x\vert^{-1-\tau}$ whenever $\vert x\vert = r$. Since $r$ could be any small number, this completes the proof. 
\end{proof} 

Next we discuss the case when $g$ has higher regularity. 

\begin{theorem}
\label{Theorem:GreenSmooth}
Let $B$ be the unit ball in $\R^3$.  Let $g$ be a $C^3$ metric on $B$ which satisfies $g_{ij}(0) = \delta_{ij}$ and $\bd_k g_{ij}(0) = 0$. 
Then there is a unique solution $\Gamma$ to 
\[
\begin{cases}
\lap_g \Gamma = -4\pi \delta_0, &\text{in B}\\
\Gamma = 0, &\text{on } \bd B. 
\end{cases}
\]
Write $\Gamma(x) = \frac 1 {\vert x\vert} + e(x)$.  Then there is a constant $C$ such that the function $e$ satisfies the estimates 
\begin{gather*}
\vert e(x) - e(0)\vert \le C \vert x\vert,\\
\vert \grad e(x)\vert \le C, \\
\vert \del^2 e(x)\vert \le C \vert x\vert^{-1}
\end{gather*} 
as $\vert x\vert \to 0$. 
\end{theorem}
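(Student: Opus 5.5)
The plan is to follow the proof of Theorem \ref{Theorem:Green} step by step, using the extra derivative of $g$ to remove every $\tau$-loss. As before, write $\lap_g = |g|^{-1/2}L$ with $L u = D_i(a^{ij}D_j u)$ and $a^{ij} = g^{ij}\sqrt{|g|}$. Now $a^{ij}$ is $C^3$ with $a^{ij}(0)=\delta_{ij}$ and $\bd_k a^{ij}(0)=0$. Existence and uniqueness of $\Gamma$ come from \cite{gruter1982green} exactly as before. The same integration by parts shows that $e=\Gamma-|x|^{-1}$ solves $Le=f$ weakly, where
\[
f = -(D_ia^{ij})\frac{x_j}{|x|^3} + (a^{ij}-\delta^{ij})\Big[\frac{3x_ix_j}{|x|^5}-\frac{\delta_{ij}}{|x|^3}\Big].
\]

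The key new input is a finer structure for $f$. Taylor expand $D_ia^{ij}(x) = \bd_k D_i a^{ij}(0)x_k + O(|x|^2)$ and $a^{ij}-\delta^{ij} = \frac12 \bd_k\bd_l a^{ij}(0)x_kx_l + O(|x|^3)$; this uses $C^3$ regularity. Substituting gives $f = f_{-1} + f_0$. Here $f_{-1}$ is homogeneous of degree $-1$, smooth on $\R^3\setminus\{0\}$, and a quadratic-over-$|x|^3$ expression. The remainder satisfies $|f_0|\le C$ and $|\grad f_0|\le C|x|^{-1}$; the gradient bound follows from $a\in C^3$ by the same differentiation as in the earlier proof.

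Next I construct explicitly a function $e_1$, homogeneous of degree $1$, with $\lap e_1 = f_{-1}$. Since $f_{-1}$ is $|x|^{-3}$ times a quadratic polynomial, I look for $e_1 = P(x)/|x| + c|x|$ with $P$ a quadratic polynomial. Harmonic decomposition of the quadratic into degree-0 and degree-2 spherical harmonic parts gives this solvability: the degree $1$ homogeneous solution exists because $1$ is not an exceptional degree for the degree-$0$ and degree-$2$ harmonic parts. Any such $e_1$ satisfies $e_1(0)=0$, $|e_1|\le C|x|$, $|\grad e_1|\le C$, and $|\del^2 e_1|\le C|x|^{-1}$. Then $e_2 = e - e_1$ solves $Le_2 = f_0 - (L-\lap)e_1$. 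Since $(a-\delta)=O(|x|^2)$ and $\grad a=O(|x|)$, the term $(L-\lap)e_1$ is $O(|x|)$ with gradient $O(1)$. Hence the right-hand side $h$ is bounded with $|\grad h|\le C|x|^{-1}$. In particular $h\in L^p$ for all $p<\infty$, so $e_2\in W^{2,p}$ for all $p$, and therefore $e_2\in C^{1,\alpha}$. This gives $|e_2(x)-e_2(0)|\le C|x|$ and $|\grad e_2|\le C$.

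For the second derivatives I rescale as before: set $v(x)=r^{-1}\big(e_2(rx)-e_2(0)-r x\cdot\grad e_2(0)\big)$ on $A=B_3-B_{1/3}$. It satisfies $\tilde a^{ij}D_{ij}v + r(D_ia^{ij})(rx)D_jv = r\,h(rx) - (\text{lower-order terms from the linear part})$, and all of these are bounded in $C^{0,\alpha}(A)$ by $Cr$, using $|\grad h|\le C|x|^{-1}$. The $C^{1,\alpha}$ bound gives $\|v\|_{L^\infty(A)}\le Cr^{\alpha}$, and $\|v\|_{L^\infty(A)}\le Cr$ once $e_2\in C^{1,1}$. So it is cleaner to apply the Schauder estimate to the difference quotient. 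Schauder gives $|\del^2 v|\le C(\|v\|_{L^\infty}+r)$ on $|x|=1$, which translates into $|\del^2 e_2(x)|\le C|x|^{-1}$ as long as $\|v\|_{L^\infty(A)}\le C$. That bound follows from $|\grad e_2|\le C$ alone, since $|e_2(rx)-e_2(0)-rx\cdot\grad e_2(0)|\le Cr$. Adding back $e_1$ gives all three estimates for $e$.

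The main obstacle is the first step: showing that the leading singular part of $Le$ is exactly homogeneous of degree $-1$ and can be inverted by a degree-$1$ homogeneous function. This is precisely what the $\tau$-loss in Theorem \ref{Theorem:Green} came from, since $|x|^{-1}\in L^p$ only for $p<3$. If the explicit inversion is awkward, the fallback is to invoke the standard fact that $\lap w = F$ with $F$ homogeneous of degree $-1$ and smooth on the sphere has a solution homogeneous of degree $1$. This holds because degree $1$ avoids the indicial roots, which are integers $k$ only for the degree-$k$ harmonic component, and here only components of degree $0$ and $2$ occur.
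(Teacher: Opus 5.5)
This is essentially the paper's parametrix argument. The paper also subtracts a degree-one homogeneous function $\phi=|x|\psi(x/|x|)$ with $(\lap_{S^2}+2)\psi=F$, observes that the remainder satisfies $L(\cdot)=O(1)$ and is therefore $C^{1,\alpha}$ (hence $e$ is Lipschitz), and gets the Hessian bound from the same rescaled Schauder estimate. One correction: $f_{-1}$ is not $|x|^{-3}$ times a quadratic polynomial, because the term $\frac12\bd_k\bd_l a^{ij}(0)x_kx_l\cdot 3x_ix_j|x|^{-5}$ gives a quartic angular profile that in general has degree-$4$ harmonic components; so your explicit ansatz $P/|x|+c|x|$ is too small, and the claim that only degrees $0$ and $2$ occur is wrong. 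What actually rules out the degree-$1$ resonance is that the angular profile is even, which is exactly the observation the paper uses, so your fallback argument goes through.
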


\begin{proof}
    We use a parametrix. Again consider the operator $Lu = D_i(a^{ij} D_j u)$ where $a^{ij} = g^{ij}\sqrt{\vert g\vert}$. We know the existence of a Green's function $\Gamma$ for this operator.
    Write $a^{ij} = \delta^{ij} + b^{ij}$ so that $b^{ij}$ are $C^3$ with $b^{ij}(0) = 0$ and $\bd_k b^{ij}(0) = 0$.  As before, we compute 
    \begin{align*}
    f(x) &:= D_i(a^{ij} D_j(\vert x\vert^{-1})) \\
    & = -(D_i b^{ij}) \frac{x_j}{\vert x\vert^3} + b^{ij}\left[ \frac{3x_ix_j}{\vert x\vert^5}-\frac{\delta_{ij}}{\vert x\vert^3}\right ]\\
    &= -D_{ki}b^{ij}(0) \frac{x_kx_j}{\vert x\vert^3} + \frac 1 2 D_{k\ell } b^{ij}(0) x_k x_\ell\left[\frac{3x_i x_j}{\vert x\vert^5} - \frac{\delta_{ij}}{\vert x\vert^3}\right ] + O(1). 
    \end{align*} 
    In particular, we can write 
    \[
    f(x) = \frac{F(\theta)}{\vert x\vert} + O(1)
    \]
    where $\theta = (\theta_1,\theta_2,\theta_3) = (\frac{x_1}{\vert x_1\vert}, \frac{x_2}{\vert x_2\vert}, \frac{x_3}{\vert x_3\vert})$ denotes a point on the unit sphere and 
    \[
    F(\theta) = -D_{ki}b^{ij}(0) \theta_k \theta_j + \frac 1 2 D_{k\ell}b^{ij}(0) \theta_k \theta_\ell (\theta_i\theta_j - \delta_{ij}). 
    \]
    Now observe that $F$ is an even function of $\theta$. In particular, it is orthogonal to all the $\lambda = 2$ eigenmodes on $S^2$.

    Therefore, we can find a function $\psi$ on $S^2$ so that $(\lap_{S^2} + 2)\psi = F$.  Define $\phi(x) = x \psi(\frac{x}{\vert x\vert})$. We have bounds $\phi = O(\vert x\vert)$ and $D_i \phi(x) = O(1)$ and $D_{ij}\phi(x) = O(\vert x\vert^{-1})$. We now compute that 
    \begin{align*}
       L\phi = D_i(a^{ij} D_j \phi) = D_i(a^{ij}) D_j \phi + a^{ij}D_{ij}\phi = \delta^{ij} D_{ij}\phi + O(\vert x\vert). 
    \end{align*}
The Euclidean Laplacian in polar coordinates is 
\[
\delta^{ij} D_{ij}\phi = \frac{(\lap_{S^2} + 2)\psi(\theta)}{\vert x\vert}  = \frac{F(\theta)}{\vert x\vert}. 
\]
Hence we have 
\[
L\left(\frac{1}{\vert x\vert} - \phi(x) - \Gamma(x)\right) = O(1). 
\]
Since $1 \in L^\infty$, elliptic estimates now imply that 
\[
\frac{1}{\vert x\vert}-\phi(x) - \Gamma(x)
\]
is in $W^{2,p}$ for every $p < \infty$. In particular, it is also $C^{1,\alpha}$ for some $\alpha$. 

Now define $e(x) = \Gamma(x) - \frac{1}{\vert x\vert}$. Then $e(x) - \phi(x)$ is $C^{1,\alpha}$. Moreover, it is clear from the definition of $\phi$ that $\phi$ is Lipschitz. Hence it follows that $e$ is also Lipschitz. In particular, we have 
\[
\vert e(x) - e(0)\vert \le C\vert x\vert. 
\]
The derivative estimates now follow as in the previous theorem. 
\end{proof}

\section{Coordinate Changes for \texorpdfstring{$C^2$}{C2} Metrics}
\label{Appendix:ChangeCoordinates}

The goal of this appendix is to show that if $g$ is a $C^2$ metric then we can always find  coordinates in the neighborhood of a point where $g_{ij}$ is $C^2$ and $g_{ij}(0) = \delta_{ij}$ and $\bd_k g_{ij}(0) = 0$.  Unlike for smooth metrics, one cannot simply use geodesic normal coordinates because the Christoffel symbols in a $C^2$ metric are only $C^1$ and therefore solving the geodesic equation involves some loss of regularity. 

\begin{prop}
    Assume that $g$ is a $C^2$ metric on the unit ball $B$ in $\R^3$. Then for any $k$ there is a $C^k$  diffeomorphism $\phi\colon U\to V$ between two neighborhoods $U$ and $V$ of the origin such that $\phi^* g$ has $(\phi^* g)_{ij}(0) = \delta_{ij}$ and $\partial_k (\phi^* g)_{ij}(0) = 0$. 
\end{prop}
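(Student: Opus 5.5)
The plan is to build $\phi$ as the composition of a linear map and a polynomial (quadratic) change of coordinates, both of which are $C^\infty$. Then $\phi^*g$ stays $C^2$, and we can take any $k$. This sidesteps geodesic normal coordinates entirely.

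\emph{Step one.} Since $g_{ij}(0)$ is symmetric positive definite, there is an invertible matrix $P$ with $P^T g(0) P = I$. Pulling back by the linear map $x\mapsto Px$ produces a $C^2$ metric, still denoted $g$, with $g_{ij}(0)=\delta_{ij}$.

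\emph{Step two.} Set $\Gamma^k_{ij} = \Gamma^k_{ij}(0)$; these are well defined because $g$ is $C^1$. Since $\Gamma^k_{ij}$ is symmetric in $i,j$, we can define the quadratic map
\[
\phi(y)^k = y^k - \tfrac 12 \Gamma^k_{ij} y^i y^j.
\]
This is a polynomial with $D\phi(0)=I$. By the inverse function theorem it is a $C^\infty$ diffeomorphism between small neighborhoods $U$ and $V$ of the origin.

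\emph{Step three.} Compute the pullback:
\[
(\phi^*g)_{ab}(y) = g_{kl}(\phi(y))\,\partial_a\phi^k\,\partial_b\phi^l, \qquad \partial_a\phi^k = \delta^k_a - \Gamma^k_{ai}y^i.
\]
This is a product of $C^2$ functions, so $\phi^*g$ is $C^2$. At $y=0$ it equals $g_{ab}(0)=\delta_{ab}$. Differentiating at $0$ gives
\[
\partial_c(\phi^*g)_{ab}(0) = \partial_c g_{ab}(0) - g_{kb}(0)\Gamma^k_{ac} - g_{ak}(0)\Gamma^k_{bc} = \partial_c g_{ab}(0) - \Gamma_{b,ac} - \Gamma_{a,bc},
\]
where the indices are lowered with $g(0)=I$. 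The standard identity $\Gamma_{b,ac}+\Gamma_{a,bc} = \partial_c g_{ab}$ then shows the derivative vanishes.

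There is no real obstacle. The only point needing care is the bookkeeping in Step three: checking that the sign and factor $\tfrac12$ in $\phi$ give exactly this cancellation. I would verify that by expanding $\Gamma_{b,ac} = \tfrac12(\partial_a g_{bc} + \partial_c g_{ab} - \partial_b g_{ac})$ and the analogous $\Gamma_{a,bc}$.

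It is also worth recording why no regularity is lost. The map $\phi$ is smooth, so pulling back costs nothing. The construction only needs $g$ to be $C^1$ at the origin to define the constants $\Gamma^k_{ij}(0)$. This is unlike the geodesic construction, which requires solving an ODE with coefficients that are only $C^1$.
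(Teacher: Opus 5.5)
Your proposal is correct and follows the paper's proof: the same linear normalization, followed by the same quadratic change of coordinates $x^k = y^k - \frac12\Gamma^k_{ij}(0)y^iy^j$, made into a local diffeomorphism by the inverse function theorem. The only difference is in the verification. You differentiate the pullback directly and use $\Gamma_{b,ac}+\Gamma_{a,bc}=\partial_c g_{ab}$, whereas the paper uses the transformation law to show $\widetilde\Gamma^k_{ij}(0)=0$ and then recovers $\partial_k\tilde g_{ij}(0)=0$ from $\tilde g_{ij}(0)=\delta_{ij}$.
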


\begin{proof}
Let $x^k$ be a coordinate system in a neighborhood of $p$ in which the components $g_{ij}$ are $C^2$.  By a preliminary linear change of coordinates, we can assume that $x^i$ is chosen so that $g_{ij}(0) = \delta_{ij}$.  We now want to perform a quadratic change of coordinates to make the derivatives of the metric equal to 0 at the origin as well.  Let $\Gamma_{ij}^k$ be the Christoffel symbols in the $x$ coordinates. 

We define a new coordinate system $y^k$ via
\[
x^k = y^k - \frac 1 2 \Gamma_{ij}^k(0) y^i y^j. 
\]
In other words, define a map $F\colon \R^3_y \to \R^3_x$ by setting 
\[
F(y^1,y^2,y^3) =  \left( y^1 - \frac 1 2 \Gamma^1_{ij}(0)y^i y^j,  y^2 - \frac 1 2 \Gamma^2_{ij}(0)y^i y^j,  y^3 - \frac 1 2 \Gamma^3_{ij}(0) y^i y^j\right).
\]
It is immediate that $F$ is smooth and that $DF_0$ is the identity. Therefore, for any fixed $k$, the inverse function theorem implies that there are neighborhoods $U$ and $V$ of the origin for which  $F\colon U\to V$ is a $C^k$ diffeomorphism.  

Let $\widetilde \Gamma^{k}_{ij}$ be the Christoffel symbols in the $y$ coordinates. The Christoffel symbols transform according to the following rule (c.f. \cite[Exercise 2.5.21]{petersen2006riemannian}):
\begin{align*}
\widetilde \Gamma_{ij}^k = \frac{\bd^2 x^s}{\bd y^i \bd y^j} \frac{\bd y^k}{\bd x^s} + \frac{\bd x^s}{\bd y^i} \frac{\bd x^t}{\bd y^j} \frac{\bd y^k}{\bd x^\ell} \Gamma^\ell_{st}.
\end{align*} 
We evaluate this at the origin to get 
\begin{align*}
\widetilde \Gamma_{ij}^k(0) &= \frac{\bd^2 x^s}{\bd y^i \bd y^j} \frac{\bd y^k}{\bd x^s} + \frac{\bd x^s}{\bd y^i} \frac{\bd x^t}{\bd y^j} \frac{\bd y^k}{\bd x^\ell} \Gamma^\ell_{st}\\
&= -\Gamma^s_{ij}(0) \delta^k_s + \delta^s_i \delta^t_j \delta^k_\ell \Gamma^\ell_{st}(0) \\
&= - \Gamma^k_{ij}(0) + \Gamma^k_{ij}(0) = 0,
\end{align*} 
where we used the fact that the Christoffel symbols are symmetric. 

Let $\tilde g_{ij}$ be the components of the metric in the $y$-coordinates.  Then $\tilde g_{ij}$ is $C^2$ and $\tilde g_{ij}(0) = \delta_{ij}$. We claim that the first derivatives of $\tilde g_{ij}$ at the origin vanish as well. Since $\tilde g_{ij}(0) = \delta_{ij}$, the Christoffel symbols are given by 
\[
\widetilde \Gamma_{ij}^k (0) = \frac{1}{2}\left(\frac{\bd \tilde g_{jk}}{\bd y^i}(0) + \frac{\bd \tilde g_{ik}}{\bd y^j}(0) - \frac{\bd \tilde g_{ij}}{\bd y^k}(0)\right).
\]
Hence we obtain 
\[
\frac{1}{2}\left(\frac{\bd \tilde g_{jk}}{\bd y^i}(0) + \frac{\bd \tilde g_{ik}}{\bd y^j}(0) - \frac{\bd \tilde g_{ij}}{\bd y^k}(0)\right) = 0. 
\]
Since this is true for all choices of the indices, we also have 
\[
\frac{1}{2}\left(\frac{\bd \tilde g_{kj}}{\bd y^i}(0) + \frac{\bd \tilde g_{ij}}{\bd y^k}(0) - \frac{\bd \tilde g_{ik}}{\bd y^j}(0)\right) = 0. 
\]
Adding this pair of equations, we deduce that 
\[
\frac{\bd \tilde g_{kj}}{\bd y^i}(0) = 0,
\]
and since $i,j,k$ were arbitrary the result follows. 
\end{proof}

\bibliographystyle{plain}
\bibliography{bibliography}

\end{document}